\newtheorem{theorem}{Theorem}[section]
\newtheorem{corollary}[theorem]{Corollary}
\newtheorem{definition}[theorem]{Definition}
\newtheorem{example}[theorem]{Example}
\newtheorem{lemma}[theorem]{Lemma}
\newtheorem{notation}[theorem]{Notation}
\newtheorem{proposition}[theorem]{Proposition}
\newtheorem{question}[theorem]{Question}
\newtheorem{remark}[theorem]{Remark}
\newcommand{\Z}{\mathbb Z}
\newcommand{\N}{\mathbb N}
\newcommand{\R}{\mathbb R}
\DeclareMathOperator{\vm}{vm}
\DeclareMathOperator{\ap}{Ap}
\DeclareMathOperator{\len}{len}
\DeclareMathOperator{\depth}{depth}
\DeclareMathOperator{\wt}{wt}
\newcommand{\vs}[1]{\langle #1 \rangle}
\begin{document}

\title{A graph-theoretic approach to Wilf's conjecture}

\author{Shalom Eliahou}
\date{}

\maketitle

\begin{abstract}
Let $S \subseteq \N$ be a numerical semigroup with multiplicity $m = \min(S \setminus \{0\})$ and conductor $c=\max(\N \setminus S)+1$. Let $P$ be the set of primitive elements of $S$, and let $L$ be the set of elements of $S$ which are smaller than $c$. A longstanding open question by Wilf in 1978 asks whether the inequality $|P||L| \ge c$ always holds. Among many partial results, Wilf's conjecture has been shown to hold in case $|P| \ge m/2$ by Sammartano in 2012. Using graph theory in an essential way, we extend the verification of Wilf's conjecture to the case $|P| \ge m/3$. This case covers more than $99.999\%$ of numerical semigroups of genus $g \le 45$.
\end{abstract}

\begin{quote} \textit{Keywords and phrases.} Numerical semigroup; Apéry set; loopy graph; vertex-maximal matching; normality number; downset.
\end{quote}

\section{Introduction}\label{section intro}

Denote $\N=\{0,1,2,3,\dots\}$ and $\N_+=\N\setminus \{0\}=\{1,2,3,\dots\}$. For $a,b \in \Z$, let $[a,b[=\{z \in \Z \mid a \le z < b\}$ and $[a,\infty[=\{z \in \Z \mid a \le z\}$ denote the integer intervals they span. A \emph{numerical semigroup} is a subset $S \subseteq \N$ containing $0$, stable under addition and with finite complement in $\N$. Equivalently, it is a subset $S \subseteq \N$ of the form $S = \vs{a_1,\dots,a_n}=\N a_1 + \dots + \N a_n$ where $\gcd(a_1,\dots,a_n)=1$. The set $\{a_1,\dots,a_n\}$ is then called a \emph{system of generators} of $S$, and the smallest such $n$ is called the \emph{embedding dimension} of $S$.

For a numerical semigroup $S$, its \emph{gaps} are the elements of $\N \setminus S$, its \emph{genus} is $g=|\N \setminus S|$, its \emph{multiplicity} is $m = \min S^*$ where $S^*= S \setminus \{0\}$, its \emph{Frobenius number} is $f = \max \Z\setminus S$ and its \emph{conductor} is $c=f+1$. Thus $[c,\infty[ \, \subseteq S$ and $c$ is minimal for this property. As in~\cite{E}, we denote $L=S \cap [0,c[$. 

\smallskip
We partition $S^*$ as $S^*=P \sqcup D$, where $D=S^*+S^*=\{x+y \mid x,y \in S^*\}$ is the set of \emph{decomposable} elements of $S^*$, and $P= S^* \setminus D$ is the set of \emph{primitive elements} of $S^*$. As easily seen, $P$ is finite since $P \subseteq [m,c+m[$. Moreover $S = \vs{P}$ since every element of $S^*$ is a sum of primitive elements, and $P$ is the unique \emph{minimal system of generators} of $S$. Thus $|P|$ equals the embedding dimension of $S$. 

\smallskip
In 1978 Wilf asked, in equivalent terms, whether the inequality 
\begin{equation}\label{wilf conjecture}
|P||L| \ge c
\end{equation}
always holds~\cite{W}. Wilf's conjecture, as it is now known, has been verified in several cases, including when $|P| \le 3$, or $c \le 3m$, or $m \le 18$, or $|L| \le 12$, or $|P| \ge m/2$. See Delgado~\cite{D2} for an extensive recent survey of partial results on Wilf's conjecture, and \cite{Br08, BGOW, D, DM, E, EF, FGH, FH, K1, MS, Sa, Se, Sy, W} for some relevant papers. The verification in case $|P| \ge m/2$ is due to Sammartano~\cite{Sa} in 2012. Our purpose in this paper is to extend it to the case $|P| \ge m/3$.

\begin{theorem}\label{thm main} Let $S$ be a numerical semigroup with multiplicity $m$ and minimal generating set $P$. If $|P| \ge m/3$ then $S$ satisfies Wilf's conjecture.
\end{theorem}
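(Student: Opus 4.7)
\medskip
\noindent\textbf{Proof plan.}
The plan is to recast Wilf's inequality $|P||L|\ge c$ in Apéry-set terms and then extract enough elements of $L$ from the primitive structure via a graph-theoretic argument. Let $W=\ap(S,m)$, so $|W|=m$ with one element per residue class modulo $m$, and $c=\max(W)-m+1$. Every $s\in L$ writes uniquely as $qm+w$ with $w\in W$, $q\ge 0$ and $qm+w<c$, so $|L|=\sum_{w\in W,\,w<c}\lceil(c-w)/m\rceil$. The primitives other than $m$ form a subset $P'=P\setminus\{m\}\subseteq W$, contributing $|P|$ elements to $L$ directly; every remaining element of $W$ is a sum of at least two elements of $P'$, possibly corrected by a multiple of $m$. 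Under $|P|\ge m/3$, roughly one third of the residue classes are already represented by primitives, and the rest demand decompositions.

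The core construction would be a loopy graph $G$ on vertex set $P'$, in which $\{p,q\}$ is an edge (a loop when $p=q$) exactly when $p+q\in W$, i.e.\ $p+q$ is minimal in its residue class modulo $m$. A vertex-maximal matching $M$ in $G$, meaning one covering the largest possible number of vertices, then yields a family of decomposable Apéry elements $p+q$ indexed by $M$, each living in a different part of $W$ and contributing $\lceil(c-(p+q))/m\rceil$ further shifts to $L$. The first technical step is to bound $|M|$ from below in terms of $|P'|\ge m/3-1$, and to bound the sums $p+q$ so that their combined contribution to $|L|$ is of order $m$.

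A complementary accounting handles the Apéry elements not captured by an edge of $M$, using the partial order $u\le_S v\Leftrightarrow v-u\in S$ on $W$ and the downset generated by $P'$. The \emph{normality number} of $S$ — the least $k$ for which every $w\in W$ is a sum of at most $k$ elements of $P'$ — measures how slowly this downset reaches all of $W$, and it can be used to bound $\max(W)$ and hence $c$ in the regime where the graph $G$ is sparse. Combining the matching lower bound on $|L|$ with this control on $c$ should deliver $|P||L|\ge c$ uniformly in the range $|P|\ge m/3$.

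The main obstacle will be the tight regime $|P|\approx m/3$. Sammartano's argument for $|P|\ge m/2$ exploits the fact that primitives already fill over half of the residue classes, leaving no room for large gaps in $L$; at the threshold $m/3$ one must instead extract a ``bonus'' third of $W$ from the graph $G$, and a priori most pairwise sums of primitives may escape $W$ (landing deeper in $W+m\N$), making $G$ sparse. Showing that such sparsity itself forces $c$ to be small — equivalently, that every vertex-maximal matching in $G$ is either large, or else its uncovered vertices occupy tightly constrained positions in $W$ — is where I expect the bulk of the technical work to lie, and where the graph-theoretic formulation pays off over purely additive counting.
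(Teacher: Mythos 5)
What you have written is a strategy outline rather than a proof, and the pieces that would carry the quantitative load are missing or pointed in the wrong direction. The paper's engine is the depth function $\delta(x)$ (the integer with $x+\delta(x)m\in[c,c+m[$) together with the additivity bound $\delta(x)+\delta(y)\ge\delta(x+y)+q-\min(\rho,1)$, where $q=\lceil c/m\rceil$ and $\rho=qm-c$. This is what makes an edge $\{x,y\}$ (meaning $x+y$ is again a nonzero Apéry element) valuable: it forces $\delta(x)+\delta(y)\ge q-1$, so a matching covering $k$ vertices yields $|L|=q+\tau(X)\ge q+k(q-1)/2+\cdots$. Your accounting instead credits the sums $p+q$ with $\lceil(c-p-q)/m\rceil$ shifts; that is much weaker, and it ignores the other side of the ledger, namely that each decomposable Apéry element costs you one primitive via $m=|P|+|X\cap D|$, leading to the exact identity $W(S)=|P|\tau(X)-|X\cap D|q+\rho$ on which the whole proof balances. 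You also build the graph on $P\setminus\{m\}$ only; the paper's graph lives on all nonzero Apéry elements, and edges with a decomposable endpoint (e.g.\ $\{x,x^2\}$) are essential in the structural analysis, as is the fact that summands of Apéry elements are Apéry elements (so the vertex set and all neighborhoods are downsets).

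More seriously, your ``first technical step'' --- bounding the matching from below in terms of $|P|\ge m/3$ --- cannot work and is the opposite of what happens. There is no lower bound on the matching number of $G(S)$: maximal-embedding-dimension semigroups have an empty graph. The actual reduction is that if $|P|\ge m/3$ and $|L|\ge 3q$ then Wilf holds trivially, so one may assume $\tau(X)\le 2q-1$, which together with the matching lower bound on $\tau(X)$ forces the vertex-maximal matching number $k$ to be \emph{at most} $4$; the bulk of the proof is then a delicate case analysis over $k=0,1,2,3,4$ (and over the number of loops, of vertices, and of decomposable vertices), using the surjective weight map $E(G)\to X\cap D$ to bound $|X\cap D|\le|E(G)|$ and the weak/normal edge dichotomy to extract lower bounds on $\rho$. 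None of this appears in your plan, and your ``normality number'' (least $k$ with every Apéry element a sum of $\le k$ primitives) is a different invariant from the one the paper uses (the maximum number of matching vertices covered by normal, i.e.\ non-weak, edges). As it stands the proposal identifies the right objects to look at but leaves the entire argument to be found.
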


This result was first presented in 2017 at a conference in Ume\r{a}~\cite{E2}. The present proof is a streamlined version of the original unpublished one. 

As later noted by Manuel Delgado, who attended the Ume\r{a} conference, an overwhelming majority of numerical semigroups satisfies the condition of Theorem~\ref{thm main}. Specifically, among all $23\,022\,228\,615$ numerical semigroups of genus $g \le 45$, the proportion of those satisfying $|P| \ge m/3$ exceeds $99.999\%$. In addition, Delgado discovered that the condition of Theorem~\ref{thm main} is well suited to efficiently trim the tree of numerical semigroups while probing certain open problems concerning them~\cite{D3}. In particular, this will lead to significant advances on the verification of Wilf's conjecture by computer. While the first such major effort reached genus $g = 50$~\cite{Br08}, and the current published verification record stands at genus $g = 60$~\cite{FH}, Delgado and Fromentin \emph{have now verified Wilf's conjecture up to genus $g = 80$}, and aim to reach genus $g = 100$ before publishing their result~\cite{DF}.

\subsection{Contents}

In Section~\ref{section depth}, we introduce the depth and total depth functions on a numerical semigroup. In Section~\ref{section graph}, we construct a map $S \mapsto G(S)$ associating to every numerical semigroup $S$ a finite graph $G(S)$ whose properties play a key role in this paper. Those properties, combining algebra and graph theory, are developed in Section~\ref{section properties}. Section~\ref{section main} is devoted to proving Theorem~\ref{thm main}. In the last Section~\ref{section equivalence}, we take a closer look at the map $S \mapsto G(S)$ by considering its range and fibers.

\section{The depth functions $\delta$ and $\tau$}\label{section depth}

Throughout this section, let $S \subseteq \N$ be a numerical semigroup with multiplicity $m$ and conductor $c$. 

\begin{definition}
The \emph{depth} of $S$ is the integer $q = \lceil c/m \rceil$. We denote it by $\depth(S)$. 
\end{definition}

See also~\cite{EF2}. More generally, we define the \emph{depth function} $\delta \colon S \to \Z$ on $S$ as follows.

\begin{definition} For all $x \in S$, let $\delta(x) \in \Z$ denote the unique integer such that 
$$
x + \delta(x)m \ \in \ [c,c+m[.
$$
We call $\delta(x)$ the \emph{depth} of $x$.
\end{definition}
For instance, assuming $S \not=\N$, the elements of $[c,c+m[$ have depth $0$, those in $[c+m, \infty[$ have negative depth while those in $S \cap [0,c[$ have positive depth. The largest depth in $S$ is attained by $0$, namely $\delta(0) = \depth(S) = \lceil c/m \rceil$. 
\begin{notation}\label{S_i} Let $q = \depth(S)=\lceil c/m \rceil$. We set $\rho=cm-q$. Thus $\rho \in [0,m[$ and $c=qm-\rho$.
\end{notation}

As in~\cite{E}, we denote
\begin{equation}\label{S_i with rho}
S_i = S \cap [im-\rho, im+m-\rho[
\end{equation}
for all $i \ge 0$. This yields the partition $S = \bigsqcup_{i \ge 0} S_i$. In particular, we have $S_0=\{0\}$, $m \in S_1$ and $c \in S_q$. More generally, we have
\begin{equation}\label{S_i with delta}
S_i = \{x \in S \mid \delta(x)=q-i\}
\end{equation}
as easily verified. Note also the equality
\begin{equation}\label{subset L}
L=S_0 \sqcup S_1 \sqcup \dots \sqcup S_{q-1}.
\end{equation}
The following was shown in~\cite{E}. Its verification is straightforward.

\begin{proposition}  Let $S$ be a numerical semigroup. For all $0 \le i \le j$ such that $j \ge 1$, we have
\begin{equation}\label{addition rule}
S_i+S_j \subset S_{i+j-1} \sqcup S_{i+j} \sqcup S_{i+j+1}.
\end{equation}
Moreover, if $\rho=0$ then
\begin{equation}\label{rho=0}
S_i+S_j \subseteq S_{i+j} \sqcup S_{i+j+1}. 
\end{equation}
\end{proposition}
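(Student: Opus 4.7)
The plan is to unfold the definition of $S_k$ and rely on straightforward interval arithmetic. Given $x \in S_i$ and $y \in S_j$ with $0 \le i \le j$ and $j \ge 1$, one has $x+y \in S$ by closure, so the task reduces to identifying which interval $[km-\rho, (k+1)m-\rho[$ contains $x+y$.

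Adding the defining bounds $im - \rho \le x < im + m - \rho$ and $jm - \rho \le y < jm + m - \rho$ yields
$$(i+j)m - 2\rho \ \le \ x+y \ < \ (i+j)m + 2m - 2\rho.$$
I would then exploit $0 \le \rho < m$: the lower bound gives $x+y \ge (i+j-1)m - \rho$ (equivalent to $m \ge \rho$), while the upper bound gives $x+y < (i+j+2)m - \rho$ (equivalent to $\rho \ge 0$). Together these two inequalities pin $x+y$ inside the union $S_{i+j-1} \sqcup S_{i+j} \sqcup S_{i+j+1}$, proving the first assertion. The hypothesis $j \ge 1$ is used only to guarantee that the index $i+j-1$ is nonnegative.

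For the refinement when $\rho = 0$, the lower bound of $x+y$ sharpens to $(i+j)m$, which is exactly the left endpoint of the interval defining $S_{i+j}$. This immediately rules out $S_{i+j-1}$ and leaves $x+y \in S_{i+j} \sqcup S_{i+j+1}$, as required.

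I do not expect any substantive obstacle: the argument is essentially careful bookkeeping of endpoints. The only subtle point is to notice that the containment in the general case cannot be sharpened when $\rho > 0$, because the endpoint $(i+j)m - 2\rho$ then falls strictly inside the $S_{i+j-1}$ interval, whereas when $\rho=0$ it lands exactly on the boundary between $S_{i+j-1}$ and $S_{i+j}$, enabling the stronger conclusion.
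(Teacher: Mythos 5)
Your proof is correct and is exactly the straightforward interval-arithmetic verification that the paper alludes to (it omits the argument, citing \cite{E} and calling it ``straightforward''): adding the defining bounds for $S_i$ and $S_j$ and using $0 \le \rho < m$ places $x+y$ in the union of the three relevant intervals, and $\rho = 0$ kills the leftmost one. No gaps.
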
 

\smallskip
These set addition properties may be translated in terms of the depth function $\delta$ as follows. The rightmost inequality will be used throughout the paper.

\begin{proposition}\label{delta x y} Let $S$ be a numerical semigroup of depth $q \ge 1$. For all $x,y \in S$, we have
\begin{equation}\label{delta addition}
\delta(x+y)+q+1 \ \ge \ \delta(x)+\delta(y) \ \ge \ \delta(x+y)+q-\min(\rho,1).
\end{equation}
\end{proposition}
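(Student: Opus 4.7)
The plan is to translate the set-addition rules for the partition $S = \bigsqcup_{i \ge 0} S_i$, namely formulas (\ref{addition rule}) and (\ref{rho=0}), directly into the language of the depth function $\delta$. The bridge is identity (\ref{S_i with delta}), which says $x \in S_i$ if and only if $\delta(x) = q-i$. Therefore, writing $x \in S_i$, $y \in S_j$ and $x+y \in S_k$, one has
$$\delta(x) + \delta(y) - \delta(x+y) \;=\; q - (i+j-k),$$
so the double inequality (\ref{delta addition}) is equivalent to the single inclusion $k - (i+j) \in [-1,\min(\rho,1)]$.

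First I would dispose of the degenerate case where $x = 0$ or $y = 0$ (which is necessary because (\ref{addition rule}) is stated only for $j \ge 1$). In that situation $x+y$ equals the other summand, so $k = i+j$ and both inequalities hold with room to spare, since $\min(\rho,1) \ge 0$.

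For the main case, both $x$ and $y$ lie in $S^*$, hence $i,j \ge 1$, and (\ref{addition rule}) applies and yields $k \in \{i+j-1,\, i+j,\, i+j+1\}$. This immediately proves the left-hand inequality of (\ref{delta addition}), and also establishes the right-hand one whenever $\rho \ge 1$, where $\min(\rho,1) = 1$. The remaining subcase is $\rho = 0$, where one must invoke the sharper rule (\ref{rho=0}) in place of (\ref{addition rule}): it gives $k \in \{i+j,\, i+j+1\}$, hence $\delta(x)+\delta(y) \ge \delta(x+y) + q$, which is exactly what is needed since $\min(\rho,1) = 0$.

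There is no real obstacle here; the proposition is a mechanical re-indexing of (\ref{addition rule}) and (\ref{rho=0}). The only point requiring care is the reminder that (\ref{addition rule}) imposes $j \ge 1$, which is the sole reason the corner case $x = y = 0$ needs to be handled separately.
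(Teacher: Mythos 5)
Your proof is correct and follows essentially the same route as the paper's: both translate the set-addition rules (\ref{addition rule}) and (\ref{rho=0}) into depth language via the identification $x \in S_i \iff \delta(x) = q - i$, and your explicit treatment of the corner case $x = y = 0$ (where (\ref{addition rule}) does not apply) is a precision the paper elides. One harmless slip: with your notation the double inequality is equivalent to $k - (i+j) \in [-\min(\rho,1),\,1]$, not $[-1,\,\min(\rho,1)]$ as written, but your subsequent derivations go directly from the addition rules to the two inequalities and never use the mis-stated interval, so nothing breaks.
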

\begin{proof} As observed in \eqref{S_i with delta}, for all $x \in S$ we have 
$$x \in S_i \iff \delta(x)=q-i.$$ 
Let $x,y \in S$, and assume $x \in S_i$, $y \in S_j$. Then $\delta(x)=q-i$, $\delta(y)=q-j$, and so $\delta(x)+\delta(y)-q=q-i-j$. The addition properties~\eqref{addition rule} and \eqref{rho=0} now yield
$$
q-i-j-1 \le \delta(x+y) \le q-i-j+\min(\rho,1),
$$
whence
$$
\delta(x)+\delta(y)-q-1 \le \delta(x+y) \le \delta(x)+\delta(y)-q+\min(\rho,1).
$$
This is equivalent to~\eqref{delta addition}, as desired.
\end{proof}

\begin{definition} Let $A \subset S$ be a finite subset. We define the \emph{total depth} of $A$ as
$$
\tau(A) \ = \ \sum_{x \in A} \delta(x).
$$
\end{definition}
In the sequel, we use graph-theoretical tools to estimate the total depth $\delta(X)$ of $X$, the set of nonzero Apéry elements of $S$, as a step towards proving Theorem~\ref{thm main}. The key idea is to exploit \eqref{delta addition} by  forming suitable pairs $\{x,y\}$ of elements of $X$.

\subsection{The number $W(S)$ and Apéry elements}

Let $S \subseteq \N$ be a numerical semigroup of multiplicity $m$ and conductor $c$. As above, we partition $S^*=P \sqcup D$ into primitive and decomposable elements, and we set $L = S \cap [0,c[$. We shall use the following notation from~\cite{E}.

\begin{notation} $W(S)=|P||L|-c$.
\end{notation}
Thus, Wilf's conjecture amounts to state that $W(S) \ge 0$ holds for every numerical semigroup $S$. In this paper, as in~\cite{E}, we focus on estimating $W(S)$ from below. For this purpose, we need the nonzero Apéry elements of $S$. The set $$\ap(S)=\{s \in S \mid s-m \notin S\},$$ 
called the \emph{Apéry set} of $S$\footnote{Or more precisely, the \emph{Apéry set of $S$ with respect to $m$.}}, is central in the theory of numerical semigroups. It has $m$ elements, one in each class mod $m$, actually its least member belonging to $S$. As is well known and easy to see, the smallest and largest elements of $\ap(S)$ are $0$ and $c+m-1$, respectively. The additive properties of $\ap(S)\setminus \{0\}$ play a key role in this paper.

\begin{notation} We denote by $X = \ap(S) \setminus \{0\}$ the set of nonzero Apéry elements.
\end{notation}

\begin{proposition}\label{on X} The following hold.
\begin{itemize} \vspace{-0.15cm}
\item $\delta(x) \ge 0$ for all $x \in X$. \vspace{-0.15cm}
\item $m=|P|+|X \cap D|$. \vspace{-0.15cm} 
\item $|L| \ = \ q + \tau(X)$. 
\end{itemize}
\end{proposition}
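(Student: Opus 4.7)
The plan is to dispatch the three bullets in turn, using only the basic properties of the Ap\'ery set recalled just above and the definition of $\delta$. For the first bullet, I would invoke the stated fact that the largest element of $\ap(S)$ is $c+m-1$. Hence every $x \in X$ satisfies $x \le c+m-1 < c+m$, and by definition of $\delta$, the unique integer $\delta(x)$ with $x + \delta(x) m \in [c, c+m[$ must be nonnegative.

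For the second bullet, I would just count. Since $\ap(S)$ has exactly $m$ elements (one per residue class mod $m$), we have $|X| = m-1$. Using the partition $S^* = P \sqcup D$, we get $|X| = |X \cap P| + |X \cap D|$, so it suffices to show $X \cap P = P \setminus \{m\}$. Clearly $m \notin X$ since $m - m = 0 \in S$. Conversely, for any primitive $p \ne m$, if $p - m$ were in $S$ then either $p - m = 0$ (giving $p = m$, excluded) or $p - m \in S^*$, in which case $p = m + (p-m)$ would contradict the primitivity of $p$. So $p - m \notin S$, i.e.\ $p \in X$. This gives $|X \cap P| = |P|-1$, whence $|X \cap D| = (m-1) - (|P|-1) = m - |P|$.

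For the third bullet, I would decompose $L$ by residue classes modulo $m$. Each such class is represented in $S$ by a unique Ap\'ery element $x$, and the elements of $S$ congruent to $x$ are exactly $x, x+m, x+2m, \ldots$; those in $L$ are the ones strictly less than $c$. From $x + \delta(x) m \in [c, c+m[$ combined with $\delta(x) \ge 0$ (first bullet), the $L$-members in the class of $x$ are precisely $x + km$ for $0 \le k < \delta(x)$, giving exactly $\delta(x)$ of them. Summing over $x \in \ap(S)$ yields
$$
|L| \ = \ \sum_{x \in \ap(S)} \delta(x) \ = \ \delta(0) + \tau(X) \ = \ q + \tau(X),
$$
since $\delta(0) = \depth(S) = q$.

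I do not foresee any real obstacle: each item unpacks directly from the definitions. The only point requiring a moment of care is the bookkeeping in the third bullet, where one must match the half-open interval $[c, c+m[$ containing $x + \delta(x)m$ to the count $\delta(x)$ of $L$-members in the residue class of $x$, avoiding an off-by-one error at either end of the arithmetic progression.
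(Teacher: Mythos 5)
Your proof is correct and follows essentially the same route as the paper: bounding $X$ by $c+m-1$ for the first item, counting $|X|=m-1$ and identifying $X\cap P = P\setminus\{m\}$ for the second, and decomposing $L$ by residue classes mod $m$ with exactly $\delta(x)$ members per class for the third. The only difference is that you spell out the easy verification that every primitive $p\ne m$ lies in $X$, which the paper leaves implicit.
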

\begin{proof} \hspace{0mm}
\begin{itemize} \vspace{-0.15cm}
\item As $\max X = c+m-1$, it follows that $X \subset [m,c+m[$. The conclusion follows from the definition of $\delta$. \vspace{-0.15cm}
\item We have $|X|=|X\cap P|+|X \cap D|$. The definitions imply that $|X|=m-1$ and $P \setminus X=\{m\}$, so $|X \cap P| = |P|-1$. The stated formula follows. \vspace{-0.15cm}
\item Let $a \in L$ be minimal in its class mod $m$. Then either $a=0$ or $a \in X$. Moreover $a+im \in L$ if and only if $i \in [0,\delta(a)[$. Hence
$$
|L \cap (a+m\N)| \ = \ \delta(a).
$$
Now $\delta(0)=q$, so that $\tau(L \cap m\N)=q$. Summing over all $x \in X$, i.e. over all nonzero classes mod $m$, we cover all of $L$ and the claimed formula follows. \vspace{-0.15cm}
\end{itemize}
\end{proof}

\begin{corollary}\label{prop w(s)} We have $W(S) \ = \ |P|\tau(X)-|X \cap D|q+\rho$.
\end{corollary}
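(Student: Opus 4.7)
The statement is an identity, so the plan is a direct substitution combining the three facts packaged in the previous proposition together with the relation $c = qm - \rho$ from Notation~\ref{S_i}.

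First I would expand the definition $W(S) = |P||L| - c$. The third bullet of Proposition~\ref{on X} says $|L| = q + \tau(X)$, so after substituting and also replacing $c$ by $qm - \rho$, I get
\[
W(S) \;=\; |P|\bigl(q + \tau(X)\bigr) - (qm - \rho) \;=\; |P|\tau(X) + q\bigl(|P| - m\bigr) + \rho.
\]

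Next I would apply the second bullet, $m = |P| + |X \cap D|$, which rearranges to $|P| - m = -|X \cap D|$. Plugging this into the parenthesis yields
\[
W(S) \;=\; |P|\tau(X) - q\,|X \cap D| + \rho,
\]
which is exactly the claimed formula.

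There is no real obstacle here; the corollary is essentially a bookkeeping identity repackaging the three items of Proposition~\ref{on X} together with $c = qm - \rho$. The only thing worth checking is sign consistency in the step $|P| - m = -|X \cap D|$, which is immediate from $m = |P| + |X \cap D|$.
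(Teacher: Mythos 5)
Your proof is correct and is essentially identical to the paper's: both expand $W(S)=|P||L|-c$ using $c=qm-\rho$, then substitute $|L|=q+\tau(X)$ and $m=|P|+|X\cap D|$ from Proposition~\ref{on X}. Nothing further is needed.
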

\begin{proof} By definition, $W(S)=|P||L|-c=|P||L|-qm+\rho$. Since $|L|=q+\tau(X)$ and $m=|P|+|X \cap D|$ by Proposition~\ref{on X}, the stated formula follows.
\end{proof}
Our proof strategy for Theorem~\ref{thm main} will be to use graphs to estimate $\tau(X)$ from below using~\eqref{delta addition} and simultaneously estimate $|X \cap D|$ from above, thereby leading to show $W(S) \ge 0$ for the numerical semigroups under consideration. For this purpose, the following considerations will be useful. First, here is an analogue, in additive notation, of the notion of proper divisor.
\begin{definition} Let $b \in S^*$. A \emph{summand} of $b$ is any $a \in S^*$ such that $b \in a+S^*$, i.e. such that there exists $s \in S^*$ with $b=a+s$.
\end{definition}
As a matter of notation, given $a,b \in S$, it is customary to write $a \preceq b$ whenever $b-a \in S$. The following additive property is well known and crucial. 

\begin{lemma}\label{lem apery summand} Let $x \in X \cap D$. If $x=a+b$ with $a,b \in S^*$, then $a,b \in X$. That is, any summand of a nonzero Apéry element is a nonzero Apéry element.
\end{lemma}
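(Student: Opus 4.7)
The plan is to argue by contradiction, using only the definition of the Apéry set. Suppose $x \in X \cap D$ and $x = a + b$ with $a, b \in S^*$. Assume, without loss of generality, that $a \notin X$; I will derive a contradiction.

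Since $a \in S^* = S \setminus \{0\}$ and $X = \ap(S) \setminus \{0\}$, the assumption $a \notin X$ forces $a \notin \ap(S)$, i.e. $a - m \in S$. Note $a - m$ could be $0$ (if $a = m$), but in any event $a - m \in S$. Now write
\[
x - m = (a - m) + b.
\]
Since $S$ is closed under addition and contains $0$, and since both $a - m \in S$ and $b \in S$, I conclude $x - m \in S$. This contradicts $x \in \ap(S)$, which requires $x - m \notin S$. Thus $a \in X$, and by symmetry $b \in X$.

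There is no real obstacle here: the statement is an immediate consequence of unpacking the definition of $\ap(S)$, combined with the closure of $S$ under addition. The only mild subtlety is allowing $a = m$, so that $a - m = 0$; this still lies in $S$, so the argument is unaffected.
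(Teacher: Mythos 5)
Your proof is correct and follows essentially the same argument as the paper: if $a\notin X$ then $a-m\in S$, so $x-m=(a-m)+b\in S$, contradicting $x\in\ap(S)$. You are even slightly more careful than the paper's own proof in explicitly allowing $a-m=0$.
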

\begin{proof} If $a \notin X$, then $a=a'+m$ for some $a' \in S^*$. Hence $x=a'+b+m$, whence $x \notin X$ since $a'+b \in S^*$.
\end{proof}

\section{The associated graph}\label{section graph}

In this section, we define a map $S \mapsto G(S)$ associating to every numerical semigroup $S$ a finite graph $G(S)$. Properties of $G(S)$ will then be shown to have a direct bearing on the parameters $\tau(X)$ and $|X \cap D|$ involved in Corollary~\ref{prop w(s)} and hence on Wilf's conjecture.  

\begin{definition} Let $S \subseteq \N$ be a numerical semigroup. The graph $G=G(S)$ associated to $S$ is defined as follows.
\begin{itemize} \vspace{-0.1cm}
\item The edge set $E(G)$ consists of all subsets $\{x,y\} \subseteq X$ such $x+y \in X$. The equality $x=y$ is allowed. \vspace{-0.1cm}
\item The vertex set $V(G)$ consists of all endvertices of the edges. Thus, an element $x \in X$ belongs to $V(G)$ if and only if there exists $y \in X$ such that $x+y \in X$. 
\end{itemize}
\end{definition} \vspace{-0.1cm}

\begin{remark} More generally, one may associate a graph $G(A)$ to any finite (or not) subset $A$ of a monoid $(M,+)$. The edges of $G(A)$ are all subsets $\{x,y\} \subseteq A$ such that $x+y \in A$, and its vertices are all endvertices of the edges. This graph carries much information on the additive properties of $A$. For a numerical semigroup $S$, the graph $G(S)$ is obtained in this general form by taking $G(S) = G(A)$, where $A=X$ is the set of nonzero Apéry elements of $S$.
\end{remark}

By construction, the graph $G(S)$ has no isolated vertices. More generally, it follows from the definition that $G(S)$ is a \emph{loopy graph} as defined below.

\begin{definition}
A \emph{loopy graph} is a finite graph with no isolated vertices, no multiple edges but possibly with loops. 
\end{definition}

We shall further need the following definitions/notation.

\begin{definition}
In a loopy graph, an edge with equal endvertices is a \emph{loop}, otherwise it is a \emph{true edge}. A vertex is \emph{loopy} if it supports a loop, or \emph{nonloopy} otherwise. The \emph{loopy-complete graph} on $n$ vertices, denoted $LK_n$, is the graph obtained from the complete graph $K_n$
by attaching a loop to every vertex.
\end{definition}

\begin{notation}
For a loopy graph $G$, we denote by $\lambda(G)$ its number of loops. It coincides with its number of loopy vertices since $G$ has no multiple edges.
\end{notation}

For example, Figure~\ref{pics} displays $G(S)$ for $S = \vs{12, 13, 14, 15, 17, 19, 20, 21}$. Here $|P|=8$, $m=12$ and $X=\{13, 14, 15, 17, 19, 20, 21, 28, 30, 34, 35\}$. In particular, the three loopy vertices are $14,15,17$, exactly those $x \in X$ such that $2x \in X$.

\medskip
\begin{figure}[h]
\begin{center}
\begin{tikzpicture}[scale=0.9, rotate=180]
   \path (0:1.7cm) coordinate (P0);
   \path (1*72:1.7cm) coordinate (P1);
   \path (2*72:1.7cm) coordinate (P2);
   \path (3*72:1.7cm) coordinate (P3);
   \path (4*72:1.7cm) coordinate (P4);

   \path (P2) +(168:1.9cm) coordinate (B);
   \path (P3) +(198:1.95cm) coordinate (A);
   
\draw [fill] (P0) circle (0.08);
\draw [fill] (P1) circle (0.08);
\draw [fill] (P2) circle (0.08);
\draw [fill] (P3) circle (0.08);
\draw [fill] (P4) circle (0.08);

\draw [fill] (A) circle (0.08);
\draw [fill] (B) circle (0.08);

   \draw[thick] (P0) -- (P1) -- (P2) -- (P3) -- (P4) -- cycle;

\draw[thick] (P3)--(A);
\draw[thick] (P2)--(B);

\draw[thick, rotate=-90] (P0) to [out=180,in=180] ++(0,1.25) to [out=0,in=0] ++(0,-1.25);

\draw[thick, rotate=180] (P3) to [out=160,in=180] ++(0,1.25) to [out=0,in=20] ++(0,-1.25);

\draw[thick, rotate=78] (B) to [out=180,in=180] ++(0,1.25) to [out=0,in=0] ++(0,-1.25);

\end{tikzpicture}
\end{center}
\caption{The graph $G(S)$ associated to $S = \vs{12, 13, 14, 15, 17, 19, 20, 21}$.}
\label{pics}
\end{figure}
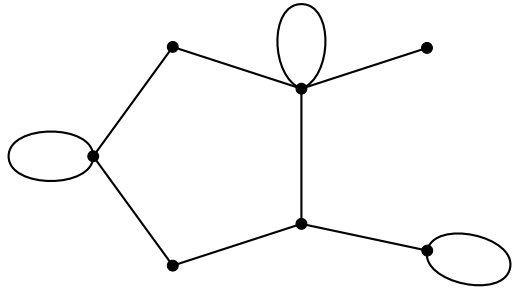

\subsection{Vertex-maximal matchings}\label{vertex-maximal}

Let $G=(V,E)$ be a loopy graph. A \emph{matching} $M$ in $G$ is a subgraph consisting of mutually nonadjacent edges. Loops are allowed in $M$.

\begin{definition} The \emph{vertex-maximal matching number of $G$} is the maximum number of vertices touched by a matching $M$ in $G$. We denote this number by $\vm(G)$. In formula:
$$
\vm(G) = \max_{M \subseteq G} |V(M)|
$$
where $M$ runs over all matchings of $G$. 
\end{definition}

\begin{definition}\label{active} A \emph{vertex-maximal matching of $G$} is a matching touching $\vm(G)$ vertices. An edge in $G$ is \emph{active} if it is contained in a vertex-maximal matching of $G$, and \emph{passive} otherwise. We denote by $E^+ \subseteq E$ the set of active edges.
\end{definition}

A loop needs not be active in general. However, a vertex-maximal matching contains all the loopy vertices, as easily seen. Moreover, we have $\vm(G) \ge \lambda(G)$, since any set of $\ell$ loops in $G$ is a matching with $\ell$ vertices.

\begin{proposition}\label{edge max} Let $G$ be a loopy graph with $\vm(G)=k$ and such that $G$ is edge-maximal for this property. Let $\ell=\lambda(G)$. Then $G$ contains $LK_\ell$.
\end{proposition}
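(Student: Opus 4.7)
The plan is to show that any two distinct loopy vertices of $G$ are joined by a true edge. Since every loopy vertex already supports its loop, this will immediately give $LK_\ell \subseteq G$.

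So let $u, v$ be distinct loopy vertices of $G$, and suppose for contradiction that $\{u,v\} \notin E(G)$. Form $G' = G \cup \{u,v\}$, i.e., add the single true edge $\{u,v\}$. Since $G$ has no multiple edges, this is a well-defined loopy graph strictly containing $G$. By the edge-maximality of $G$ with respect to the property $\vm = k$, we must have $\vm(G') \ge k+1$.

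Next, I would pick a vertex-maximal matching $M'$ of $G'$, so $|V(M')| \ge k+1$. The edge $\{u,v\}$ must belong to $M'$: otherwise $M'$ would be a matching of $G$ already, forcing $\vm(G) \ge k+1$, contradicting $\vm(G)=k$. Because $M'$ is a matching, $u$ and $v$ are touched only by the edge $\{u,v\}$ in $M'$, and hence the subgraph $M' \setminus \{u,v\}$ is a matching of $G$ that avoids both $u$ and $v$.

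Finally, I would form a new matching $M$ of $G$ by replacing the single edge $\{u,v\}$ with the two loops supported at $u$ and $v$:
$$
M \;=\; \bigl(M' \setminus \{u,v\}\bigr) \,\cup\, \bigl\{\text{loop at } u,\ \text{loop at } v\bigr\}.
$$
These two loops exist in $G$ by hypothesis, they are not adjacent to any edge of $M' \setminus \{u,v\}$ (which avoids $u$ and $v$), and they are not adjacent to each other since $u \neq v$. Hence $M$ is indeed a matching of $G$, and $|V(M)| = (|V(M')| - 2) + 1 + 1 = |V(M')| \ge k+1$, contradicting $\vm(G)=k$. This contradiction shows $\{u,v\} \in E(G)$, completing the argument.

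The main (though modest) obstacle is the clean bookkeeping in the swap from the edge $\{u,v\}$ to the two loops: one must verify that this operation preserves the matching property and precisely preserves the vertex count, which is what converts the assumed gain in $G'$ into a gain in $G$ itself and forces the contradiction.
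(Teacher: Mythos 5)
Your proof is correct and follows essentially the same route as the paper: the paper's argument also adds the missing edge between two nonadjacent loopy vertices and observes ("as easily seen") that this cannot increase $\vm$, contradicting edge-maximality. You have simply made explicit the key swap — trading the new edge $\{u,v\}$ in a matching of $G'$ for the two loops at $u$ and $v$, which preserves the number of touched vertices — which is exactly the detail the paper leaves to the reader.
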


\begin{proof} As mentioned above, every vertex-maximal matching in $G$ contains all of its $\ell$ loopy vertices\footnote{But again, not necessarily all of its \emph{loops}.}. Assume  that $x,y$ are nonadjacent loopy vertices. Then, as easily seen, adding the edge $\{x,y\}$ to $G$ does not increase $\vm(G)$. This contradicts the edge-maximality of $G$ with respect to $\vm(G)$. Hence $G \supseteq LK_\ell$.
\end{proof}

An interesting general question, with direct implications for the present approach to Wilf's conjecture, is the following.

\begin{question}\label{question} Given integers $n \ge k \ge 1$, let $G$ be a loopy graph on $n$ vertices and such that $\vm(G)=k$. What is the maximum number of edges allowed in $G$?
\end{question}
For instance, consider a loopy graph $G$ with $(n,k)=(5,4)$. While the non-complying graph $LK_5$ has $15$ edges, we show in Proposition~\ref{bound 10} that $G$ has at most $10$ edges, and this is optimal as witnessed by the complying graph $K_5$.

\medskip
For $n \ge k+2$ with $k \ge 2$ even, say $k=2r$, it might be that the optimal upper bound on $|E(G)|$ seeked in Question~\ref{question} is given by
$$
\binom{r+1}{2}+r(n-r).
$$
This number of edges is achieved by the complying graph $G = LK_r \vee \overline{K_{n-r}}$, the \emph{join}~\cite{CLZ} of $LK_r$ and the empty graph $\overline{K_{n-r}}$ on $n-r$ vertices. Recall that $G_1 \vee G_2$ is obtained by adding to $G_1 \sqcup G_2$ all possible edges between $V(G_1)$ and $V(G_2)$. 

A similar construction can be made for $k$ odd.

\subsection{The weight of edges}

Let $G=G(S)$ be the graph associated to a numerical semigroup $S \subseteq \N$. As usual, we denote by $D,X \subset S^*$ the sets of decomposable and nonzero Apéry elements, respectively.

\begin{definition} Let $e=\{x,y\} \in E(G)$. The \emph{weight} of $e$ is defined as $\wt(e)=x+y$. 
\end{definition}
By construction, this yields a map $\wt \colon E(G) \to X \cap D$.

\begin{proposition} The map $\wt \colon E(G) \to X \cap D$ is onto.
\end{proposition}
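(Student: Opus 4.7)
The plan is to observe that surjectivity follows almost immediately from Lemma~\ref{lem apery summand}. Given any $z \in X \cap D$, I want to exhibit an edge $e \in E(G)$ with $\wt(e) = z$.

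Since $z \in D$, by definition of $D = S^* + S^*$ we can write $z = a + b$ for some $a, b \in S^*$. Now I would invoke Lemma~\ref{lem apery summand}, which asserts that any summand of a nonzero Apéry element is again a nonzero Apéry element; applied to the decomposition $z = a + b$ with $z \in X$, this forces $a, b \in X$. Hence $\{a, b\} \subseteq X$ and $a + b = z \in X$, so by the very definition of the edge set of $G(S)$, the pair $\{a,b\}$ is an edge of $G(S)$ (with the possibility $a = b$, corresponding to a loop, explicitly allowed). By construction $\wt(\{a,b\}) = a + b = z$, which shows $z$ lies in the image of $\wt$.

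There is no real obstacle here: the entire statement is built so that Lemma~\ref{lem apery summand} does the work. The only minor point to mention is that the decomposition $z = a + b$ need not be unique, and $a$ may equal $b$, but since the graph $G(S)$ allows loops, this causes no issue in producing the required edge.
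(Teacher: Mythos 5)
Your proof is correct and follows essentially the same route as the paper: decompose $z \in X \cap D$ as $z = a+b$ with $a,b \in S^*$ and use Lemma~\ref{lem apery summand} to conclude $a,b \in X$, so $\{a,b\}$ is an edge of weight $z$. The paper's own proof is just a terser version of this, leaving the appeal to that lemma implicit.
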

\begin{proof} For every $z \in X \cap D$, there exist $x,y \in X$ such that $z=x+y$. Thus $\{x,y\}$ is an edge of $G$ and has weight $z$.
\end{proof}
It follows that 
\begin{equation}\label{X cap D le E}
|X \cap D| \le |E(G)|.
\end{equation}
Here is a useful formula for the difference $|E(G)|-|X \cap D|$.

\begin{proposition}\label{prop fibers} We have
$$
|X \cap D| = |E(G)| - \sum_{z \in X \cap D} (|\wt^{-1}(z)|-1).
$$
\end{proposition}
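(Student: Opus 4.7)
The plan is to exploit the fact, just established, that the weight map $\wt \colon E(G) \to X \cap D$ is surjective, and then partition the edge set $E(G)$ into its fibers under $\wt$.

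First I would write the obvious fiber decomposition
\[
|E(G)| \ = \ \sum_{z \in X \cap D} |\wt^{-1}(z)|,
\]
which holds because every edge has exactly one weight, so the fibers $\wt^{-1}(z)$ partition $E(G)$ as $z$ ranges over the image of $\wt$. Surjectivity of $\wt$ ensures that this image equals $X \cap D$, and in particular that each fiber $\wt^{-1}(z)$ is nonempty, i.e.\ $|\wt^{-1}(z)| \ge 1$ for every $z \in X \cap D$.

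Next I would rewrite each summand as $|\wt^{-1}(z)| = 1 + (|\wt^{-1}(z)| - 1)$, split the sum, and observe that the first piece contributes $|X \cap D|$ while the second piece is precisely the correction term in the claimed identity. Rearranging then yields
\[
|X \cap D| \ = \ |E(G)| - \sum_{z \in X \cap D} (|\wt^{-1}(z)|-1),
\]
as desired.

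There is really no obstacle here: the statement is just the inclusion--exclusion bookkeeping for a surjection expressed in a form tailored to measuring by how much the naive bound \eqref{X cap D le E} fails to be sharp. The only point worth emphasizing in the write-up is that surjectivity of $\wt$, proved in the preceding proposition, is what guarantees each term $|\wt^{-1}(z)|-1$ is nonnegative, so that the formula simultaneously recovers the inequality $|X \cap D| \le |E(G)|$ and quantifies its defect.
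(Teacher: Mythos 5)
Your argument is correct and is essentially identical to the paper's: both decompose $E(G)$ into the fibers of $\wt$, use surjectivity to guarantee each fiber is nonempty, and peel off a $1$ from each term $|\wt^{-1}(z)|$ to isolate $|X\cap D|$. Nothing is missing.
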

\begin{proof} The fibers of $\wt$ constitute a partition of $E(G)$. Thus
$$
|E(G)| = \sum_{z \in X \cap D} |\wt^{-1}(z)|.
$$
Note that $|\wt^{-1}(z)| \ge 1$ for all $z \in X \cap D$ since $w$ is onto. Subtracting $1$ to each such summand yields
$$
|E(G)| = |X \cap D|+\sum_{z \in X \cap D} (|\wt^{-1}(z)|-1). \qedhere
$$
\end{proof}

In particular, the larger $|V \cap D|$ is, the farther away $|X \cap D|$ will be from $|E(G)|$. For instance, if there is at least one fiber of cardinality more than $1$, then $|X \cap D| < |E(G)|$.

\begin{remark}
If all edge weights are distinct, then $\wt$ is a bijection and hence $|X \cap D|=|E(G)|$. 
\end{remark}

\begin{lemma} Distinct adjacent edges have distinct weights. Similarly, distinct loops have distinct weights. 
\end{lemma}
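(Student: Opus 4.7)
The plan is to observe that the weight function $\wt(e) = x+y$ on an edge $e = \{x,y\}$ is simply the sum of the endvertices in $\Z$, and that cancellation in $\Z$ will immediately force coincidences.

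For the first claim, I would take two distinct adjacent edges $e_1, e_2 \in E(G)$, let $x$ denote a common endvertex, and write $e_1 = \{x,a\}$, $e_2 = \{x,b\}$ with $a,b \in X$ (allowing $a=x$ or $b=x$ to cover the case where one of the edges is a loop at $x$). From $\wt(e_1) = \wt(e_2)$ one gets $x+a = x+b$ in $\Z$, so $a=b$, and then $e_1 = \{x,a\} = \{x,b\} = e_2$, contradicting distinctness. Hence adjacent distinct edges cannot share a weight.

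For the second claim, a loop in $G$ is of the form $\{x,x\}$ for some loopy vertex $x \in V(G)$, with weight $\wt(\{x,x\}) = 2x$. If two loops $\{x_1,x_1\}$ and $\{x_2,x_2\}$ share the same weight, then $2x_1 = 2x_2$ in $\Z$, so $x_1 = x_2$, and the loops coincide.

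The main obstacle is essentially nonexistent: the whole lemma is a one-line consequence of cancellation in $\Z$. The only point requiring slight care is the bookkeeping when a loop is adjacent to a true edge at the same vertex $x$; but the argument above handles this uniformly, since the notation $\{x,a\}$ with $a \in X$ accommodates both possibilities and the equality $\wt(e_1)=\wt(e_2)$ forces $a=b$ as multisets of size one. No appeal to any deeper property of the semigroup $S$, of $X$, or of the definition of $G(S)$ beyond the definition of $\wt$ is needed.
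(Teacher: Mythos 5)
Your proof is correct and is essentially the paper's own argument: both claims follow from cancellation in $\Z$ applied to $x+a=x+b$ and $2x_1=2x_2$, exactly as in the paper (which phrases it in the contrapositive). Your extra care about a loop adjacent to a true edge at the same vertex is fine but adds nothing beyond the paper's one-line proof.
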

\begin{proof} Distinct adjacent edges are of the form $\{x,y\}, \{x,z\}$ with $y \not= z$, whence $x+y \not= x+z$. Distinct loops are of the form $\{x,x\}, \{y,y\}$ with $x \not= y$, implying $2x \not= 2y$.
\end{proof}

\subsection{Normal and weak edges}

We use the same notation as above.

\begin{lemma} Let $\{x,y\}$ be an edge in $G$. Then $\delta(x)+\delta(y) \ge q-\min(\rho,1)$.
\end{lemma}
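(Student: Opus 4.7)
The plan is to combine two facts already established in the paper: the additive inequality for the depth function, and the nonnegativity of $\delta$ on Apéry elements.

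First I would unpack the hypothesis. By the definition of $G = G(S)$, saying that $\{x,y\}$ is an edge means $x, y \in X$ and $x + y \in X$. In particular, $x+y$ is a nonzero Apéry element of $S$, so Proposition~\ref{on X} applies to $x+y$ and gives $\delta(x+y) \ge 0$.

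Next I would invoke the rightmost inequality of~\eqref{delta addition} in Proposition~\ref{delta x y}, namely
\[
\delta(x) + \delta(y) \ \ge \ \delta(x+y) + q - \min(\rho, 1).
\]
Substituting the bound $\delta(x+y) \ge 0$ into the right-hand side immediately yields
\[
\delta(x) + \delta(y) \ \ge \ q - \min(\rho,1),
\]
which is the desired inequality.

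There is no real obstacle here: the lemma is essentially an immediate consequence of packaging the edge condition $x+y \in X$ (forcing $\delta(x+y) \ge 0$) into the general subadditivity-type estimate for $\delta$ already proved. The only thing to double-check is that Proposition~\ref{delta x y} requires $q \ge 1$; but if $q = 0$ then $S = \N$, in which case $X = \emptyset$ and there are no edges to consider, so the statement holds vacuously.
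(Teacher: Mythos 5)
Your proof is correct and is essentially the paper's own argument: the edge condition gives $x+y \in X$, hence $\delta(x+y) \ge 0$ by Proposition~\ref{on X}, and the rightmost inequality of \eqref{delta addition} then yields the claim. The extra remark about the degenerate case $S=\N$ (where $X=\emptyset$ and the statement is vacuous) is a harmless additional check.
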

\begin{proof} We have $x+y \in X$ by hypothesis. The inequality now directly follows from~\eqref{delta addition} and Proposition~\ref{on X}.
\end{proof}

\begin{definition} An edge $\{x,y\}$ in $G$ is \emph{weak} if $\delta(x)+\delta(y) = q-1$, and \emph{normal} otherwise, i.e. if $\delta(x)+\delta(y) \ge q$.
\end{definition}

\begin{remark} If $\rho=0$ then all edges of $G$ are normal. This follows from the above lemma.
\end{remark}

\begin{notation} We denote by $E_0(G)$ and $E_1(G)$ the set of \emph{weak} and \emph{normal} edges of $G$, respectively. Thus
$$
E(G) = E_0(G) \sqcup E_1(G).
$$
\end{notation}

\begin{lemma}\label{weak edge} If $\{x,y\} \in E_0(G)$, then $\delta(x+y)=0$.
\end{lemma}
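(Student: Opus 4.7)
The plan is to combine the right-hand inequality of Proposition~\ref{delta x y} with the nonnegativity of $\delta$ on $X$ from Proposition~\ref{on X}, squeezing $\delta(x+y)$ between $0$ and $0$.

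First I would observe that a weak edge can only exist when $\rho \ge 1$: indeed, the remark following the definition of weak/normal edges (itself an immediate consequence of the previous lemma) says that when $\rho = 0$ every edge of $G$ satisfies $\delta(x) + \delta(y) \ge q$, so $E_0(G)$ is empty. Hence in the hypothesis of the lemma we may assume $\rho \ge 1$, and therefore $\min(\rho, 1) = 1$.

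Next I would invoke Proposition~\ref{delta x y}, specifically its right-hand inequality
$$\delta(x) + \delta(y) \ \ge \ \delta(x+y) + q - \min(\rho, 1) \ = \ \delta(x+y) + q - 1.$$
Since $\{x,y\}$ is weak, the left-hand side equals $q - 1$, which rearranges to $\delta(x+y) \le 0$. On the other hand, $\{x,y\}$ being an edge of $G(S)$ means by construction that $x + y \in X$, so the first bullet of Proposition~\ref{on X} gives $\delta(x+y) \ge 0$. Combining the two inequalities forces $\delta(x+y) = 0$, which is exactly the claim.

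There is essentially no obstacle here: the argument is a two-line chase through the inequalities already established. The only subtlety worth flagging is the tacit use of $\rho \ge 1$ to ensure $\min(\rho, 1) = 1$, since otherwise the right-hand bound in \eqref{delta addition} would be looser and the conclusion $\delta(x+y) \le 0$ would not follow directly.
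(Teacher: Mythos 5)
Your proof is correct and follows essentially the same route as the paper: establish $\rho \ge 1$ from the weakness of the edge, apply the right-hand inequality of \eqref{delta addition} to get $\delta(x+y) \le 0$, and combine with $\delta(x+y) \ge 0$ from Proposition~\ref{on X}. No gaps.
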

\begin{proof} Indeed, by hypothesis we have $x+y \in X$ and $\delta(x)+\delta(y)=q-1$. The former implies $\delta(x+y) \ge 0$ by Proposition~\ref{on X}, and the latter implies $\rho \ge 1$ and $\delta(x+y) = 0$ by \eqref{delta addition}.
\end{proof}

\begin{proposition}\label{prop X0} Let $S \subseteq \N$ be a numerical semigroup. Let 
\begin{equation}\label{X0}
X_0=\{z \in X \cap D \mid \exists x,y \in X, \ z = x+y, \ \delta(x)+\delta(y) \ = \ \delta(z) + q -1\}.
\end{equation}
Then $|X_0| \le \rho$.
\end{proposition}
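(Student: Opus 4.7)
The plan is to translate each nonzero Apéry element $z$ to its canonical representative $\phi(z) = z + \delta(z) m$, which by definition of $\delta$ lies in the window $[c, c+m[$. Since $\ap(S)$ contains exactly one element per residue class modulo $m$ and the interval $[c,c+m[$ has the same property, the map $s \mapsto s+\delta(s)m$ is a bijection from $\ap(S)$ onto $[c,c+m[$; in particular, $\phi$ is injective on $X = \ap(S) \setminus \{0\}$. The strategy will be to confine $\phi(X_0)$ to the short right-hand subinterval $[c+m-\rho,\,c+m[$, which contains exactly $\rho$ integers.

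So let $z \in X_0$ and pick a witnessing decomposition $z = x+y$ with $x,y \in X$ satisfying $\delta(x)+\delta(y) = \delta(z) + q - 1$. Adding the defining identities for $\phi(x)$ and $\phi(y)$ gives
\begin{equation*}
\phi(x) + \phi(y) \ = \ (x+y) + (\delta(x)+\delta(y))m \ = \ \phi(z) + (q-1)m,
\end{equation*}
hence $\phi(z) = \phi(x) + \phi(y) - (q-1)m$. Using $\phi(x),\phi(y) \ge c = qm - \rho$, this yields the crucial lower bound
\begin{equation*}
\phi(z) \ \ge \ 2c - (q-1)m \ = \ (q+1)m - 2\rho \ = \ c+m-\rho,
\end{equation*}
while $\phi(z) < c+m$ is automatic. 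Therefore $\phi(X_0) \subseteq [c+m-\rho,\, c+m[\, \cap\, \Z$, a set of cardinality exactly $\rho$, and the injectivity of $\phi$ on $X$ gives $|X_0| \le \rho$.

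The argument subsumes the case $\rho = 0$ seamlessly: the lower bound would then force $\phi(z) \ge c+m$, contradicting $\phi(z) < c+m$, so $X_0 = \emptyset$, consistent with Proposition~\ref{delta x y} (which forbids the extra $-1$ in the bound when $\min(\rho,1) = 0$). The main temptation to resist is a block-by-block accounting through the partition $S = \bigsqcup_i S_i$, which would force one to track all sums $S_i + S_j$ landing in $S_{i+j-1}$ simultaneously; packaging everything via the single translate $\phi$ collapses this multi-block bookkeeping into a one-line inequality inside the window $[c, c+m[$, so there is no serious technical obstacle beyond spotting the right reformulation.
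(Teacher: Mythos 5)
Your proof is correct and rests on the same idea as the paper's: the defining condition confines $z$ to exactly $\rho$ residue classes modulo $m$ (the paper phrases this as $(S_i+S_j)\cap S_{i+j-1}\subseteq[(i+j)m-2\rho,(i+j)m-\rho[$, you as $\phi(z)\in[c+m-\rho,c+m[$, and these pick out the same classes), after which the fact that $X$ meets each class at most once finishes the count. Your normalization $\phi(z)=z+\delta(z)m$ is a clean, self-contained way to package the paper's block-by-block bookkeeping into a single inequality, but the underlying argument is the same.
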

\begin{proof} Let $z = x+y \in X_0$, and assume $x \in S_i, y \in S_j$. Then $\delta(x)+\delta(y) \ = \ \delta(z) + q -1$ if and only if $z \in S_{i+j-1}$. Now, by the definition of the $S_i$, we have
$$(S_i+S_j) \cap S_{i+j-1} \subseteq [(i+j)m-2\rho,(i+j)m-\rho[.
$$
Thus, the only classes mod $m$ for which such a deficit may occur are those in $[-2\rho,-\rho[$. And since there is only one element of $X$ per class mod $m$, the statement follows.
\end{proof}

\begin{corollary}\label{rho} We have $\rho \ge |\wt(E_0(G))|$.
\end{corollary}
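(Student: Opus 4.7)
The plan is to show that $\wt(E_0(G)) \subseteq X_0$, where $X_0$ is the set defined in Proposition~\ref{prop X0}, and then apply that proposition directly.

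First I would fix a weak edge $\{x,y\} \in E_0(G)$ and set $z = \wt(\{x,y\}) = x+y$. By definition of $E(G)$ we have $z \in X$, and since $z = x+y$ with $x,y \in X \subseteq S^*$ we also have $z \in D$, so $z \in X \cap D$. By the definition of a weak edge, $\delta(x)+\delta(y) = q-1$. Invoking Lemma~\ref{weak edge}, we obtain $\delta(z) = 0$, hence
\[
\delta(x) + \delta(y) \ = \ q - 1 \ = \ \delta(z) + q - 1,
\]
which is precisely the condition appearing in~\eqref{X0}. Therefore $z \in X_0$, showing $\wt(E_0(G)) \subseteq X_0$.

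Combining this inclusion with the bound $|X_0| \le \rho$ from Proposition~\ref{prop X0} gives
\[
|\wt(E_0(G))| \ \le \ |X_0| \ \le \ \rho,
\]
which is the desired inequality.

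There is no real obstacle here: the corollary is essentially a repackaging of Lemma~\ref{weak edge} together with Proposition~\ref{prop X0}. The only care needed is to verify that the weight $z$ of a weak edge indeed lies in $X \cap D$ (so that it qualifies as a candidate element of $X_0$) and that the depth equation matches the one in~\eqref{X0} on the nose, both of which are immediate.
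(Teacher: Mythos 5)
Your proof is correct and follows exactly the paper's own argument: show $\wt(E_0(G)) \subseteq X_0$ via Lemma~\ref{weak edge} (which gives $\delta(z)=0$ for the weight $z$ of a weak edge), then conclude with the bound $|X_0| \le \rho$ from Proposition~\ref{prop X0}. Nothing to add.
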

\begin{proof} Let $X_0 \subseteq X \cap D$ be as defined in \eqref{X0}. It suffices to show
\begin{equation}\label{wt E0 subset X0}
\wt(E_0(G)) \subseteq X_0,
\end{equation}
and the conclusion will follow from Proposition~\ref{prop X0}. Let $e=\{x,y\} \in E_0(G)$. Then $\delta(x)+\delta(y)=q-1$ by hypothesis. Let $z=\wt(e)=x+y$. Then $z \in X \cap D$ by definition, and $\delta(z) = 0$ by Lemma~\ref{weak edge}. Therefore $z \in X_0$ and we are done.
\end{proof}

\subsection{The normality number}

We keep using the same notation as above.
\begin{definition} The \emph{normality number} of the graph $G=G(S)$ is defined as
$$
\nu=\nu(G) = \max_{M \subseteq G}\ \#\{\textrm{endvertices of all normal edges in } M\},
$$
where $M$ runs over all vertex-maximal matchings in $G$. Thus $0 \le \nu \le \vm(G)$.
\end{definition}

Recall from Section~\ref{vertex-maximal} that an edge is \emph{active} if it belongs to a vertex-maximal matching, and that we denote by $E^+ \subseteq E$ the subset of active edges. The partition $E=E_0 \sqcup E_1$ into weak and normal edges induces a corresponding partition on $E^+$. 
\begin{notation} We denote by $E_0^+ \subseteq E_0$ the subset of active weak edges, and by $E_1^+ \subseteq E_1$ the subset of active normal edges. Thus $E^+=E_0^+ \sqcup E_1^+$.
\end{notation}

The interest of this partition is that \emph{only active edges} are actually involved in the definition of the normality number $\nu(G)$. That is, we have
\begin{equation}\label{on nu}
\nu(G) = \max_{M \subseteq G}\ \#\{\textrm{endvertices of } E(M) \cap E_1^+\},
\end{equation}
where $M$ runs over all vertex-maximal matchings in $G$.

\subsection{A lower bound on $\tau(X)$}

We now have all the ingredients at hand to formulate our key lower bound on $\tau(X)$ and hence on $W(S)$. We keep using the same notation as above.

\begin{theorem}\label{thm formula} Let $G=G(S)$, $n=|V(G)|$ and $k=\vm(G)$. Then
$$
\tau(X) \ge \big(k(q-1)+\nu\big)/2 +(n-k).
$$
\end{theorem}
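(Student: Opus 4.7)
The plan is to write $\tau(X) = \sum_{x \in X} \delta(x)$, discard the nonnegative contribution of $X \setminus V(G)$ (legitimate by Proposition~\ref{on X}), and then split the sum over $V(G)$ into contributions from $V(M)$ and $V(G) \setminus V(M)$, where $M$ is a vertex-maximal matching of $G$ chosen to realize $\nu(G) = \nu$. The two resulting lower bounds will match exactly the two terms in the stated inequality.

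First, I would establish the pointwise inequality $\delta(v) \ge 1$ for every $v \in V(G)$. If $v \in V(G)$, there exists $y \in X$ with $v + y \in X$; combining $y \ge m$ with $v + y \le \max X = c + m - 1$ forces $v \le c - 1$, so $v \in L$. The partition~\eqref{subset L}, together with $\delta|_{S_i} = q - i$, then yields $\delta(v) \ge 1$. In particular, the $n - k$ vertices of $V(G) \setminus V(M)$ each contribute at least $1$ to $\tau(X)$, producing the $(n-k)$ summand.

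For the matching contribution, I would apply Proposition~\ref{delta x y} edge-by-edge in $M$. A true edge $\{x,y\} \in M$ yields $\delta(x)+\delta(y) \ge q$ if normal and $\ge q-1$ if weak, while a loop $\{x,x\} \in M$ yields $\delta(x) \ge q/2$ or $\ge (q-1)/2$ accordingly. Writing $N_t, N_l, W_t, W_l$ for the normal-true, normal-loop, weak-true, weak-loop edges of $M$, we have $2|N_t| + |N_l| = \nu$ and $2|W_t| + |W_l| = k - \nu$; grouping normal true edges with normal loops (jointly contributing $\ge \tfrac{q}{2}\nu$) and weak true edges with weak loops (jointly $\ge \tfrac{q-1}{2}(k - \nu)$) collapses to $\sum_{v \in V(M)} \delta(v) \ge ((q-1)k + \nu)/2$. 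Adding this to the previous bound then delivers the theorem.

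The only non-mechanical ingredient is the pointwise estimate $\delta(v) \ge 1$ on $V(G)$: this is what produces the $(n-k)$ summand, without which only the matching half $((q-1)k + \nu)/2$ would remain. Everything else is routine bookkeeping of the edge inequalities from Proposition~\ref{delta x y}.
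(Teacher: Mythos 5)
Your proof is correct and follows essentially the same route as the paper: choose a vertex-maximal matching $M$ realizing $\nu$, bound $\tau(V(M))$ edge-by-edge via Proposition~\ref{delta x y} distinguishing weak/normal and loop/nonloop edges, and bound the $n-k$ unmatched vertices of $V(G)$ below by $1$ each using $V(G)\subset L$. The only (welcome) difference is that you spell out why $\delta(v)\ge 1$ on $V(G)$, a point the paper asserts without detail.
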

\begin{proof} Let $M \subseteq G$ be a vertex-maximal matching, and set $V_M = V(M)$. Thus $|V_M|=k$. Moreover, by \eqref{on nu}, we may assume that the number of vertices touched by the normal edges of $M$ is maximal, i.e. is equal to $\nu=\nu(G)$. 

We have $\tau(X) \ge \tau(V)$ since $V \subset X$. We now evaluate $\tau(V)$ from below. Let $\overline{V}_M=V \setminus V_M$. Then $|\overline{V}_M|=n-k$. We have $\tau(V)=\tau(V_M)+\tau(\overline{V}_M)$. Since $V \subset L$ and since $\delta(a) \ge 1$ for all $a \in L$, we have 
$$\tau(\overline{V}_M) \ge |\overline{V}_M|=n-k.$$
We now estimate $\tau(V_M)$. For that, we need to count the edges of $M$ by distinguishing the nonloops and the loops, and the weak and the normal ones. Let $r_0,t_0$ denote the number of weak nonloops and loops in $M$, respectively. Similarly, let $r_1,t_1$ denote the number of normal nonloops and loops in $M$, respectively. Thus
$$
k= 2(r_0+r_1)+t_0+t_1, \,\, \nu=2r_1+t_1.
$$

For every edge $\{x,y\}$ in $M$, we have $\delta(x)+\delta(y)=q-1$ if it is weak, while $\delta(x)+\delta(y) \ge q$ if it is normal. It follows that
\begin{eqnarray*}
\tau(V_M) & \ge & r_0(q-1)+r_1q + t_0(q-1)/2+t_1q/2 \\
& = & \big((2r_0+t_0+2r_1+t_1)(q-1)+2r_1+t_1\big)/2 \\
& = & \big(k(q-1)+\nu \big)/2.
\end{eqnarray*}
Summarizing, we have 
$$
\tau(X) \ge \tau(V) = \tau(V_M)+\tau(\overline{V}_M) \ge \big(k(q-1)+\nu \big)/2+(n-k). \qedhere
$$
 \end{proof}

\section{Properties of $G(S)$}\label{section properties}

Let $G(S)=G=(V,E)$ be the graph associated to the numerical semigroup $S$. Most results in this section, combining algebraic and graph-theoretic properties, will be used in Section~\ref{section main} to prove Theorem~\ref{thm main}.

\smallskip
Among the vertices in $V$, distinguishing between the primitive and the decomposable ones is crucial. Thus, we shall systematically consider the partition
$$
V = (V \cap P) \sqcup (V \cap D).
$$

In this context, we prefer using the more intuitive multiplicative notation, as the elements of $V \cap D$ are best viewed as \emph{monomials} in $V \cap P$. 

For instance, if $V \cap P=\{x_1,x_2\}$ and $V \cap D=\{2x_1, x_1+x_2, 2x_2, 3x_1\}$ in standard additive notation, we prefer to write $V \cap D = \{x_1^2, x_1x_2, x_2^2, x_1^3\}$. In this way, we can speak of divisors, multiples, antichains under divisibility, and so on. For instance, we find it more convenient to say ``$x_1$ divides $x_1x_2$'' rather than ``$x_1$ is a summand of $x_1+x_2$'' or write $x_1 \preceq (x_1+x_2)$ in standard additive notation.

\smallskip
More formally, let us rename our given additive numerical semigroup $S$ as $S_0$. We then embed $S_0$ in the one-variable polynomial ring $\R[Z]$, and more precisely in the semigroup ring $\R[S_0] \subseteq \R[Z]$, where
$$
\R[S_0]=\{\sum_{a \in S_0} \lambda_aZ^a \mid \lambda_a \in \R \textrm{ for all } a \in S_0 \textrm{ and } \lambda_a = 0 \textrm{ for almost all }a \}.
$$
We then set 
$S=\{Z^a \mid a \in S_0\}.$
It is a multiplicative submonoid of $\{Z^n \mid n \in \N\}$ with finite complement and neutral element $Z^0=1$. We have a monoid isomorphism
\begin{equation}\label{iso}
\varphi \colon S_0 \to S
\end{equation}
defined by $\varphi(a)=Z^a$ and satisfying $\varphi(a+b)=\varphi(a)\varphi(b)$ for all $a,b \in S_0$.  We will refer to $S$ as a \emph{numerical semigroup in multiplicative notation}. 

\subsection{Switching to multiplicative notation}

Thus, from now on in this section, $S$ is a numerical semigroup in multiplicative notation, arising from its additive counterpart $S_0 \subseteq \N$ via the isomorphism $\varphi$ in \eqref{iso}. We denote $S^*=S \setminus \{1\}$. All other usual notions related to $S_0$, such as the multiplicity, the conductor, the subsets $L,P,D,X,V$ and so on, are transported via $\varphi$ to $S$ without changing notation.

For clarity, let us rewrite the weight of edges of $G=G(S)$ in multiplicative notation. The weight map $\wt \colon E(G) \to X \cap D$ is then defined as follows: for any edge $\{x,y\} \in E(G)$, we set
 $$
 \wt(\{x,y\}) = xy.
 $$
 Note that $xy \in X \cap D$ by construction.
 
A word of caution is needed here. The decomposition of an element $z \in X \cap D$ as a product of primitive elements is not unique in general. That is, $z$ may be represented by several formally distinct monomials in $P$. On the other hand, we do have simplification properties such as $$x^2=y^2 \Rightarrow x=y \,\,\textrm{ and }\,\, xz=yz\Rightarrow x=y$$
for all $x,y,z \in S$, as follows from the analogous additive properties in $S_0 \subseteq \N$.
 
\subsection{Downsets}

As above, let $S$ denote a numerical semigroup in multiplicative notation.

\begin{definition} Let $u \in S^*$. A \emph{proper factor} of $u$ is an element $v \in S^*$ such that $v \not=u$ and $v$ divides $u$, i.e. such that there exists $v' \in S^*$ satisfying $u=vv'$. 
\end{definition}

\begin{definition} A \emph{downset} in $S^*$ is a subset $I \subseteq S^*$ which is stable under taking proper factors. That is, if $u \in I$ and if $v \in S^*$ is a proper factor of $u$, then $v \in I$.
\end{definition}

The following lemma is a restatement of Lemma~\ref{lem apery summand} in the present context.

\begin{lemma}\label{lem downset} The subset $X \subset S^*$ is a downset. \hfill $\Box$
\end{lemma}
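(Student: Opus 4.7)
The plan is to reduce directly to Lemma~\ref{lem apery summand} via the isomorphism $\varphi$ of \eqref{iso}. The content of the statement is the assertion that if $u\in X$ and $v\in S^*$ is a proper factor of $u$, then $v\in X$; nothing more is claimed.

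First I would unpack the hypothesis. Let $u\in X$ and suppose $v\in S^*$ is a proper factor of $u$, so by definition there exists $v'\in S^*$ with $u=vv'$ and $v\neq u$. The inequality $v\neq u$ forces $v'\neq 1$, so both $v,v'\in S^*$ are nontrivial. In particular $u\in D$, i.e.\ $u\in X\cap D$.

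Next I would transport the decomposition back to the additive semigroup $S_0$ through the monoid isomorphism $\varphi\colon S_0\to S$. Write $u=\varphi(a)$, $v=\varphi(b)$, $v'=\varphi(b')$ with $a,b,b'\in S_0$; since $\varphi$ is a bijective monoid isomorphism and identifies $X,D,S^*$ with their additive counterparts, we have $a\in X\cap D$ (additively), $b,b'\in S_0^*$, and $a=b+b'$. Now Lemma~\ref{lem apery summand} applies verbatim and yields $b,b'\in X$ in $S_0$. Pushing forward through $\varphi$ gives $v=\varphi(b)\in X$ (and, for free, $v'\in X$ as well). Hence $X$ is stable under taking proper factors and is therefore a downset in $S^*$.

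There is no genuine obstacle: the only thing being done is the observation that ``proper factor'' in multiplicative notation corresponds exactly to ``summand'' in additive notation, so Lemma~\ref{lem apery summand} is the same statement verbatim. The lemma is essentially bookkeeping, recording that the previously established fact continues to hold in the multiplicative reformulation used throughout the remainder of Section~\ref{section properties}.
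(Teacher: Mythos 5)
Your proof is correct and is exactly the paper's intended argument: the paper states this lemma as an immediate restatement of Lemma~\ref{lem apery summand} under the isomorphism $\varphi$, which is precisely the translation you carry out. The bookkeeping you supply (a proper factor $v$ of $u=vv'$ with $v\neq u$ forces $v'\neq 1$, so $u\in X\cap D$ and the additive summand lemma applies) is the whole content.
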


\begin{lemma}\label{V and X cap D} The set $V$ of vertices of $G$ is a downset. It coincides with the set of proper factors of all elements of $X \cap D$.
\end{lemma}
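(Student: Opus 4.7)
The plan is to prove the characterization first (that $V$ equals the set of proper factors of the elements of $X \cap D$), and then to deduce the downset property from it as a one-line corollary.

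For the characterization, I would proceed by double inclusion. Given $v \in V$, the definition of the vertex set provides some $y \in X$ with $vy \in X$; since $vy$ is the product of two nonunits $v, y \in S^*$, we have $vy \in X \cap D$, and $v$ is a proper factor of $vy$ (with cofactor $y \in S^*$, so that $vy \neq v$). Conversely, suppose $v$ is a proper factor of some $z \in X \cap D$, meaning $z = vw$ with $w \in S^*$ and $v \neq z$. Then $w$ is likewise a proper factor of $z$: indeed $w \in S^*$, and $w = z$ would force $v = 1$ by cancellation in $S$, contradicting $v \in S^*$. By Lemma~\ref{lem downset}, since $X$ is a downset containing $z$, both $v$ and $w$ lie in $X$. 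Since $vw = z \in X$, the pair $\{v, w\}$ is an edge of $G$ (a loop when $v = w$), and hence $v \in V$.

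The downset property is then immediate. If $u \in V$ and $v$ is a proper factor of $u$, the characterization just established provides some $z \in X \cap D$ of which $u$ is a proper factor. Transitivity of divisibility yields $v \mid z$; moreover $v \neq z$, since otherwise $v \mid u \mid z = v$ would force $u = v$ by antisymmetry of divisibility in the cancellative monoid $S$, contradicting $v \neq u$. Thus $v$ is a proper factor of $z \in X \cap D$, and applying the characterization again gives $v \in V$.

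I do not anticipate any real obstacle here. The only care needed lies in checking the \emph{proper} part of ``proper factor'' at each step, which is routine and relies only on cancellation in $S$; the substance of the lemma is essentially a reformulation of the edge-definition of $G$ in multiplicative language, fed by the already-proved fact that $X$ is itself a downset.
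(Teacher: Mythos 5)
Your proof is correct and uses the same essential ingredients as the paper's (the edge definition of $G$ and the fact that $X$ is a downset, Lemma~\ref{lem downset}); the only difference is organizational, in that you establish the characterization of $V$ as the set of proper factors of $X \cap D$ first and deduce the downset property via transitivity of divisibility, whereas the paper proves the downset property directly by observing that $x'y$ is a proper factor of $xy \in X \cap D$. Both routes are equally valid and of the same length, so this counts as essentially the same proof.
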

\begin{proof} Let $x \in V$. Then there exists $y \in X$ such that $xy \in X$ and so $\{x,y\} \in E$. Actually $xy \in X \cap D$ and $x$ is a proper factor of $xy$. If $x'$ is a proper factor of $x$, then $x'y$ is a proper factor of $xy$, hence it belongs to $X$ since $X$ is a downset, hence $\{x',y\} \in E$. This implies $x' \in V$. Therefore $V$ is a downset, as claimed. Let now $z \in X \cap D$, and let $x \in S^*$ be a proper factor of $z$. Let $y=z/x$. Then $x,y \in X$ by Lemma~\ref{lem downset} and $\{x,y\} \in E$. Hence $x \in V$, as desired.
\end{proof}

Given a vertex $x \in V$, we denote as usual by $N_G(x) \subseteq V$ its set of neighbors, i.e.
$$
N_G(x) = \{y \in X \mid xy \in X\} = \{y \in V \mid xy \in X\}.
$$
As usual, the \emph{degree} of vertex $x$ is defined as $\deg(x) = |N_G(x)|$.

\begin{lemma}\label{neighbor} Let $u \in V$. Then $N_G(u)$ is a downset.
\end{lemma}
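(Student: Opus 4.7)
The plan is to unfold the definition of $N_G(u)$ and then apply Lemma~\ref{lem downset} (that $X$ itself is a downset) twice: once to the element $y \in N_G(u)$ and once to the product $uy \in X$. So let $y \in N_G(u)$, meaning $y \in X$ and $uy \in X$, and let $y' \in S^*$ be a proper factor of $y$. I must show $y' \in N_G(u)$, i.e.\ that $y' \in X$ and $uy' \in X$.

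First, since $y \in X$ and $y'$ is a proper factor of $y$, Lemma~\ref{lem downset} gives $y' \in X$. Second, write $y = y' y''$ with $y'' \in S^*$; then $uy = u y' y''$, which exhibits $u y'$ as a proper factor of $uy$ (noting $u y' \neq uy$ by the simplification property $u y' = u y \Rightarrow y' = y$, contradicting properness). Since $uy \in X$ and $X$ is a downset, this yields $u y' \in X$. Combining, $y' \in X$ and $u y' \in X$, so $y' \in N_G(u)$, which is exactly what ``downset'' requires.

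I do not expect any real obstacle here: the statement is essentially a two-line consequence of Lemma~\ref{lem downset}. The only minor subtlety is making sure ``proper factor'' is preserved under multiplication by $u$, which reduces to the cancellation law $u y' = u y \Rightarrow y' = y$ in $S$ (valid since $S$ embeds in $\N$ via the inverse of $\varphi$). Once that is observed, the argument is just a direct application of the downset property of $X$ to both $y$ and $uy$.
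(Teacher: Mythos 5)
Your proof is correct and follows essentially the same route as the paper's: write the neighbor as a product with its proper factor and apply the downset property of $X$ to conclude that $uy'$ (a proper factor of $uy$) lies in $X$. Your extra care in checking $y' \in X$ and that properness is preserved under multiplication by $u$ (via cancellation) only makes the argument more complete.
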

\begin{proof} We have $uv \in X$ since $v \in N_G(u)$. Let $w$ be a proper factor of $v$. Then $w \in V$ and $v=v'w$ for some $v' \in V$. Hence $uv'w \in X$, implying $uw \in X$, implying in turn $w \in N_G(u)$.
\end{proof}

\subsection{More vertex properties}

\begin{lemma}\label{P and V} We have $|P| \ge |V \cap P|+1$.
\end{lemma}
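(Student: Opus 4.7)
The plan is to exhibit a primitive element of $S$ that does not occur as a vertex of $G$, and then conclude from the inclusion $V \cap P \subseteq X \cap P$ together with the equality $|X \cap P| = |P| - 1$ already recorded in the proof of Proposition~\ref{on X}.

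The natural candidate is the multiplicity element, still denoted $m$ in multiplicative notation. It is primitive because it is the smallest element of $S^*$ (under the ordering inherited from $S_0$ via $\varphi$), so it cannot be written as $uv$ with $u,v \in S^*$. On the other hand, $m \notin X = \ap(S)\setminus\{1\}$, since $m/m = 1 \in S$ (equivalently $m-m = 0 \in S_0$), so $m$ violates the defining condition of an Ap\'ery element. Because every vertex of $G$ lies in $X$ by the very construction of the graph, this immediately gives $m \in P \setminus V$.

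Consequently $V \cap P \subseteq X \cap P = P \setminus \{m\}$, the last equality being the statement $P \setminus X = \{m\}$ that was used in the proof of Proposition~\ref{on X}. Taking cardinalities yields $|V \cap P| \le |P|-1$, which is the claimed inequality. I do not expect a substantial obstacle here: the argument is a short bookkeeping step combining the definition of $X$ with the observation that vertices of $G$ are drawn from $X$, so the exclusion of $m$ from $X$ transfers directly to its exclusion from $V \cap P$.
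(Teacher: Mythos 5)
Your argument is correct and is essentially the paper's own proof: both exhibit the multiplicity element $m$ as a primitive element lying outside $V$, because $m \notin X$ (its difference with itself is $0 \in S_0$) and every vertex of $G$ belongs to $X$. The extra bookkeeping via $V \cap P \subseteq X \cap P = P \setminus \{m\}$ is a harmless elaboration of the same one-line observation.
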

\begin{proof} Indeed, with $m$ denoting as usual the multiplicity of $S$, we have $m \in P \setminus V$ since $m \notin X$.
\end{proof}

The next result helps locate in $V$ the proper factors of the vertices in $V \cap D$, if any. 

\begin{proposition}\label{degrees} Let $v_1 \not= v_2 \in V$. If $v_1$ divides $v_2$, then $\deg(v_1) > \deg(v_2)$.
\end{proposition}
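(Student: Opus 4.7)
The plan is to establish the stronger statement $N_G(v_2) \subsetneq N_G(v_1)$, from which $\deg(v_1) > \deg(v_2)$ is immediate. The containment will follow from the fact that $X$ is a downset (Lemma~\ref{lem downset}), and the strictness will come from exhibiting an explicit vertex in $N_G(v_1) \setminus N_G(v_2)$, using the finiteness of $X$.

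First I would write $v_2 = v_1 u$ for some $u \in S^*$, with $u \neq 1$ since $v_1 \neq v_2$. For the containment, let $w \in N_G(v_2)$, so $v_2 w = v_1 u w \in X$. Then $v_1 w \in S^*$ is a proper factor of $v_2 w$ (proper because $u \neq 1$), so $v_1 w \in X$ by the downset property of $X$, which means $w \in N_G(v_1)$.

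For strictness, the candidate element will be a suitable power $u^k$ of $u$. Observe first that $u$ itself is a proper factor of $v_2 \in V \subseteq X$, hence $u \in X$, and moreover $v_1 \cdot u = v_2 \in X$, so $u \in N_G(v_1)$. Now I claim that $u^k \notin N_G(v_2)$ for some $k \geq 1$. Indeed, suppose by contradiction that $u^j \in N_G(v_2)$ for every $j \geq 1$; then $v_2 u^j \in X$ for all $j \geq 1$, but the elements $v_2 u^j$ are pairwise distinct and unbounded (working in additive notation, $v_2 + j u$ grows with $j$), contradicting the fact that $X \subseteq [m, c+m[$ is finite. So let $k \geq 1$ be the smallest integer for which $u^k \notin N_G(v_2)$.

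Finally I would verify that this $u^k$ lies in $N_G(v_1)$. This is true for $k=1$ by the previous paragraph. For $k \geq 2$, minimality of $k$ gives $u^{k-1} \in N_G(v_2)$, so $v_2 u^{k-1} = v_1 u^k \in X$; the downset property then forces $u^k \in X$, and together with $v_1 u^k \in X$ this yields $u^k \in N_G(v_1)$. Hence $u^k \in N_G(v_1) \setminus N_G(v_2)$, completing the proof. The main subtlety to watch is merely ensuring that the chain $u, u^2, u^3, \ldots$ cannot remain indefinitely inside $N_G(v_2)$, which is where the finiteness of $X$ enters decisively.
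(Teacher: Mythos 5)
Your proof is correct, but it reaches the strict inequality by a different mechanism than the paper. The paper never proves (or needs) the strict containment $N_G(v_2)\subsetneq N_G(v_1)$: writing $v_2=v_1w$ and $N_G(v_2)=\{z_1,\dots,z_t\}$, it observes that $z_iv_2=z_i w v_1\in X$ together with the downset property of $X$ puts the whole set $\{w,\,z_1w,\dots,z_tw\}$ inside $N_G(v_1)$, and these are $t+1$ pairwise distinct elements (cancellation), so $\deg(v_1)\ge t+1$ immediately. That argument is purely local and combinatorial. You instead prove the stronger statement $N_G(v_2)\subsetneq N_G(v_1)$: the containment is the same downset computation, but for strictness you must produce an element of $N_G(v_1)\setminus N_G(v_2)$, and you do so by taking the least power $u^k$ (where $v_2=v_1u$) that falls out of $N_G(v_2)$, which exists only because $X$ is finite. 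The trade-off is clear: your version yields the sharper conclusion that the neighbourhood of a multiple is strictly contained in the neighbourhood of any proper factor, at the cost of importing the finiteness of the Ap\'ery set; the paper's injection $z\mapsto zw$ plus the single extra element $w$ gets the degree inequality with no finiteness input and in one line. Both hinge on Lemma~\ref{lem downset}, and all the small verifications in your write-up (that $u\in X$, that $u^k\in X$ when $k\ge 2$ via $v_1u^k=v_2u^{k-1}\in X$, and that the chain $v_2u^j$ cannot stay in $X$ forever) are sound.
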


\begin{proof} Let $w \in V$ be such that $v_2=v_1w$. Let $t = \deg(v_2)$ and denote $N_G(v_2) = \{z_1,\dots,z_t\}$.
Since $z_iv_2=z_iwv_1 \in X$ for all $i$ by hypothesis, and since $X$ is a downset, it follows that
$$
\{w,z_1,\dots,z_t, z_1w, \dots, z_tw\} \subseteq N_G(v_1).
$$
That set is of cardinality at least $t+1$ since $w,z_1w,\dots,z_tw$ are pairwise distinct. Whence $\deg(v_1) \ge t+1$, as desired.
\end{proof}

\begin{corollary}\label{V subset P} All vertices in $G$ of maximal degree belong to $V \cap P$. Moreover, for any $r \ge 1$, the subset of vertices of $G$ of degree $r$ forms an antichain under divisibility. \hfill $\Box$
\end{corollary}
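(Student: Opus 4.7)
Both statements should follow quickly from Proposition~\ref{degrees} together with the downset property of $V$ established in Lemma~\ref{V and X cap D}. My plan is to treat them in turn.

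For the first assertion, I would take any vertex $v \in V$ of maximal degree and argue by contradiction that $v$ cannot lie in $V \cap D$. If $v \in V \cap D$, then $v$ is decomposable in $S^*$, so $v=ab$ with $a,b \in S^*$ both different from $1$. In particular, $a$ is a proper factor of $v$. Since $V$ is a downset by Lemma~\ref{V and X cap D} and $v \in V$, we get $a \in V$. Applying Proposition~\ref{degrees} to the pair $a \neq v$ with $a$ dividing $v$ yields $\deg(a) > \deg(v)$, contradicting the assumed maximality of $\deg(v)$. Hence $v \in V \cap P$.

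For the second assertion, fix $r \ge 1$ and let $v_1, v_2$ be two distinct vertices of $G$ of degree $r$. If there were a divisibility relation between them, say $v_1$ divides $v_2$ (the other case being symmetric), then Proposition~\ref{degrees} would give $\deg(v_1) > \deg(v_2)$, contradicting $\deg(v_1) = r = \deg(v_2)$. Hence no such relation exists, and the degree-$r$ vertices form an antichain under divisibility.

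I do not anticipate any real obstacle: the whole argument is a direct two-line consequence of Proposition~\ref{degrees}, using only that proper factors of elements of $V$ stay in $V$ (downset property) to ensure that the inequality in Proposition~\ref{degrees} may actually be invoked inside $G$.
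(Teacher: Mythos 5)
Your proposal is correct and follows exactly the route the paper intends: the corollary is stated with a $\Box$ as an immediate consequence of Proposition~\ref{degrees}, and your two applications of that proposition (using the downset property of $V$ from Lemma~\ref{V and X cap D} to ensure the proper factor $a$ is itself a vertex) are precisely the intended argument. No gaps.
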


\subsection{On loopy and nonloopy vertices}
\begin{definition} Let $z \in S^*$. We define the \emph{length} of $z$ to be the largest integer $t \ge 1$ such that $z=x_1\dots x_t$ with $x_1,\dots,x_t \in S^*$. We then write $t = \len(z)$. 
\end{definition}
In particular, $\len(z)=1$ if and only if $z \in P$. Since $X$ is a downset, it follows that if $z \in X$, then $\len(z)$ coincides with the largest integer $t \ge 1$ such that $z=x_1\dots x_t$ with $x_1,\dots,x_t \in X$. 

\begin{proposition}
All vertices in $V \cap D$ of maximal length are nonloopy.
\end{proposition}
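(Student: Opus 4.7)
The plan is to argue by contradiction, producing from any loopy $v \in V \cap D$ of maximal length a vertex of strictly greater length in $V \cap D$.

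Suppose $v \in V \cap D$ is loopy and has maximal length among the elements of $V\cap D$; set $t=\len(v)$. Since $v \in D$ we have $t \ge 2$, and by maximality of the length we may write $v = x_1 \cdots x_t$ with $x_1,\dots,x_t \in S^*$. Because $v \in X$ and $X$ is a downset (Lemma~\ref{lem downset}), in fact $x_1,\dots,x_t \in X$. The assumption that $v$ is loopy means that $\{v,v\}$ is an edge of $G$, i.e.\ $v^2 \in X$; since $v^2 = v \cdot v$ with $v \in S^*$, we also have $v^2 \in D$, so $v^2 \in X \cap D$.

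Now consider the element $u = v\,x_1 = x_1^2\,x_2\cdots x_t$. First, $u$ is a proper factor of $v^2 = v \cdot x_1 \cdots x_t$: it divides $v^2$, and $u \ne v^2$ because the equality $u = v^2$ would force $x_1 = v$, hence $x_2 \cdots x_t = 1$, which is impossible since $t \ge 2$ and the $x_i$ lie in $X$. Applying Lemma~\ref{V and X cap D}, the fact that $u$ is a proper factor of $v^2 \in X \cap D$ gives $u \in V$. Moreover $u$ is obviously decomposable, so $u \in V \cap D$. Finally, $u = x_1 \cdot x_1 \cdot x_2 \cdots x_t$ exhibits $u$ as a product of $t+1$ elements of $S^*$, so $\len(u) \ge t+1 > \len(v)$. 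This contradicts the maximality of $\len(v)$ in $V \cap D$, completing the proof.

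The only delicate point is verifying that $u$ is a \emph{proper} (not improper) factor of $v^2$, since Lemma~\ref{V and X cap D} only delivers vertex membership for proper factors; this is exactly where the hypothesis $v \in D$ (hence $t \ge 2$) is used. Everything else is a direct appeal to the downset property of $X$ and the length bookkeeping.
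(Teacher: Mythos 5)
Your proposal is correct and follows essentially the same route as the paper: the paper writes the loopy vertex $u$ of maximal length as $u=xv$ with $x$ a primitive proper factor and observes that $xu$ is a proper factor of $u^2\in X$ of length $\ge t+1$, which is exactly your element $v\,x_1$. Your extra care in checking that $v x_1$ is a \emph{proper} factor of $v^2$ (using $t\ge 2$) is a sound, if implicit in the original, detail.
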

\begin{proof}
Let $u \in V \cap D$ be of maximal length, say $t \ge 2$. Let $x \in V \cap P$ be a proper factor of $u$, say $u=xv$ with $v \in X$. Assume for a contradiction that $u$ is loopy. Then $u^2 \in X$. Since $u^2=xvu$ and $v \in X$, it follows that $xu \in V \cap D$ and $\len(xu) \ge t+1$. This contradicts the maximality of $t$. Therefore $u$ is a nonloopy vertex of $G$, as claimed.
\end{proof}

\begin{corollary} If all vertices in $G$ are loopy, then $V \cap D = \emptyset$, i.e. $V \subset P$. \hfill $\Box$
\end{corollary}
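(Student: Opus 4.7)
The plan is to deduce the corollary directly from the preceding proposition by a contrapositive argument. Suppose $V \cap D \neq \emptyset$; the goal is to exhibit a nonloopy vertex in $G$. Since $V \subseteq X$ and $X$ is finite (as $X \subseteq [m, c+m[$ in additive notation), the set $V \cap D$ is finite and nonempty. The length function $\len \colon S^* \to \N_+$ therefore attains a maximum on $V \cap D$; let $u \in V \cap D$ be any vertex realizing this maximum. By the preceding proposition, every vertex in $V \cap D$ of maximal length is nonloopy, so $u$ is nonloopy. Hence $G$ contains at least one nonloopy vertex.

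Taking the contrapositive, if all vertices of $G$ are loopy, then $V \cap D = \emptyset$, and therefore $V \subseteq P$ by the partition $V = (V \cap P) \sqcup (V \cap D)$. No serious obstacle arises here: the entire content is packaged in the preceding proposition, and the corollary amounts to observing that the maximum of $\len$ over $V \cap D$ is attained whenever $V \cap D$ is nonempty, which is automatic from finiteness.
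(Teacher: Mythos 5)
Your argument is correct and is exactly the one the paper intends (the corollary is stated with an omitted proof precisely because it follows this way): if $V \cap D$ were nonempty, a vertex of maximal length in it exists by finiteness and is nonloopy by the preceding proposition, contradicting the hypothesis. Nothing further is needed.
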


\begin{lemma}\label{nonloopy} Let $y \in V$ be a nonloopy vertex. Then $y$ divides none of its neighbors in $G$.
\end{lemma}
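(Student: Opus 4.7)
The plan is to proceed by contradiction, exploiting the fact that $X$ is a downset (Lemma~\ref{lem downset}). Recall that $y$ being nonloopy means precisely that $\{y,y\}$ is not an edge of $G$, equivalently $y^2 \notin X$. Suppose, toward a contradiction, that $y$ divides some neighbor $z \in N_G(y)$. By definition of $N_G(y)$, we have $yz \in X$, and by assumption there exists $w \in S$ such that $z = yw$.

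First I would dispose of the degenerate case $w = 1$: if $z = y$, then $yz = y^2$ would lie in $X$, directly contradicting the nonloopy hypothesis on $y$. Hence $w \in S^*$, and in particular $y^2 \neq yz$.

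Next I would invoke the downset property. Writing $yz = y^2 w$ with $w \in S^*$ and $y^2 \neq yz$, we see that $y^2$ is a proper factor of $yz$ in the sense of the definition preceding Lemma~\ref{lem downset}. Since $yz \in X$ and $X \subseteq S^*$ is a downset, every proper factor of $yz$ lies in $X$; in particular $y^2 \in X$. This contradicts $y$ being nonloopy and completes the proof.

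I do not anticipate any real obstacle here: the whole argument is a one-line application of the downset property of $X$ once the degenerate case $z = y$ is eliminated. The only mild point requiring care is that the definition of \emph{proper factor} requires the complementary factor to lie in $S^*$ rather than merely in $S$, which is exactly what forces the preliminary verification that $w \neq 1$.
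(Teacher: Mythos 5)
Your proof is correct and follows essentially the same route as the paper's: write the neighbor as $z=yw$, observe $yz=y^2w\in X$, and conclude $y^2\in X$ from the downset property of $X$, contradicting nonloopiness. Your explicit handling of the degenerate case $w=1$ is harmless extra care (it cannot occur, since $y\in N_G(y)$ would already mean $y^2\in X$), and the paper simply assumes $z'\neq 1$ from the start.
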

\begin{proof} Let $z \in N_G(y)$ such that $z=yz'$ with $z' \not=1$. We have $yz \in X$ since $y,z$ are neighbors. Hence $y^2z' \in X$, implying $y^2 \in X$ and thus contradicting that $y$ is a nonloopy vertex.
\end{proof}

\begin{lemma}\label{factor of loopy} Every proper factor of a loopy vertex is loopy.
\end{lemma}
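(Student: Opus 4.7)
The plan is to combine the downset property of $X$ (Lemma~\ref{lem downset}) with the downset property of $V$ (Lemma~\ref{V and X cap D}). Unpacking the definitions, a loopy vertex $u$ is simply an element of $V$ satisfying $u^2 \in X$, so to show a proper factor $v$ of $u$ is loopy I must show two things: $v \in V$, and $v^2 \in X$.

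First I would invoke Lemma~\ref{V and X cap D}, which says $V$ is a downset under divisibility. Since $v$ is a proper factor of $u \in V$, this immediately gives $v \in V$. Next, write $u = vw$ with $w \in S^*$ and $w \neq 1$ (which is possible because $v$ is a \emph{proper} factor). Squaring yields $u^2 = v^2 w^2$, and since $w^2 \in S^*$ with $w^2 \neq 1$ (using the simplification property $w^2 = 1 \Rightarrow w = 1$ inherited from $S_0 \subseteq \N$), the element $v^2$ is a proper factor of $u^2$. Because $u^2 \in X$ by hypothesis and $X$ is a downset by Lemma~\ref{lem downset}, we conclude $v^2 \in X$, which says exactly that $\{v,v\}$ is a loop in $G$ and hence $v$ is loopy.

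There is no real obstacle here; the argument is essentially bookkeeping once one notes that squaring preserves the proper-factor relation, which reduces the loopiness of a factor to the downset closure of $X$. The only subtle point worth flagging is the need to verify $v^2 \neq u^2$ (so that $v^2$ really is a \emph{proper} factor), which follows from $v \neq u$ together with the cancellation law $v^2 = u^2 \Rightarrow v = u$ valid in the multiplicative numerical semigroup $S$.
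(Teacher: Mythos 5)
Your proof is correct and follows essentially the same route as the paper: reduce loopiness of the factor $v$ to the downset property of $X$ applied to $v^2 \mid u^2$. The extra care you take (verifying $v \in V$ via the downset property of $V$, and that $v^2$ is a \emph{proper} factor of $u^2$) is just a more explicit rendering of the paper's one-line argument.
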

\begin{proof} Let $u \in V$ and assume that $u$ is loopy. Hence $u^2 \in X$. Let $v \in V$ be a proper factor of $u$. Since $X$ is stable under taking proper factors, it follows that $v^2 \in X$. Whence $v$ is loopy.
\end{proof}

\begin{lemma}\label{unique loopy} If $\lambda(G)=1$, then the unique loopy vertex $u \in V$ is primitive.
\end{lemma}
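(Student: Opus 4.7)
The plan is to argue by contradiction: assume the unique loopy vertex $u$ is decomposable, then produce a second loopy vertex, contradicting $\lambda(G)=1$.

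First I would record the basic observation that $u$ is itself an Apéry element: since $u$ is loopy we have $u^2 \in X$, and $u$ is a proper factor of $u^2$, so $u \in X$ because $X$ is a downset by Lemma~\ref{lem downset}. In particular $u \in V$.

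Next, assume for contradiction that $u \in V \cap D$. Then $u$ is decomposable in $S^*$, so it admits a factorization $u = xv$ with $x,v \in S^*$ and $x,v \neq u$ (using cancellation in the multiplicative semigroup). Thus $x$ is a proper factor of $u$. By Lemma~\ref{V and X cap D}, $V$ is a downset, so $x \in V$. Now Lemma~\ref{factor of loopy} applies: every proper factor in $V$ of a loopy vertex is loopy, hence $x$ is a loopy vertex distinct from $u$. This exhibits two loopy vertices, contradicting $\lambda(G) = 1$.

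Therefore $u \notin V \cap D$, and since $V = (V \cap P) \sqcup (V \cap D)$, we conclude $u \in V \cap P$, i.e.\ $u$ is primitive. The argument is essentially immediate once the downset nature of $V$ (Lemma~\ref{V and X cap D}) and the inheritance of loopiness by proper factors (Lemma~\ref{factor of loopy}) are in hand; I do not anticipate any real obstacle here, as all the technology has been assembled in the preceding subsections.
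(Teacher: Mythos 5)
Your proof is correct and follows essentially the same route as the paper: assume $u$ is decomposable, pass to a proper factor, and use the downset property of $X$ (packaged in your case as Lemma~\ref{factor of loopy} together with the downset property of $V$) to exhibit a second loopy vertex, contradicting $\lambda(G)=1$. The paper carries out the same contradiction directly via Lemma~\ref{lem apery summand} and the observation $a^2\in X$, so the two arguments differ only in which preparatory lemmas are cited.
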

\begin{proof} We have $u^2 \in X$ since $u$ is loopy. If $u \in D$, then $u=ab$ with $a,b \in X$. Therefore $a^2 \in X$, so that $a$ is also a loopy vertex, and we are done since $a \not= u$.
\end{proof}

\subsection{More on $V \cap P$ and $V \cap D$}

\begin{proposition}\label{card V cap D = 1} We have $|V \cap D| \ge \deg(u)$ for all $u \in V \cap D$. If $V \cap D = \{u\}$, then $N_G(u)=\{x\}$ for some $x \in V \cap P$, and $u=x^2$. 
\end{proposition}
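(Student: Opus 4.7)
The plan is to leverage the multiplicative notation by pushing the neighbors of $u$ into $V\cap D$ via multiplication by a fixed primitive divisor of $u$; this supplies an injection $N_G(u)\hookrightarrow V\cap D$ that proves the inequality, and the injection can then be unwound to force the second statement.

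For the first claim, fix $u\in V\cap D$ with $\deg(u)=t$ and $N_G(u)=\{z_1,\dots,z_t\}$. Since $u\in D$, choose any primitive divisor $a\in P$ of $u$ and write $u=ab$ with $b\in S^*$; by Lemma~\ref{lem downset}, $b\in X$. For each $i$, the edge $\{u,z_i\}$ gives $uz_i\in X$, and factoring as $uz_i=(az_i)b$ shows via the downset property that $az_i\in X$. Then $\{az_i,b\}$ is an edge of $G$ (with loops permitted if $az_i=b$), so $az_i\in V$, and $az_i\in D$ since it is a product of two elements of $S^*$. Cancellation in $S$ gives that $az_1,\dots,az_t$ are pairwise distinct, whence $|V\cap D|\ge t=\deg(u)$.

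For the second claim, assume $V\cap D=\{u\}$. The first part forces $t=\deg(u)=1$, and we write $N_G(u)=\{x\}$. Now reuse the construction with \emph{any} primitive divisor $a$ of $u$: we get $ax\in V\cap D=\{u\}$, hence $ax=u=ab$, so $x=b$ and $a=u/x$ is forced. Varying $a$, this shows that $u$ admits a \emph{unique} primitive divisor, namely $u/x$. Because $\{u,x\}$ is an edge, $x\in V$; and $x\notin D$, for otherwise $x\in V\cap D=\{u\}$ gives $u=ax=au$, forcing $a=1\notin S^*$. Hence $x\in V\cap P$, so $x$ is itself a primitive divisor of $u$. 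By uniqueness, $x=a=u/x$, i.e. $u=x^2$.

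The argument is a clean application of the downset property combined with cancellation. The one subtle point, which is really where the work happens, is to notice that the inequality proof must be \emph{reapplied with an arbitrary} primitive divisor $a$ of $u$ in order to deduce that $u$ has only one primitive divisor; this is the step that pins down $u=x^2$ rather than merely a higher power of $x$.
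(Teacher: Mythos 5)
Your proof is correct. The first half is essentially the paper's own argument: multiply each neighbor $z_i$ of $u$ by a fixed proper factor of $u$ and use the downset property of $X$ plus cancellation to inject $N_G(u)$ into $V\cap D$ (the paper uses an arbitrary factorization $u=wv$ rather than insisting that the chosen factor be primitive, but nothing changes). The second half takes a genuinely different, though equally elementary, route. The paper observes that all three length-two subproducts $z_1w$, $z_1v$, $wv$ of $z_1wv\in X\cap D$ lie in $V\cap D=\{u\}$, hence are equal, and cancellation immediately yields $z_1=w=v$ and $u=z_1^2$; primitivity of $z_1$ then follows because a proper factor of $z_1$ would put $z_1$ itself into $V\cap D$. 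You instead let the primitive divisor $a$ of $u$ vary, deduce from $ax=u$ that $u$ has a unique primitive divisor $u/x$, check separately that $x$ is primitive and divides $u$, and conclude $x=u/x$. Both arguments rest on the same two tools (downset property and cancellativity); the paper's version is slightly more direct in that it extracts all the information from a single factorization, while yours isolates the cleaner intermediate statement that $u$ has a unique primitive divisor, which is perhaps more transparent as to \emph{why} $u$ must be a square of a primitive rather than, say, a product of two distinct primitives.
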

\begin{proof} Let $u \in V \cap D$. We have $u=wv$ for some $w \in V$. Let $t = \deg(u)$ and denote
$$
N_G(u) = \{z_1,\dots,z_t\}.
$$
Since $z_iu=z_iwv \in X \cap D$ for all $i$ by hypothesis, it follows that 
$$
\{z_1w, \dots, z_tw\} \subseteq V \cap D,
$$
whence $|V \cap D| \ge t$. Assume now $V \cap D = \{u\}$ with $u=wv$ as above. Since $|V \cap D|=1$, it follows from the above that $t=1$, whence $N_G(u)=\{z_1\}$. Thus $z_1wv \in X \cap D$, implying $\{z_1w, z_1v, wv\} \subseteq V \cap D$. Therefore $z_1w = z_1v = wv$, whence $z_1=w=v$ and $u=z_1^2$. Moreover $z_1 \in P$, for if $z_1$ had proper factors in $V$, this would imply $z_1 \in V \cap D$, contradicting the equality $V \cap D= \{z_1^2\}$.
\end{proof}

\begin{proposition}\label{large diff} We have $|X \cap D| \le |E(G)|-\deg(u)$ for all $u \in V \cap D$ such that $u \not= x^2$ with $x \in P$. 
\end{proposition}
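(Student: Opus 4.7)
The plan is to apply Proposition~\ref{prop fibers}, according to which
$$|X \cap D| \ = \ |E(G)| - \sum_{z \in X \cap D}\bigl(|\wt^{-1}(z)|-1\bigr).$$
It will suffice to exhibit, for each neighbor $z_i \in N_G(u)$, a second edge of $G$ with the same weight $z_i u$ as the obvious edge $\{z_i, u\}$. Since the $t := \deg(u)$ weights $z_1 u, \ldots, z_t u$ are pairwise distinct elements of $X \cap D$ by cancellation, this will contribute at least $t$ to the fiber excess and yield $|X \cap D| \le |E(G)| - \deg(u)$.

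First I would pick a factorization $u = wv$ with $w, v \in X \setminus \{1\}$ and, crucially, $w \ne v$. This is where the hypothesis $u \ne x^2$ with $x \in P$ comes in: if $u$ is not a square, any $2$-factorization already has distinct factors; if $u = y^2$, then by hypothesis $y \in D$, so writing $y = y_1 y_2$ nontrivially produces the factorization $u = y_1 \cdot (y_1 y_2^2)$, whose factors are distinct and both lie in $X$ by the downset property (Lemma~\ref{lem downset}).

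Next, for each neighbor $z_i$, I would propose the two candidate edges $\{z_i w, v\}$ and $\{w, z_i v\}$. Both are edges of $G$: their endpoints lie in $X$ because each divides $z_i u \in X$ and $X$ is a downset, while their respective products both equal $z_i u \in X$. A direct comparison of sets shows that $\{z_i w, v\} = \{z_i, u\}$ forces $v = z_i$, and $\{w, z_i v\} = \{z_i, u\}$ forces $w = z_i$. Since $w \ne v$, these two equalities cannot both hold, so at least one of the candidates is a genuine second edge with weight $z_i u$.

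The main subtlety to address is that the selected alternative edge may collapse to a loop (e.g.\ if $z_i w = v$). Since loops are admissible in $G$ and are counted by $\wt$ just like true edges, a short verification shows that such a loop remains distinct from $\{z_i, u\}$ as a subset of $V(G)$ (using that $w \ne 1 \ne v$), so the construction still delivers two distinct elements of $\wt^{-1}(z_i u)$. Concluding, $|\wt^{-1}(z_i u)| \ge 2$ for every $i$, the fiber excess in Proposition~\ref{prop fibers} is at least $\deg(u)$, and the claimed inequality follows.
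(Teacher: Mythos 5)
Your proof is correct and follows essentially the same route as the paper: both invoke Proposition~\ref{prop fibers} and, for each neighbor $z_i$ of $u$, exhibit a second edge of weight $z_iu$ by swapping a factor of $u$ across the edge, the hypothesis $u\neq x^2$ ($x\in P$) being used exactly to secure a factorization $u=wv$ with $w\neq v$. The only cosmetic difference is that the paper fixes a \emph{primitive} factor $x$ of $u$ (so $u=wx$ with $w\neq x$) and notes that $\{z_ix,w\}$ and $\{x,z_iw\}$ are already distinct from each other since only the latter contains $x$, whereas you compare your two swapped edges against $\{z_i,u\}$; both variants are valid.
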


\begin{proof} Let $u \in V \cap D$ be such that $u \not= x^2$ with $x \in P$. 
Let $x \in V \cap P$ be a primitive factor of $u$, so that $u=wx$ for some $w \in V$ with $w \not=x$. Set $t=\deg(u)$ and $N_G(u)=\{z_1,\dots,z_t\}$. Then $z_iu=z_iwx \in X \cap D$ for all $i$. For all $i$, the edges $\{z_ix,w\}$ and $\{x,z_iw\}$ are distinct since $x \notin \{z_ix,w\}$ but have the same weight $z_iwx$. Since $z_iwx \not= z_jwx$ for $i \not=j$, it follows from Proposition~\ref{prop fibers} that $|X \cap D| \le |E(G)|-\deg(u)$ as desired.
\end{proof}

\begin{proposition}\label{leaf} If $|X \cap D|=|E(G)|$, then any edge $\{u,v\}$ not contained in $V \cap P$ is of the form $\{x,x^2\}$ with $x \in V \cap P$ and  $x^2$ a leaf with unique neighbor $x$. 
\end{proposition}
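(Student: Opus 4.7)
The plan is to exploit the bijectivity of the weight map. By Proposition~\ref{prop fibers}, the hypothesis $|X \cap D|=|E(G)|$ forces every fiber $\wt^{-1}(z)$ to have cardinality exactly one; equivalently, no two distinct edges of $G$ share the same weight. So the strategy is, given an edge $\{u,v\}$ not contained in $V \cap P$, to produce a second edge with the same weight whenever $\{u,v\}$ fails the conclusion, contradicting injectivity of $\wt$.

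First I would apply Proposition~\ref{large diff}. Assuming $u \in V \cap D$, that proposition forces $u = x^2$ for some $x \in V \cap P$; otherwise $|X\cap D|\le |E(G)|-\deg(u)<|E(G)|$. Next I would rule out $v \in V \cap D$: in that case $v = y^2$ for some $y \in V \cap P$, and since the weight $x^2y^2$ lies in $X$, its proper factor $xy$ also lies in $X$ by Lemma~\ref{lem downset}. Then $\{xy,xy\}$ is a loop of $G$ with the same weight as $\{x^2,y^2\}$; these edges are genuinely distinct (if $x=y$ one uses instead the edge $\{x,x^3\}$, of weight $x^4$, against the loop $\{x^2,x^2\}$). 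Hence $v \in V \cap P$.

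To conclude that $v = x$ and that $x$ is the unique neighbor of $x^2$, I would repeat the ``shadow edge'' trick. If $v \neq x$, then $xv$ is a proper factor of $x^2v \in X$, so $xv \in X$ by Lemma~\ref{lem downset}, and $\{x,xv\}$ is an edge of weight $x^2v$ equal to the weight of $\{x^2,v\}$. The unordered pairs $\{x,xv\}$ and $\{x^2,v\}$ coincide only if $v=x$ (using the simplification properties $xy=xz\Rightarrow y=z$ and $x^2=y^2\Rightarrow x=y$ recalled in the subsection on multiplicative notation), so injectivity of $\wt$ forces $v = x$. The same device handles leafhood: for any neighbor $w$ of $x^2$ with $w \neq x$, the edges $\{x^2,w\}$ and $\{x,xw\}$ are distinct, both of weight $x^2w$, again a contradiction. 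Therefore $N_G(x^2)=\{x\}$.

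There is no serious obstacle here; the content is packaged in Proposition~\ref{large diff} and the downset property of $X$. The only point requiring care is verifying, in each use of the ``shadow edge'' trick, that the two edges one produces are genuinely distinct as unordered pairs; this is where the cancellation properties of $S$ (viewed multiplicatively) are invoked, and where one must separately treat the degenerate case $x=y$ when both endpoints of the original edge would be squares.
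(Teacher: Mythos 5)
Your proof is correct, and its engine is the same as the paper's: the hypothesis $|X \cap D|=|E(G)|$ makes the weight map injective via Proposition~\ref{prop fibers}, and every step manufactures a second, genuinely distinct edge of the same weight to reach a contradiction. Where you diverge is in the opening move. The paper writes the decomposable endpoint as $v_1v_2$ and observes that $\{u,v_1v_2\}$, $\{v_1,uv_2\}$ and $\{v_2,uv_1\}$ all have weight $uv_1v_2$, so they must coincide, forcing $u=v_1=v_2$ in one stroke; primitivity of $u$ is then obtained by exhibiting two distinct edges of weight $u_1^3u_2^3$. You instead import Proposition~\ref{large diff}: since every vertex of $G(S)$ has degree at least one, any $u \in V\cap D$ not of the form $x^2$ with $x \in P$ would give $|X\cap D| \le |E(G)|-\deg(u) < |E(G)|$, so $u=x^2$ immediately (and $x \in V$ because $V$ is a downset by Lemma~\ref{V and X cap D}). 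This forces you to dispose separately of the case where both endpoints are decomposable, via the loop $\{xy,xy\}$ against $\{x^2,y^2\}$ and the edge $\{x,x^3\}$ against the loop at $x^2$ in the degenerate case --- a case the paper's three-edge identity absorbs automatically. Both routes are sound; yours recycles work already done in Proposition~\ref{large diff} at the cost of an extra case split, and your explicit verification that each pair of competing edges is distinct as unordered pairs (using the cancellation properties of $S$) is exactly the point where care is needed in either version.
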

\begin{proof} By Proposition~\ref{prop fibers}, the hypothesis $|X \cap D|=|E(G)|$ implies that \emph{distinct edges have distinct weights}. Let $\{u,v_1v_2\}$ be an edge with $v_1,v_2 \in V$. Thus $uv_1v_2 \in X$, so that $\{u,v_1v_2\}$, $\{v_1,uv_2\}$ and $\{v_2,uv_1\}$ are all edges in $G$ with same weight $uv_1v_2$. Hence these edges coincide, so that $u=v_1=v_2$ and the edge is $\{u,u^2\}$. Thus $u^3 \in X$. Now if $u$ were not primitive, say if $u=u_1u_2$ with $u_1,u_2 \in V$, then $u_1^3u_2^3 \in X$, and this would yield at least two distinct edges with same weight, e.g. $\{u_1, u_1^2u_2^3\}$ and $\{u_1^2, u_1u_2^3\}$. Hence $u \in V \cap P$, as claimed. Finally, let $v \in V$ be a neighbor of $u^2$. Then $u^2v\in X$, yielding two edges with same weight, namely $\{u,uv\}$ and $\{u^2,v\}$. Hence $\{u,uv\}=\{u^2,v\}$, implying $u=v$. Thus $N_G(u^2)=\{u\}$, as claimed.
\end{proof}

\section{Proof of main theorem}\label{section main}

Let $S$ be a numerical semigroup in multiplicative notation, arising from a classical numerical semigroup $S_0 \subseteq \N$ via the isomorphism~\eqref{iso}. The following notation will be used throughout Section~\ref{section main}. 

\begin{notation}\label{nota} The symbols $m,c,q,\rho,P,D,L,X$ usually associated to $S_0$ will also denote the corresponding objects in $S$ transported from $S_0$ via~\eqref{iso}. Further, we denote $G(S)=G=(V,E)$ the graph associated to $S$, and we set
\begin{equation}\label{n,k,nu}
n=|V|,\,\, k=\vm(G),\,\, \nu=\nu(G), \,\, \lambda=\lambda(G).
\end{equation}
\end{notation}
Note that by definition, we have $\lambda \le k \le n$. This section is devoted to proving Theorem~\ref{thm main}. The proof is divided into several cases and subcases depending mainly on the values of $k$ and $\lambda$. Recall that Wilf's conjecture has been shown to hold when $|P| \le 3$ or $q \le 3$, in~\cite{FGH} and~\cite{E}, respectively. Therefore, throughout the proof, we freely assume $|P| \ge 4$ and $q \ge 4$, even though these hypotheses may be dispensed of in most subcases.

\subsection{A reduction}
We first reduce the proof of Theorem~\ref{thm main} to the case $\tau(X) \le 2q-1$.

\begin{lemma} If Wilf's conjecture holds in case $\tau(X) \le 2q-1$, then Wilf's conjecture holds in case $|P| \ge m/3$.
\end{lemma}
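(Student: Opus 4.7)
The plan is to split along the two cases $\tau(X) \le 2q-1$ and $\tau(X) \ge 2q$. In the first case, Wilf's conjecture holds by the hypothesis of the lemma, so nothing remains to be shown. All the work is in the second case, where I would argue directly from the formula
$$W(S) \ = \ |P|\tau(X) - |X \cap D|q + \rho$$
of Corollary~\ref{prop w(s)}.

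The key observation is that the hypothesis $|P| \ge m/3$ controls $|X \cap D|$ via Proposition~\ref{on X}. Indeed, $|X \cap D| = m - |P|$, so
$$|X \cap D| \ = \ m-|P| \ \le \ m - m/3 \ = \ 2m/3 \ \le \ 2|P|.$$
Combining this with the assumption $\tau(X) \ge 2q$ yields
$$|P|\tau(X) \ \ge \ 2q|P| \ \ge \ q|X \cap D|,$$
and substituting back into the formula above gives $W(S) \ge \rho \ge 0$, so Wilf's conjecture holds in this case as well.

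There is no real obstacle here: the reduction is essentially a one-line numerical estimate, and the role of the lemma is just to isolate the genuinely difficult regime $\tau(X) \le 2q-1$, where the graph-theoretic machinery of Sections~\ref{section graph} and~\ref{section properties} (in particular the lower bound on $\tau(X)$ from Theorem~\ref{thm formula}) will have to do the real work. The only thing to be careful about is that both inequalities $|X \cap D| \le 2|P|$ and $\tau(X) \ge 2q$ are used with the correct sign so that $\rho \ge 0$ is enough to conclude.
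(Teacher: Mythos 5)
Your proof is correct and takes essentially the same approach as the paper: since $|L|=q+\tau(X)$, your case split $\tau(X)\ge 2q$ versus $\tau(X)\le 2q-1$ is exactly the paper's split $|L|\ge 3q$ versus $|L|\le 3q-1$, and your estimate via $W(S)=|P|\tau(X)-|X\cap D|q+\rho$ together with $|X\cap D|=m-|P|\le 2|P|$ is just an algebraic repackaging of the paper's direct computation $|P||L|\ge (m/3)(3q)=mq=c+\rho$.
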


\begin{proof} We have $W(S) = |P||L|-c=|P||L|-qm+\rho$. Assume $|P| \ge m/3$. 

\smallskip
\noindent
\textbf{Case I.} Assume $|L| \ge 3q$. Then $|P||L| \ge (m/3)(3q)=mq = c+\rho$. Therefore $W(S) \ge \rho$ and we are done.  

\smallskip
\noindent
\textbf{Case II.} Assume $|L| \le 3q-1$. Since $|L|=q+\tau(X)$, it follows that $\tau(X) \le 2q-1$. Since Wilf's conjecture is assumed to hold in this case, the proof is complete.
\end{proof}

\begin{proposition}\label{k le 4} If $\tau(X) \le 2q-1$ and $q \ge 4$, then $k \le 4$.
\end{proposition}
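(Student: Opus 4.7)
The plan is to derive this bound directly from the key inequality already established in Theorem~\ref{thm formula}, namely
\[
\tau(X) \;\ge\; \bigl(k(q-1)+\nu\bigr)/2 + (n-k).
\]
Since $\nu \ge 0$ and $n \ge k$ by definition of $k = \vm(G)$ and $n = |V(G)|$, the two ``extra'' terms $\nu/2$ and $n-k$ are non-negative and can simply be dropped to yield the cleaner bound $\tau(X) \ge k(q-1)/2$.

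Plugging in the hypothesis $\tau(X) \le 2q-1$, I would obtain $k(q-1)/2 \le 2q-1$, i.e. $k(q-1) \le 4q-2 = 4(q-1)+2$, so
\[
k \;\le\; 4 + \frac{2}{q-1}.
\]
Since $q \ge 4$, the fractional term satisfies $2/(q-1) \le 2/3 < 1$, and as $k$ is an integer this forces $k \le 4$, completing the proof.

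There is essentially no obstacle here: this is a one-line consequence of Theorem~\ref{thm formula} obtained by throwing away the non-negative terms $\nu/2$ and $n-k$. The only thing to be careful about is the rounding step at the very end, which relies crucially on the assumption $q \ge 4$ (for $q = 3$ one would only get $k \le 5$, and for $q = 2$ the bound would be vacuous). The hypothesis $q \ge 4$ is harmless in the context of the paper because, as noted in the paragraph preceding the statement, Wilf's conjecture has already been established for $q \le 3$ in~\cite{E}.
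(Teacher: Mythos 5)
Your proof is correct and is essentially the paper's argument: both drop the non-negative terms $\nu/2$ and $n-k$ from Theorem~\ref{thm formula} to get $\tau(X) \ge k(q-1)/2$ and then combine this with $\tau(X) \le 2q-1$ and $q \ge 4$. The paper phrases the last step as a contradiction for $k \ge 5$ rather than solving for $k$, but the computation is identical.
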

\begin{proof}
We have $2q-1 \ge \tau(X) \ge k(q-1)/2$. If $k \ge 5$, then $2q-1 \ge 5(q-1)/2$, implying $3 \ge q$, contrary to our assumption $q \ge 4$.
\end{proof}

Thus, we need only examine the cases $k=0,1,2,3,4$ to complete the proof of Theorem~\ref{thm main}, i.e. that $W(S) \ge 0$ in all cases under consideration. We start with $0 \le k \le 2$.

\subsection{Proof in cases $k=0,1,2$}

\vspace{0.25cm}
\noindent
\fbox{\textbf{Case $k=0$}.} Then $E=\emptyset$ and so $|X \cap D|=0$. Hence $W(S) \ge |P|\tau(X)+\rho \ge 0$.

\vspace{0.25cm}
\noindent
\fbox{\textbf{Case $k=1$}.} Then $G$ consists of exactly one loopy vertex, so $n=k=|X \cap D|=1$. Hence $\tau(X) \ge (q-1+\nu)/2$, yielding
\begin{eqnarray*}
W(S) & = & |P|\tau(X)-|X \cap D|q+\rho \\
& \ge & 4(q-1+\nu)/2-q+\rho \\
& \ge & q-2+2\nu+\rho,
\end{eqnarray*}
and so $W(S) \ge 2$ since $q \ge 4$ by assumption.

\vspace{0.25cm}
\noindent
\fbox{\textbf{Case $k=2$}.} Then $n \ge 2$ and $G$ has at most two loops, i.e. $0 \le \lambda \le 2$. By Theorem~\ref{thm formula}, we have $\tau(X) \ge q-1+\nu/2+(n-2)$, whence
\begin{equation}\label{eq k=2}
W(S) \ge |P|(q-1+\nu/2+(n-2))-|X \cap D|q+\rho.
\end{equation}

\smallskip
Assume first $|X \cap D| \le 3$. Then using $|P| \ge 4$, we have
\begin{eqnarray*}
W(S) & \ge & 4(q-1+\nu/2+(n-2))-3q+\rho \\
& = & q +4(n-3) + 2\nu + \rho.
\end{eqnarray*}
Since $n \ge 2$ and $q \ge 4$, this yields $W(S) \ge 0$ and we are done.

\smallskip
Assume now $|X \cap D| \ge 4$. Then $n \ge 3$. 

$\bullet$ The case $\lambda = 2$ cannot occur here since it would imply $n = 2$.

$\bullet$ If $\lambda = 1$, let $x \in V$ be the sole loopy vertex. Since $k < 3$, all true edges are incident to $x$. Thus all edges of $G$ are of the form $\{x,u\}$ with $u \in V$, and $|E|=|V|=n$. Since $x$ is of largest degree, namely $n$, it follows that $x \in V \cap P$ by Corollary~\ref{V subset P}. Since all edges are pairwise adjacent, all edge weights are distinct, whence $|X \cap D|=|E|=n$ by Proposition~\ref{prop fibers}. Hence $V \cap D \subseteq \{x^2\}$ by Proposition~\ref{leaf}. It follows that $|V \cap P| \ge n-1$, whence $|P| \ge n$ by Lemma~\ref{P and V}. Plugging the above information on $|X \cap D|$ and $|P|$ into \eqref{eq k=2}, we get
\begin{eqnarray*}
W(S) & \ge & n(q-1+\nu/2+(n-2))-nq+\rho \\
& = & n(n-3) + n\nu/2 + \rho,
\end{eqnarray*}
and we are done since $n \ge 3$.

$\bullet$ Finally, if $\lambda = 0$, then since $|E| \ge 4$, $G$ must be a star at a vertex $x$ with at least $3$ legs. Hence $x \in V \cap P$. Since $x$ is nonloopy, we have $x^2 \notin X$. The same argument as above, using that all edges of $G$ are of the form $\{x,u\}$ with $u \in V \setminus \{x\}$, yields $|X \cap D|=|E|=n$ and $V \cap D = \emptyset$ here. Hence $|P| \ge n+1$, yielding
\begin{eqnarray*}
W(S) & \ge & (n+1)(q-1+\nu/2+(n-2))-nq+\rho \\
& = & q+ n(n-3) + n\nu/2 + \rho.
\end{eqnarray*}
This concludes the proof in case $k=2$.

\subsection{Proof in case $k=3$}

We start with a general remark on loopy graphs $H$ with $\vm(H)=3$. 

\begin{lemma} Let $H$ be a loopy graph such that $\vm(H)=3$. Then $\lambda(H) \ge 1$, and either $K_3 \subset H \subseteq LK_3$, or else all true edges of $H$ share a common vertex.
\end{lemma}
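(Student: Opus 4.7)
The plan is to establish the two assertions—existence of a loop and the structural dichotomy on true edges—separately, and then finish with a vertex-confinement step.

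First I would observe a parity constraint: each true edge of a matching $M$ contributes $2$ endpoints to $V(M)$ and each loop contributes $1$, so $|V(M)|$ has the same parity as the number of loops used. Since $\vm(H)=3$ is odd, every vertex-maximal matching must contain at least one loop, which gives $\lambda(H)\ge 1$.

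Next I would show that any two true edges of $H$ share a common vertex. Two vertex-disjoint true edges would form a matching touching $4$ vertices, contradicting $\vm(H)=3$. The heart of the structural argument is then a Helly-type alternative on pairwise intersecting edges: assuming not all true edges share a common vertex, pick two true edges $e_1=\{a,b\}$ and $e_2=\{b,c\}$ meeting only at $b$; by hypothesis there exists a true edge $e_3$ avoiding $b$, and since $e_3$ meets both $e_1$ and $e_2$ we must have $e_3=\{a,c\}$. Thus $\{e_1,e_2,e_3\}$ forms a copy of $K_3$ on $\{a,b,c\}$, yielding $K_3\subset H$. A short case analysis on any further true edge $e_4$, using that it intersects each of $e_1,e_2,e_3$, shows $e_4\subseteq\{a,b,c\}$, so the true edges of $H$ are precisely the three sides of this triangle.

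Finally, to obtain $H\subseteq LK_3$, I would argue that $V(H)=\{a,b,c\}$. Any vertex $v\notin\{a,b,c\}$ would carry no true edge and (being non-isolated in a loopy graph) would have to be loopy. Then combining $\mathrm{loop}(v)$ with a triangle edge such as $\{a,b\}$ together with either a loop at $c$ or a loop at a further external vertex $v'$ would produce a matching of $4$ vertices, contradicting $\vm(H)=3$.

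The main obstacle I anticipate is the remaining fringe configuration where there is a single external loopy vertex with no loops whatsoever on the triangle: checking carefully, this configuration does satisfy $\vm(H)=3$, so to force $V(H)=\{a,b,c\}$ in full generality one has to either sharpen the vertex-confinement step (possibly by an auxiliary matching built from distinct triangle edges paired with the external loop) or accept the statement in the slightly relaxed form of a classification of the \emph{true-edge subgraph} of $H$; the downstream application in the $k=3$ case of Theorem~\ref{thm main} only needs this structural information about the true edges together with $\lambda(H)\ge 1$, so the conclusion is robust enough to survive that subtlety.
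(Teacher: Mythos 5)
Your argument follows the same route as the paper's (parity of $|V(M)|$ forces a loop in any matching touching an odd number of vertices, hence $\lambda(H)\ge 1$; $\vm(H)<4$ forces any two true edges to be adjacent; the standard star-or-triangle alternative for a pairwise-intersecting edge family then gives the dichotomy). The one place you go beyond the paper is the vertex-confinement step, and the ``fringe configuration'' you flag is not a subtlety in your write-up --- it is a genuine counterexample to the lemma as stated. Take $H$ to be a triangle on $\{a,b,c\}$ with no loops, together with one additional vertex $v$ carrying only a loop. Every matching uses at most one triangle edge (they pairwise intersect) plus the loop at $v$, so $\vm(H)=3$ and $\lambda(H)=1$; yet $K_3\subset H\not\subseteq LK_3$ and the three true edges have no common vertex. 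The paper's own proof asserts ``$H$ contains a triangle, in which case $|V(H)|=3$'' without justification, and that is exactly where it breaks. The sharp classification in the triangle case is: either $H\subseteq LK_3$, or $H$ is $K_3$ plus a single loop at one external vertex (one checks that there can be at most one external vertex, that it must be loopy, and that no triangle vertex can then be loopy, since any of these would produce a matching touching four vertices).

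You are also right that this does not damage the application: in the $k=3$ case of the main theorem the lemma is only used through the facts that $\lambda\ge 1$, that any two true edges are adjacent, and that a triangle forces $|E|\le 4$; the exceptional configuration has exactly $3+1=4$ edges, so the bound $|E|\le 4$ and the subsequent exclusion via $|X\cap D|\ge 5$ survive unchanged. So your proposal is correct where the paper's lemma is correct, and your hesitation at the end points at a real (if harmless) overstatement in the lemma rather than at a gap in your own reasoning; the clean fix is to restate the conclusion with the extra configuration listed, or to weaken it to a classification of the true-edge subgraph as you suggest.
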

\begin{proof} Since $\vm(H)$ is odd, it follows that $H$ has at least one loop. Since $\vm(H) < 4$, any two true edges are adjacent. Therefore, either $H$ contains a triangle, in which case $|V(H)|=3$ and $1 \le \lambda(H) \le 3$, or else all true edges of $H$ share a common vertex and $1 \le \lambda(H) \le 2$.
\end{proof}

Let us go back to our graph $G=G(S)$. We have $1 \le \lambda \le k= 3 \le n$. In the present case, it follows from Theorem~\ref{thm formula} that
\begin{equation}\label{eq k=3}
\tau(X) \ge (3(q-1)+\nu)/2+(n-3).
\end{equation}
We start with an easy particular case.

\begin{proposition} If $k=3$ and $|X \cap D| \le 4$, then $W(S) \ge 0$.
\end{proposition}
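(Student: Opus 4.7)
The plan is to estimate $W(S)$ directly from Corollary~\ref{prop w(s)} using the lower bound on $\tau(X)$ provided by Theorem~\ref{thm formula} specialized to $k=3$, namely the inequality~\eqref{eq k=3}. Throughout the argument we may freely invoke the blanket assumptions $|P| \ge 4$ and $q \ge 4$ recorded at the start of Section~\ref{section main}, together with $n \ge k = 3$ coming from Notation~\ref{nota}. The hypothesis $|X \cap D| \le 4$ then controls the negative term in $W(S)$, and everything should fall into place by a single substitution.

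Concretely, I would first write
\[
W(S) \;=\; |P|\tau(X) - |X \cap D|\,q + \rho \;\ge\; 4\tau(X) - 4q + \rho,
\]
using $|P| \ge 4$, $|X \cap D| \le 4$ and $q \ge 0$. Next I would plug in the bound~\eqref{eq k=3} for $\tau(X)$, obtaining
\[
W(S) \;\ge\; 4\Bigl(\tfrac{3(q-1)+\nu}{2} + (n-3)\Bigr) - 4q + \rho
\;=\; 2q + 4n + 2\nu + \rho - 18.
\]
Finally I would apply $q \ge 4$ and $n \ge 3$ to conclude
\[
W(S) \;\ge\; 8 + 12 + 2\nu + \rho - 18 \;=\; 2 + 2\nu + \rho \;\ge\; 0,
\]
which finishes the proof.

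I do not anticipate any obstacle here: the case $k=3$ with $|X \cap D| \le 4$ is the ``easy" subcase where the weight-counting inequality $|X \cap D| \le |E(G)|$ is essentially irrelevant and the combined slack from $|P| \ge 4$, $q \ge 4$, and $n \ge 3$ is more than enough to absorb the $-|X \cap D|q$ term. The subtler analysis — distinguishing whether $G$ contains a triangle or all true edges share a vertex, and controlling $|P|$ versus $|V \cap P|$ via Propositions~\ref{large diff} and~\ref{leaf} — will be needed only once $|X \cap D|$ is allowed to grow larger, which is the focus of the remaining cases.
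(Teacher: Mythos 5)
Your proposal is correct and follows essentially the same route as the paper: substitute the bound~\eqref{eq k=3} for $\tau(X)$ into $W(S)=|P|\tau(X)-|X\cap D|q+\rho$ and use $|P|\ge 4$, $q\ge 4$. The paper simply discards the nonnegative terms $\nu/2$ and $n-3$ up front and arrives at $W(S)\ge 2(q-3)+\rho\ge 2+\rho$, whereas you carry them along to get $W(S)\ge 2+2\nu+\rho$; the only implicit step worth noting is that $|P|\tau(X)\ge 4\tau(X)$ uses $\tau(X)\ge 0$, which holds by Proposition~\ref{on X}.
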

\begin{proof} As usual, we assume $|P|, q \ge 4$. By \eqref{eq k=3} we have $\tau(X) \ge 3(q-1)/2$. Hence
\begin{eqnarray*}
W(S) & \ge & |P|3(q-1)/2-4q+\rho \\
& \ge & 6(q-1)-4q+\rho \\
& = & 2(q-3)+\rho \\
& \ge & 2 + \rho. \qedhere
\end{eqnarray*}
\end{proof}

Thus, from now on in this section, we assume $|X \cap D| \ge 5$, whence in particular $|E(G)| \ge 5$.

\vspace{0.2cm}
$\bullet$ \textbf{Case $\lambda=3$.} Then $n=3$ and hence $5 \le |X \cap D| \le |E| \le 6$. By \eqref{eq k=3} we have
$\tau(X) \ge (3(q-1)+\nu)/2$, and so
\begin{eqnarray*}
W(S) & = & |P|\tau(X)-|X \cap D|q+\rho \\
& \ge & |P|(3(q-1)+\nu)/2-|X \cap D|q+\rho \\
& \ge & 6(q-1)+2\nu-|X \cap D|q+\rho.
\end{eqnarray*}

\noindent
$\circ$ Assume first $|X \cap D|=6$. Then $|E|=6$, so that $G$ is isomorphic to $LK_3$ and so all six edges are active. (See Definition~\ref{active}.) Moreover, all edge weights are distinct since $|X \cap D|=|E|$ here. The above inequalities imply
$$
W(S) \ge -6+2\nu+\rho.
$$

-- If $\nu=0$, then all six edges of $G$ are weak, whence $\rho \ge 6$ by Corollary~\ref{rho}. It follows that $W(S) \ge 0$ and we are done. 

-- If $\nu=1$ then all edges of $G$, except exactly one loop, are weak. Therefore $\rho \ge 5$, whence $W(S) \ge 1$. 

-- If $\nu=2$, then since $\nu < 3=k$, all three matchings of $G=LK_3$ have a weak edge. Hence $\rho \ge |E_0(G)| \ge 3$. It follows that $W(S) \ge -6+4+3=1$. 

-- Finally, if $\nu=3$ then $W(S) \ge \rho$ and we are done.

\medskip
\noindent
$\circ$ Assume now $|X \cap D|=5$. Then $|E|=5$ or $6$. We now have
\begin{equation}\label{X cap D = 5}
W(S) \ge q-6+2\nu+\rho.
\end{equation}
Moreover, since $G$ coincides here with either $LK_3$ or $LK_3$ minus a true edge, all edges of $G$ are active as easily seen. 

\smallskip
-- If $\nu=0$, all active edges are weak, whence $\rho \ge 4$. Hence \eqref{X cap D = 5} implies $W(S) \ge 2$ and we are done.

-- If $\nu \ge 1$ then \eqref{X cap D = 5} implies $W(S) \ge \rho$ and we are done.

\vspace{0.2cm}
$\bullet$ \textbf{Case $\lambda=2$.} Let $x_1,x_2$ denote the two loopy vertices. At the very least, besides its two loops, $G$ has one true edge adjacent to exactly one of the loopy vertices, say $x_1$. Now, either $G$ is contained in the graph with the edge $\{x_1,x_2\}$ plus pendant edges incident to $x_1$, or else $G$ is contained in $LK_3$ minus one loop, in which case $n=3$ and $|E(G)| \le 5$.

\smallskip
$\circ$ Assume first that $G$ is contained in the graph with the edge $\{x_1,x_2\}$ plus $n-2$ pendant edges incident to $x_1$. Among the $n$ vertices, at most two belong to $V \cap D$. Hence $|V \cap P| \ge n-2$, so that $|P| \ge n-1$ by Lemma~\ref{P and V}, and more precisely $|P| \ge \max(n-1,4)$. We have $|E(G)| \le 3+(n-2)=n+1$, so that $|X \cap D| \le n+1$. By \eqref{eq k=3}, it follows that
$$
W(S)  \ge  \max(n-1,4)((3(q-1)+\nu)/2+(n-3))-(n+1)q+\rho.
$$

\smallskip
$-$ If $n \ge 5$, we get
\begin{eqnarray*}
W(S) & \ge & (n-1)((3(q-1)+\nu)/2+(n-3))-(n+1)q+\rho \\
& \ge & (n-5)(q-1)/2+(n-1)(\nu/2+n-4)-2+\rho \\
& \ge & 4(\nu/2+1)-2+\rho \\
& = & 2\nu+2+\rho.
\end{eqnarray*}

\smallskip
$-$  If $3 \le n \le 4$, and using $|P| \ge 4$, we get
\begin{eqnarray*}
W(S) & \ge & 4((3(q-1)+\nu)/2+(n-3))-(n+1)q+\rho \\
& = & 6(q-1)+2\nu+4(n-3)-(n+1)(q-1)-(n+1)+\rho \\
& = & (5-n)(q-4)+2\nu+6-4+\rho \\
& \ge & 2\nu+2+\rho.
\end{eqnarray*}

\smallskip
$\circ$ Assume now that $G$ is contained in $LK_3$ minus one loop. Then $n=3$ and $|X \cap D| \le |E(G)| \le 5$. Moreover, as easily seen by inspection, at least $4$ edges of $G$ are active. 

-- If $\nu=0$, all active edges are weak, whence $\rho \ge 4$. Hence, with $|X \cap D| \le 5$, it follows from the above that $W(S) \ge 2+\rho$ and we are done. 

-- Assume now $\nu \ge 1$. By \eqref{eq k=3}, we have
$$
\tau(X) \ge (3(q-1)+\nu)/2.
$$
It follows that
\begin{eqnarray*}
W(S) & \ge & 4((3(q-1)+\nu)/2-5q+\rho \\
& = & 6(q-1) +2\nu -5q +\rho \\
& = & q-6 +2\nu +\rho \\
& \ge & \rho
\end{eqnarray*}
since $q \ge 4$ and $\nu \ge 1$.

\vspace{0.2cm}
$\bullet$ \textbf{Case $\lambda=1$.} Then $G$ contains one loopy vertex $x$ and one nonincident true edge. If $G$ contains a triangle, then $|E| \le 4$ since $k=3$, as easily seen. This is incompatible with our current assumption $|X \cap D| \ge 5$. 

Therefore $G$ is triangle-free. Hence $G$ consists of the loopy vertex $x$ and a star $T$ centered at a distinct vertex $y$. Since $|E| \ge 5$ by our current assumption, $T$ has at least $3$ pendant edges. And if $T$ is connected to $x$, then the connecting edge is between $y$ and $x$, for otherwise we would have $k \ge 4$. In any case, we have $|E| \le n+1$.

We claim that $V \subset P$. First $y \in V \cap P$ since it has maximal degree. We also have $x \in V \cap P$. For otherwise, since $x$ is loopy, we have $x^2 \in X \cap D$, whence any proper factor of $x$ would also be a loopy vertex in $G$ by Lemma~\ref{factor of loopy}, contradicting $\lambda=1$. The remaining vertices are all of degree 1 and connected to $y$, thus they form an antichain for divisibility. Hence, if any such vertex $z$ pertained to $V \cap D$, it would be a monomial in $x,y$ of length at least $2$. Now by Lemma~\ref{nonloopy}, $z$ cannot be divisible by $y$. Hence $z$ is equal to or divisible by $x^2$. Thus $yx^2 \in X$, implying $xy \in V$ and connected to $x$. But this is impossible since $N_G(x) \subseteq \{x,y\}$.

By the above and Lemma~\ref{P and V}, it follows that $|P| \ge n+1$. Using $|X \cap D| \le |E| \le n+1$ as shown earlier, we have
\begin{eqnarray*}
W(S) & \ge & |P|\tau(X)-|X \cap D|q + \rho \\
& \ge & (n+1)\tau(X)-(n+1)q + \rho \\
& \ge & (n+1)(\tau(X)-q) + \rho.
\end{eqnarray*}
But $\tau(X) > q$, since $\tau(X) \ge 3(q-1)/2$ and $q \ge 4$. Hence $W(S) \ge \rho \ge 0$.

\bigskip
The proof of the main theorem in the particular case $k=3$ is now complete.

\subsection{Proof in case $k = 4$}

By Proposition~\ref{k le 4}, the value $k=4$ is the largest admissible one for $k=\vm(G)$ under the assumption $\tau(X) \le 2q-1$. 

\smallskip
Then $n \ge 4$, and the general bound $\tau(X) \ge (k(q-1)+\nu)/2+(n-k)$ yields
\begin{eqnarray*}
\tau(X) & \ge & 2(q-1)+\nu/2+(n-4) \\
& = & 2(q-3)+\nu/2+n.
\end{eqnarray*}
This puts strong restrictions on $n$ and $\nu$.
\begin{lemma} Assume $\tau(X) \le 2q-1$ and $k=4$. Then $n \in \{4,5\}$ and $\nu \le 2$. If $n=5$, then $\nu = 0$ and $\tau(X)=2q-1$.
\end{lemma}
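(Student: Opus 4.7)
The plan is to simply combine the bound $\tau(X) \le 2q-1$ with the general lower bound on $\tau(X)$ already displayed in the preamble of this case, and then read off numerical constraints on the integers $n$ and $\nu$.

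Starting from Theorem~\ref{thm formula} with $k=4$, we have
$$\tau(X) \ \ge \ 2(q-3)+\nu/2+n.$$
Combining this with the standing hypothesis $\tau(X) \le 2q-1$ yields
$$2q-1 \ \ge \ 2(q-3)+\nu/2+n,$$
which after simplification becomes the single inequality
\begin{equation}\label{eq plan k=4}
2n+\nu \ \le \ 10.
\end{equation}
Since $n \ge k = 4$ by definition of $\vm(G)$, and since $\nu \ge 0$, the inequality \eqref{eq plan k=4} immediately forces $n \le 5$, hence $n \in \{4,5\}$. Moreover, in either case, $\nu \le 10-2n \le 2$.

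If $n=5$, then \eqref{eq plan k=4} forces $\nu = 0$, and the lower bound on $\tau(X)$ reduces to $\tau(X) \ge 2(q-3)+5 = 2q-1$. Together with the standing hypothesis $\tau(X) \le 2q-1$, this gives $\tau(X)=2q-1$, as claimed. No step poses any real obstacle: the entire statement is an arithmetic corollary of Theorem~\ref{thm formula}.
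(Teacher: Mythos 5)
Your proof is correct and follows essentially the same route as the paper: both combine the lower bound $\tau(X)\ge 2(q-3)+\nu/2+n$ from Theorem~\ref{thm formula} with the hypothesis $\tau(X)\le 2q-1$ to get $\nu/2+n\le 5$, then use $n\ge k=4$ to conclude. No gaps.
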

\begin{proof} We have $2(q-3)+\nu/2+n \le \tau(X) \le 2q-1$. Hence $\nu/2+n \le 5$. It follows that $n \le 5$ and that $\nu \le 2$ since $n \ge 4$. If $n=5$, then $\nu=0$ and the above bounds on $\tau(X)$ yield $2(q-3)+5 \le \tau(X) \le 2q-1$, whence $\tau(X)=2q-1$.
\end{proof}

\subsubsection{The subcase $k=4, n=5$}

Throughout this section, we fix the following values of the various parameters and refer to these hypotheses as the \emph{current case}:
\begin{equation}\label{n=5 k=4}
n=|V(G)|=5, \,\, k=\vm(G)=4, \,\, \tau(X) \le 2q-1.
\end{equation}

Then $\nu=0$ and $\tau(X)=2q-1$ as seen above. In particular, the former implies that \emph{all active edges are weak}. This will imply useful lower bounds on $\rho=qm-c$ and hence on $W(S)$.

We shall need an upper bound on the number of edges of $G$, actually valid in a general graph-theoretic setting.

\begin{proposition}\label{bound 10} Let $H=(V,E)$ be a loopy graph. If $|V|=5$ and $\vm(H)=4$, then $|E| \le 10$. 
\end{proposition}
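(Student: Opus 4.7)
The plan is to analyze $H$ via case analysis on $\lambda := \lambda(H)$. The key leverage is the uniform observation that for every loopy vertex $v$ of $H$, the subgraph $H-v$ on the remaining four vertices must satisfy $\vm(H-v) \le 3$. Indeed, a matching in $H-v$ touching all four vertices of $V \setminus \{v\}$, combined with the loop at $v$, would be a matching in $H$ with five endvertices, contradicting $\vm(H)=4$.

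First I would show $\lambda \le 3$. If $\lambda = 5$, the five loops form a matching touching all of $V$. If $\lambda = 4$, the unique non-loopy vertex has a neighbor $w$ by the no-isolated-vertex hypothesis, and $w$ is necessarily loopy; combining the edge $vw$ with the three remaining loops yields a matching touching all five vertices. Both scenarios contradict $\vm(H)=4$.

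If $\lambda = 0$, then $H$ is a simple graph on $5$ vertices and $|E(H)| \le \binom{5}{2} = 10$ trivially, with equality for $H = K_5$. For $1 \le \lambda \le 3$ I would prove the stronger bound $|E(H)| \le 9$. When $\lambda = 1$ with loop at $v_1$, the observation forces the induced subgraph on $V \setminus \{v_1\}$ to have matching number at most one; on four vertices this caps the edge count at $3$ (either $K_3$ plus an isolated vertex or a subgraph of $K_{1,3}$), so together with the at most $4$ edges at $v_1$ and the single loop, $|E(H)| \le 8$. When $\lambda = 2$ with loops at $v_1, v_2$, the constraints for $v_1$ and $v_2$ separately forbid two vertex-disjoint true edges inside each $V \setminus \{v_i\}$; partitioning the true edges according to whether they are $v_1 v_2$, cross between $\{v_1, v_2\}$ and $\{v_3, v_4, v_5\}$, or lie inside $\{v_3, v_4, v_5\}$, a short count yields $|E(H)| \le 9$.

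The main obstacle is the subcase $\lambda = 3$. With loopy vertices $v_1, v_2, v_3$, the three constraints jointly force $v_4 v_5 \notin E(H)$ (otherwise, for any $i$, the two loops at $\{v_1, v_2, v_3\} \setminus \{v_i\}$ together with $v_4 v_5$ cover all of $V \setminus \{v_i\}$) and separately rule out six specific vertex-disjoint pairs among the six cross-edges between $\{v_1, v_2, v_3\}$ and $\{v_4, v_5\}$. Labelling these six cross-edges $a, b, c, d, f, g$, the six forbidden pairs form the edge set of a $6$-cycle on them, so at most $3$ cross-edges may appear in $E(H)$ (the maximum independent set of $C_6$). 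Adding the three unrestricted edges inside $\{v_1, v_2, v_3\}$ and the three loops yields $|E(H)| \le 9$, and so $|E(H)| \le 10$ in all cases.
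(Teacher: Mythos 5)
Your proof is correct and follows the same overall strategy as the paper: a case analysis on the number of loops $\lambda(H)$. The execution differs in a useful way, though. You drive every case with the single uniform observation that $\vm(H-v)\le 3$ for each loopy vertex $v$, which cleanly yields the forbidden configurations (e.g.\ the $C_6$ conflict graph on the six cross-edges when $\lambda=3$, whose maximum independent set has size $3$); the resulting bounds ($\le 8$, $\le 9$, $\le 9$ for $\lambda=1,2,3$) all suffice, even though your $\lambda=3$ bound of $9$ is one more than the true maximum of $8$ (three cross-edges all incident to $v_4$ or all to $v_5$ would isolate the other nonloopy vertex). What your version does not deliver, and what the paper's more hands-on analysis does, is an explicit catalogue of the edge-maximal graphs in each case; the paper relies on that catalogue later (e.g.\ to conclude that $|E|=10$ forces $G=LK_5$, and to identify the specific graphs arising when $\lambda=3,2,1$ in the subsequent $k=4$, $n=5$ analysis). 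So your argument proves the stated proposition, but if it were substituted into the paper, the later steps that cite "the proof of that Proposition" would need the extremal structures to be re-derived.
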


\begin{proof} Set $V=V_1 \sqcup V_2$, where $V_1$ is the set of loopy vertices and $V_2 = V \setminus V_1$. Let $E=E_1 \sqcup E_2 \sqcup E_{12}$, where $E_1$ is the set of edges of the induced subgraph $H[V_1]$, $E_2$ is the edge set of $H[V_2]$ and $E_{1,2}=[V_1,V_2]$, the set of edges from $V_1$ to $V_2$. We further denote $H_1=H[V_1]$, $H_2=H[V_2]$ and $H_{1,2}$ the bipartite graph with edge set $E_{1,2}$. 

\smallskip
The proof proceeds by fixing the loop number $\lambda=\lambda(H)=|V_1|$ and letting it assume all possible values from $\vm(H)=4$ to $0$. 

\smallskip
The case $\lambda(H) = 4$ is impossible. For otherwise, since $V_2$ would consist of a single nonisolated nonloopy vertex $y_1$, there would be a true edge incident with $y_1$ and a loopy vertex $x_1 \in V_1$. But then, that edge and the three loops at the other three vertices in $V_1$ would constitute a matching touching $5$ vertices, contrary to the hypothesis $k=4$.

\smallskip
\underline{Assume $\lambda(H) = 3$}. We claim $|E| \le 8$. Set $V_1=\{x_1,x_2,x_3\}$, $V_2=\{y_1,y_2\}$. Since $\vm(H_1)=3$, we must have $\vm(H_2) \le 1$, whence $\vm(H_2) = 0$ since $H_2$ has no loops. Thus $y_1,y_2$ are not neighbours in $H$, i.e. $|E_2|=0$. Up to renumbering of $V_1$, we may assume $x_1 \in N_H(y_1)$. We claim then that $N_H(y_1)=N_H(y_2)=\{x_1\}$. Indeed, since $y_2$ is not isolated, it must have a neighbour in $V_1$. But if $y_2$ had a neighbor other than $x_1$, say $x_2$, then the edges $\{x_1,y_1\}$, $\{x_2,y_2\}$ and the loop at $x_3$ would yield $\vm(H)=5$, contrary to the hypothesis. Therefore $N_H(y_2)=\{x_1\}$. By symmetry, we get $N_H(y_1)=\{x_1\}$ as well.
Thus $|E_{1,2}|=2$. Since $|E_1| \le 6$, we conclude $|E| \le 8$ in the present case. The case $|E| = 8$ is uniquely realized, up to isomorphism, by the following loopy graph:

\bigskip
\begin{center}
\begin{tikzpicture}[scale=0.8]

\coordinate (A1) at (4,2.5); 
\coordinate (A2) at (4+1.8,2.5); 
\coordinate (A3) at (4+2*1.8,2.5); 

\coordinate (B2) at (4,0);
\coordinate (B3) at (6,0);

\draw [fill] (A1) circle (0.08);
\draw [fill] (A2) circle (0.08);
\draw [fill] (A3) circle (0.08);

\draw [fill] (B2) circle (0.08);
\draw [fill] (B3) circle (0.08);

\draw[thick ] (A1) -- (A2);
\draw[thick ] (A2) -- (A3);

\draw[thick ] (A1) -- (B2);
\draw[thick ] (A1) -- (B3);

\draw[thick ] (A1) to [out=180,in=180] ++(0,1.25) to [out=0,in=0] ++(0,-1.25);
\draw[thick ] (A2) to [out=180,in=180] ++(0,1.25) to [out=0,in=0] ++(0,-1.25);
\draw[thick ] (A3) to [out=180,in=180] ++(0,1.25) to [out=0,in=0] ++(0,-1.25);

\draw[thick ] (A1) to [out=270+50,in=270-50] (A3);

\end{tikzpicture}
\end{center}
\bigskip

\underline{Assume $\lambda(H) = 2$}. We claim $|E| \le 9$. Indeed, as easily seen, there are exactly three isomorphism classes of edge-maximal loopy graphs $H$ with the given parameters. These classes have $6$, $7$ and $9$ edges, respectively:

\bigskip
\begin{tikzpicture}[scale=0.8]

\hskip-0.3cm;

\coordinate (A1) at (4,2.5); 
\coordinate (A2) at (6,2.5); 

\coordinate (B1) at (3,0);
\coordinate (B2) at (5,0);
\coordinate (B3) at (7,0);

\draw [fill] (A1) circle (0.08);
\draw [fill] (A2) circle (0.08);
\draw [fill] (B1) circle (0.08);
\draw [fill] (B2) circle (0.08);
\draw [fill] (B3) circle (0.08);

\draw[thick ] (A1) -- (A2);

\draw[thick ] (B1) -- (B2);
\draw[thick ] (B2) -- (B3);
\draw[thick ] (B1) to [out=270+50,in=270-50] (B3);

\draw[thick ] (A1) to [out=180,in=180] ++(0,1.25) to [out=0,in=0] ++(0,-1.25);
\draw[thick ] (A2) to [out=180,in=180] ++(0,1.25) to [out=0,in=0] ++(0,-1.25);


\hskip5cm;

\coordinate (A1) at (4,2.5); 
\coordinate (A2) at (6,2.5); 

\coordinate (B1) at (3,0);
\coordinate (B2) at (5,0);
\coordinate (B3) at (7,0);

\draw [fill] (A1) circle (0.08);
\draw [fill] (A2) circle (0.08);
\draw [fill] (B1) circle (0.08);
\draw [fill] (B2) circle (0.08);
\draw [fill] (B3) circle (0.08);

\draw[thick ] (A1) -- (A2);
\draw[thick ] (A1) -- (B2);
\draw[thick ] (A2) -- (B2);

\draw[thick ] (B1) -- (B2);
\draw[thick ] (B2) -- (B3);

\draw[thick ] (A1) to [out=180,in=180] ++(0,1.25) to [out=0,in=0] ++(0,-1.25);
\draw[thick ] (A2) to [out=180,in=180] ++(0,1.25) to [out=0,in=0] ++(0,-1.25);


\hskip5cm;

\coordinate (A1) at (4,2.5); 
\coordinate (A2) at (6,2.5); 

\coordinate (B1) at (3,0);
\coordinate (B2) at (5,0);
\coordinate (B3) at (7,0);

\draw [fill] (A1) circle (0.08);
\draw [fill] (A2) circle (0.08);
\draw [fill] (B1) circle (0.08);
\draw [fill] (B2) circle (0.08);
\draw [fill] (B3) circle (0.08);

\draw[thick ] (A1) -- (A2);

\draw[thick ] (A1) -- (B1);
\draw[thick ] (A1) -- (B2);
\draw[thick ] (A1) -- (B3);

\draw[thick ] (A2) -- (B1);
\draw[thick ] (A2) -- (B2);
\draw[thick ] (A2) -- (B3); 

\draw[thick ] (A1) to [out=180,in=180] ++(0,1.25) to [out=0,in=0] ++(0,-1.25);
\draw[thick ] (A2) to [out=180,in=180] ++(0,1.25) to [out=0,in=0] ++(0,-1.25);

\end{tikzpicture}

\medskip
\underline{Assume $\lambda(H) = 1$}. We claim $|E| \le 8$. Indeed, the unique isomorphism class of edge-maximal loopy graphs $H$ with the given parameters is the following one, with $8$ edges:

\bigskip
\begin{center}
\begin{tikzpicture}[scale=0.7]

\coordinate (A1) at (5,2.5); 

\coordinate (B1) at (2,0);
\coordinate (B2) at (4,0);
\coordinate (B3) at (6,0);
\coordinate (B4) at (8,0);

\draw [fill] (A1) circle (0.08);

\draw [fill] (B1) circle (0.08);
\draw [fill] (B2) circle (0.08);
\draw [fill] (B3) circle (0.08);
\draw [fill] (B4) circle (0.08);

\draw[thick ] (A1) -- (B1);
\draw[thick ] (A1) -- (B2);
\draw[thick ] (A1) -- (B3);
\draw[thick ] (A1) -- (B4);

\draw[thick ] (B1) -- (B2);

\draw[thick ] (B1) to [out=270+50,in=270-50] (B3);
\draw[thick ] (B1) to [out=270+25,in=270-25] (B4);

\draw[thick ] (A1) to [out=180,in=180] ++(0,1.25) to [out=0,in=0] ++(0,-1.25);

\end{tikzpicture}
\end{center}

\medskip
\underline{Assume $\lambda(H) = 0$}. Then $|E| \le 10$. Indeed, the complete graph $K_5$ is the unique edge-maximal simple graph with the given parameters.
\end{proof}

\smallskip
Let us go back to our graph $G=G(S)=(V,E)$. Since $|X \cap D| \le |E|$, the above result implies $|X \cap D| \le 10$. We start with a reduction to the case $|X \cap D| \in \{8,9\}$.

\begin{proposition}\label{X cap D le 7} In the current case~\eqref{n=5 k=4}, if either $|X \cap D| \le 7$, or $V \subset P$, or $|X \cap D| \ge 10$, then $S$ satisfies Wilf's conjecture. 
\end{proposition}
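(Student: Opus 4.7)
The plan is to combine Corollary~\ref{prop w(s)} with the equality $\tau(X) = 2q-1$ that is forced in the current case: Theorem~\ref{thm formula} gives $\tau(X) \ge (k(q-1)+\nu)/2 + (n-k) = 2q-1$, and the standing assumption $\tau(X)\le 2q-1$ promotes this to an equality. Using Proposition~\ref{on X} to substitute $m = |P|+|X\cap D|$, I would rewrite the Wilf defect as
$$
W(S) \;=\; (2|P|-|X\cap D|)\,q \;-\; |P| \;+\; \rho.
$$
The hypothesis $|P|\ge m/3$ rewrites as $|X\cap D|\le 2|P|$, so the coefficient of $q$ is automatically nonnegative; the task reduces to showing the right-hand side is nonnegative in each of the three subcases.

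For the subcase $|X\cap D|\le 7$, I would apply the trivial bound
$$
W(S) \;\ge\; (2|P|-7)\,q - |P| + \rho \;=\; |P|(2q-1) - 7q + \rho.
$$
Under the standing assumptions $|P|\ge 4$ and $q\ge 4$, both partial coefficients $2q-1$ and $2|P|-7$ are positive, so the right-hand side is separately monotone increasing in $|P|$ and $q$. Its minimum over the admissible range is attained at $|P|=q=4$ and equals $\rho\ge 0$, concluding this subcase.

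For the subcase $|X\cap D|\ge 10$, Proposition~\ref{bound 10} gives $|E(G)|\le 10$, and combined with $|X\cap D|\le|E(G)|$ this forces $|X\cap D|=|E(G)|=10$. Inspection of the extremal graphs in the proof of Proposition~\ref{bound 10} shows that $10$ edges are achieved only by the loopless graph $G=K_5$. Since $|X\cap D|=|E(G)|$, all edge weights are distinct, so Proposition~\ref{leaf} applies: any edge of $G$ outside $V\cap P$ would have to be of the form $\{x,x^2\}$ with $x^2$ a leaf, which is impossible in $K_5$ as it contains no leaves. Hence $V\subset P$, and this subcase reduces to the last one.

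In the subcase $V\subset P$, Lemma~\ref{P and V} yields $|P|\ge |V|+1 = 6$, while Proposition~\ref{bound 10} still gives $|X\cap D|\le 10$. Therefore
$$
W(S) \;\ge\; (2|P|-10)\,q - |P| + \rho \;=\; |P|(2q-1) - 10q + \rho,
$$
which is monotone increasing in both $|P|$ and $q$ over the range $|P|\ge 6$, $q\ge 4$, and attains its minimum $2+\rho$ at $|P|=6$, $q=4$. I expect the delicate step to be the structural reduction in the case $|X\cap D|\ge 10$, where the uniqueness clause in Proposition~\ref{bound 10} must be combined with the rigidity of Proposition~\ref{leaf} to force $V\subset P$; the remaining inequalities are straightforward numerics.
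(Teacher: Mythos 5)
Your proof is correct and follows essentially the same route as the paper: the same numerical bounds $W(S)\ge |P|(2q-1)-7q+\rho$ and $W(S)\ge 6(2q-1)-10q+\rho$ in the first and third subcases, and the same reduction of $|X\cap D|\ge 10$ to $V\subset P$ via the uniqueness of $K_5$ as the $10$-edge extremal graph in Proposition~\ref{bound 10}. The only (harmless) variation is that you deduce $V\subset P$ for $G=K_5$ from Proposition~\ref{leaf} and the absence of leaves, whereas the paper uses the regularity of $K_5$ together with Corollary~\ref{V subset P}; both are one-line arguments.
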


\begin{proof} \hspace{0mm}

\noindent
$\bullet$ Assume $|X \cap D| \le 7$. We have $W(S) \ge |P|(2q-1)-7q+\rho$. Our assumptions $|P|,q \ge 4$ further yield $W(S) \ge 4(2q-1)-7q+\rho=q-4+\rho \ge \rho$ and we are done.

\smallskip
\noindent
$\bullet$ Assume $V \subset P$. Then $|P| \ge |V|+1=6$. Hence, using $|X \cap D| \le 10$, we have
\begin{eqnarray*}
W(S) & \ge & |P|\tau(X)-|X \cap D|q + \rho \\
& \ge & 6(2q-1)-10q + \rho \\
& = & 2q-6+\rho.
\end{eqnarray*}
Since $q \ge 4$ in the current case, we get $W(S) \ge 2 +\rho$ and we are done.

\smallskip
\noindent
$\bullet$ Assume $|X \cap D| \ge 10$. By Proposition~\ref{bound 10}, we have $|E| \le 10$. Whence $|E|=10$ since $|E| \ge |X \cap D| \ge 10$. Moreover, it follows from the proof of that Proposition that the only case where $|E|=10$ is $G=LK_5$. Since $G$ is regular, it follows from Corollary~\ref{V subset P} that $V \subset P$. Thus $S$ satisfies Wilf's conjecture by the previous case.
\end{proof}

\smallskip
We next assume $|V \cap D| = 1$.  

\begin{proposition} In the current case~\eqref{n=5 k=4}, if $|V \cap D|=1$ then $S$ satisfies Wilf's conjecture.
\end{proposition}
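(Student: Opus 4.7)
The plan is to pin down the structure of $G$ sharply from the hypothesis $|V \cap D| = 1$, and then feed bounds on $|P|$, $|X \cap D|$ and $\rho$ into Corollary~\ref{prop w(s)}, using that the current case~\eqref{n=5 k=4} already supplies $\tau(X) = 2q - 1$ and $\nu = 0$.

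First, I would invoke Proposition~\ref{card V cap D = 1}: the sole vertex $u \in V \cap D$ must be of the form $u = x^2$ for some $x \in V \cap P$, with $N_G(u) = \{x\}$. This forces $|V \cap P| = 4$, and Lemma~\ref{P and V} then gives $|P| \ge 5$.

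Next I would bound $|X \cap D|$ from above. Proposition~\ref{bound 10} gives $|E(G)| \le 10$, and inspection of its proof shows that $|E(G)| = 10$ can occur only for $G = K_5$ (the $\lambda(H)=0$ case). But $K_5$ is $4$-regular, so Corollary~\ref{V subset P} would force $V \subset P$, contradicting $x^2 \in V \cap D$. Hence $|E(G)| \le 9$, and therefore $|X \cap D| \le 9$. (The range $|X \cap D| \le 7$ is already handled by Proposition~\ref{X cap D le 7}.)

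Finally, since $\nu = 0$ and $k = 4 \ge 1$, every vertex-maximal matching consists exclusively of weak edges and is nonempty; thus $E_0(G) \ne \emptyset$ and $|\wt(E_0(G))| \ge 1$, so Corollary~\ref{rho} yields $\rho \ge 1$. Plugging $|P| \ge 5$, $\tau(X) = 2q-1$, $|X \cap D| \le 9$ and $\rho \ge 1$ into Corollary~\ref{prop w(s)} gives
$$W(S) \ge 5(2q-1) - 9q + \rho = q - 5 + \rho \ge q - 4 \ge 0,$$
since $q \ge 4$ by the standing hypothesis of Section~\ref{section main}. The main obstacle I foresee is the second step, i.e.\ excluding the extremal case $|E(G)|=10$ via the classification inside the proof of Proposition~\ref{bound 10}; everything else is routine arithmetic.
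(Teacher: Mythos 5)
Your proof is correct and follows essentially the same route as the paper: $|P|\ge 5$ from $|V\cap P|=4$, reduction to $|X\cap D|\le 9$ (the paper simply cites Proposition~\ref{X cap D le 7} for $|X\cap D|\ge 10$ rather than re-inspecting the proof of Proposition~\ref{bound 10}, but the content is the same), and $\rho\ge 1$ from $\nu=0$ via Corollary~\ref{rho}, giving $W(S)\ge q-5+\rho\ge 0$. The appeal to Proposition~\ref{card V cap D = 1} is harmless but not needed for any of these steps.
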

\begin{proof} The hypotheses imply $|V \cap P|=4$, whence $|P| \ge 5$. Moreover, by the previous result, we may assume $|X \cap D| \le 9$. Then
\begin{eqnarray*}
W(S) & \ge & |P|\tau(X)-|X \cap D|q + \rho \\
& \ge & 5(2q-1)-9q + \rho \\
& = & q-5+\rho.
\end{eqnarray*}
Since $\nu=0$, and since there is a vertex-maximal matching touching $4$ vertices, it follows that there at least two active weak edges. Corollary~\ref{rho} then implies $\rho \ge 1$, and we conclude $W(S) \ge 0$ as desired.
\end{proof}

\medskip
It remains to treat the case $|V \cap D| \ge 2$ and $|X \cap D| \in \{8,9\}$. From here, we again proceed by descending values of $\lambda(G)$ from $4$ to $0$.
The case $\lambda=4$ is impossible in the present context.

\smallskip
\underline{Assume $\lambda = 3$}. Let $x_1,x_2,x_3$ be the loopy vertices and $y_1,y_2$ the nonloopy ones. We have seen that $|E| \le 8$ in this case. But since $|X \cap D| \ge 8$, it follows that $|X \cap D|=|E|=8$. This only way to achieve this, up to isomorphism, is that $G$ contains $LK_3$ on the vertices $x_1,x_2,x_3$ with $y_1,y_2$ linked to $x_1$. (See corresponding picture in the proof of Proposition~\ref{bound 10}.) We have $x_1 \in P$ since it is of highest degree. Since $|V \cap D| \ge 2$ by assumption, it follows from Proposition~\ref{leaf} that $V \cap D$ consists of leaves, each of the form $x^2$ with $x \in V \cap P$ as unique neighbor. Therefore $V \cap D = \{y_1,y_2\}$, and since both have $x_1$ as unique neighbor, this implies $y_1=y_2=x_1^2$, an absurdity since $y_1,y_2$ are distinct. Hence the present case, namely $n=5$, $k=4$, $|X \cap D| \ge 8$, $|V \cap D| \ge 2$ and $\lambda = 3$, cannot occur.

\smallskip
\underline{Assume $\lambda = 2$}. Let $x_1,x_2$ be the loopy vertices and $y_1,y_2,y_3$ the nonloopy ones. We have seen that $|E| \le 9$ in this case. If $|E|=9$, then $G$ is the join between $LK_2$ and $\overline{K_3}$, i.e.
$$
G = LK_2 \vee \overline{K_3}
$$
as pictured here:
\begin{center}
\begin{tikzpicture}[scale=0.9]
\coordinate (A1) at (4,2.5); 
\coordinate (A2) at (6,2.5); 

\coordinate (B1) at (3,0);
\coordinate (B2) at (5,0);
\coordinate (B3) at (7,0);

\draw [fill] (A1) circle (0.08) node[left] {$x_1$};
\draw [fill] (A2) circle (0.08) node[right] {$x_2$};
\draw [fill] (B1) circle (0.08) node[left] {$y_1$};
\draw [fill] (B2) circle (0.08) node[left] {$y_2$};
\draw [fill] (B3) circle (0.08) node[left] {$y_3$};

\draw[thick ] (A1) -- (A2);

\draw[thick ] (A1) -- (B1);
\draw[thick ] (A1) -- (B2);
\draw[thick ] (A1) -- (B3);

\draw[thick ] (A2) -- (B1);
\draw[thick ] (A2) -- (B2);
\draw[thick ] (A2) -- (B3); 

\draw[thick ] (A1) to [out=180-20,in=180] ++(0,1.25) to [out=0,in=20] ++(0,-1.25);
\draw[thick ] (A2) to [out=180-20,in=180] ++(0,1.25) to [out=0,in=20] ++(0,-1.25);
\end{tikzpicture}
\end{center}
Incidentally, note that this graph realizes the first occurrences of $W_0(S) < 0$. (See \cite{EF} for more details.) We further assume $|V \cap D| \ge 2$. We claim that 
\begin{equation}\label{claim}
V \cap D =\{y_1,y_2,y_3\}=\{x_1^2,x_1x_2,x_2^2\}.
\end{equation}
Indeed, by Corollary~\ref{V subset P}, the $x_i$ belong to  $V \cap P$ since they have maximal degree $5$, and the $y_i$ constitute an antichain for divisibility since they all have degree $2$. Hence the vertices in $V \cap D$ are monomials in $x_1,x_2$. By symmetry, we may assume $y_1 \in V \cap D$ and $y_1=x_1u$ for some $u \in V$. Since $\{x_1,y_1\} \in E$,  it follows that $x_1^2u \in X$. Hence $x_1^2 \in V \cap D$. Up to symmetry again, we may assume $y_1=x_1^2$. Since $\{x_2,y_1\} \in E$, we have $x_1^2x_2 \in X$, whence $x_1x_2 \in V \cap D$. Say $y_2=x_1x_2$. Since $\{x_2,y_2\} \in E$, it follows that $x_1x_2^2 \in X$. Hence $x_2^2 \in V \cap D$, implying $y_3=x_2^2$. This proves~\eqref{claim}, as claimed. Now, even though $|E|=9$ here, Proposition~\ref{prop fibers} implies $|X \cap D| \le 7$ since two pairs of edges have the same weight, namely 
\begin{eqnarray*}
\wt(\{x_1,x_1x_2\}) & = & \wt(\{x_1^2,x_2\}), \\
\wt(\{x_2,x_1x_2\}) & = & \wt(\{x_2^2,x_1\}).
 \end{eqnarray*}
Therefore this case is settled by Proposition~\ref{X cap D le 7}.

Assume now $|E|=|X \cap D|=8$, and still $|V \cap D| \ge 2$ of course. Then $G$ is obtained by suppressing an edge from the graph $LK_2 \vee \overline{K_3}$ above. By Proposition~\ref{leaf}, the vertices in $V \cap D$ must all be of degree one. However, in $G$, at most one vertex has  degree one as easily seen. Therefore this case is impossible.

\smallskip
\underline{Assume $\lambda = 1$}. Then $|E|=|X \cap D|=8$ again. As seen above, $G$ is the join $LK_1 \vee T$ of a loop $LK_1$ with a claw $T$. However, this case is again made impossible by Proposition~\ref{leaf} since there are no vertices of degree $1$.

\smallskip
\underline{Assume $\lambda = 0$}. Again, we may assume $|X \cap D| \in \{8,9\}$ and $|V \cap D| \ge 2$. We have $G \subseteq K_5$ since it has $5$ vertices and no loops. 

The case $G=K_5$ is impossible, for it would imply $V \subset P$, contrary to our hypotheses. Hence $|E| \in \{8,9\}$ and $G$ is obtained by removing $1$ or $2$ edges from $K_5$. 

If $|X \cap D|=|E|$, then Proposition~\ref{leaf} implies that the vertices in $V \cap D$ have degree $1$. But $G$ has no vertices of degree less than $2$, so this case is impossible. 

It remains to consider the case $|X \cap D|=8$, $|E|=9$. Thus $G$ is $K_5$ minus one edge, i.e. $G = K_3 \vee \overline{K_2}$. Its degree distribution is $(3,3,4,4,4)$. Hence $|V \cap P|=3$, $|V \cap D|=2$. Set $V \cap P=\{x_1,x_2,x_3\}$, $V \cap D=\{y_1,y_2\}$. Then $y_1,y_2$ are monomials in $x_1,x_2,x_3$. Assume $y_1$ is divisible by $x_j$ for some $j$, so $y_1=x_jv$ for some $v \in V$. Since $y_1,x_j$ are neighbors, it follows that $x_jy_1 \in X$, whence $x_j^2v \in X$, whence $x_j^2 \in X$. Therefore $x_j$ is a loopy vertex, in contradiction with the hypothesis $\lambda = 0$. Hence this case is impossible as well.

This completes the verification of Wilf's conjecture in case $k=4$, $n=5$ and $\tau(X) \le 2q-1$. 

\subsubsection{The subcase $k=4, n=4$}

Throughout this section, the \emph{current case} is given by the following hypotheses:
\begin{equation}\label{n=4 k=4}
n=|V(G)|=4, \,\, k=\vm(G)=4, \,\, \tau(X) \le 2q-1.
\end{equation}
This implies
\begin{equation}
\tau(X) \ge 2(q-1)+\nu/2
\end{equation}
and $\nu \le 2$ in this context, as seen above. We have $|E| \le 10$, the number of edges of $LK_4$.

\begin{proposition}\label{k=4 n=4} In the current case~\eqref{n=4 k=4}, if either $|X \cap D| \le 6$ or $V \subset P$, then $S$ satisfies Wilf's conjecture. 
\end{proposition}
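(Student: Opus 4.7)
The proposition splits into two disjoint hypotheses, and I would handle them separately, using as the main tool the formula from Corollary~\ref{prop w(s)}:
\[
W(S) = |P|\tau(X) - |X \cap D|q + \rho,
\]
together with the lower bound $\tau(X) \ge 2(q-1)+\nu/2$ coming from Theorem~\ref{thm formula} specialized to $n=k=4$.

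For the first hypothesis $|X \cap D| \le 6$, the argument is a direct numerical estimate, essentially identical in spirit to the opening subcases of Proposition~\ref{X cap D le 7}. Using only $\tau(X) \ge 2(q-1)$ and the standing assumption $|P| \ge 4$, one gets
\[
W(S) \;\ge\; 4\cdot 2(q-1) - 6q + \rho \;=\; 2(q-4) + \rho \;\ge\; \rho,
\]
since $q \ge 4$. This closes the first case without any graph-theoretic input.

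For the second hypothesis $V \subset P$, the key upgrade is that Lemma~\ref{P and V} now gives $|P| \ge |V|+1 = 5$. Plugging this in together with the Theorem~\ref{thm formula} bound yields
\[
W(S) \;\ge\; 5\bigl(2(q-1)+\nu/2\bigr) - |X\cap D|\,q + \rho \;=\; (10-|X\cap D|)q \;+\; 5\nu/2 \;+\; \rho \;-\; 10.
\]
Since $|X\cap D| \le |E(G)| \le 10$ (the edge count of $LK_4$), the $q$-coefficient is nonnegative, and the argument divides according to $|X\cap D|$. For $|X\cap D| \le 7$, the contribution $(10-|X\cap D|)q \ge 3\cdot 4 = 12$ alone outweighs the $-10$ and we are done. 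For $|X\cap D| \in \{8,9\}$ one uses $q\ge 4$ to leave a deficit of at most $2$, then invokes Corollary~\ref{rho} (applied to the weak active edges, which exist in abundance when $\nu$ is small since every vertex-maximal matching covers all four vertices) to produce the missing units of $\rho$.

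The main obstacle is the extremal subcase $|X\cap D|=10$. This forces $|E(G)|=10$, so by Proposition~\ref{bound 10}-type reasoning $G=LK_4$ and every edge-weight is distinct; the residual inequality reads $W(S) \ge -10 + 5\nu/2 + \rho$. Here I would argue via the rich matching structure of $LK_4$: it contains three disjoint perfect matchings of true edges as well as perfect matchings formed by two loops plus a true edge, so the active edge set $E^+$ is essentially all of $E(G)$. If $\nu=0$, then all of these many active edges are weak, and Corollary~\ref{rho} gives $\rho$ large enough (in fact $\rho \ge |\wt(E_0)|$ which will comfortably exceed $10$); if $\nu \ge 1$ the term $5\nu/2$ takes over with only mild demands on $\rho$. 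Controlling this endgame, in particular confirming that the weak weights in $LK_4$ really are all distinct (using the adjacency/loop distinctness lemma), is the step I expect to require the most care.
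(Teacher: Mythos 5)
Your proposal follows essentially the same route as the paper: the same split into the two hypotheses, the bound $W(S)=|P|\tau(X)-|X\cap D|q+\rho$ with $\tau(X)\ge 2(q-1)+\nu/2$ and $|P|\ge 5$ when $V\subset P$, a descent on $|X\cap D|$ from $10$ down to $7$, and Corollary~\ref{rho} applied to the weak active edges to supply the missing $\rho$. Two arithmetic points in your sketch are off, though neither derails the method: for $|X\cap D|=9$ the residual deficit is $10-q=6$, not $2$, so you genuinely need the count that all $9$ (or $10$) edges are active and, when $\nu$ is small, almost all are weak with distinct weights, giving $\rho\ge 9$, $8$, or $6$ according as $\nu=0,1,2$; and in the extremal case $|X\cap D|=10$ with $\nu\ge 1$, the term $5\nu/2$ does \emph{not} take over, since the current case forces $\nu\le 2$, so $5\nu/2\le 5$ and one still must extract $\rho\ge 7$ (for $\nu=2$) or $\rho\ge 9$ (for $\nu=1$) by observing that at most $2\nu-1$ edges of $LK_4$ can be normal when only $\nu$ vertices are touched by normal active edges. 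The tools you cite (all edges of $LK_4$ active, bijectivity of $\wt$ when $|X\cap D|=|E|$, Corollary~\ref{rho}) are exactly what the paper uses to finish these counts.
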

\begin{proof} As above, we freely assume $|P|,q \ge 4$. 

\smallskip
\noindent
$\bullet$ Assume $|X \cap D| \le 6$. Then
\begin{eqnarray*}
W(S) & \ge & |P|(2(q-1)+\nu/2)-6q+\rho \\
& \ge & 8(q-1)+2\nu-6q+\rho \\
& = & 2q-8+2\nu +\rho \\
& \ge & 2\nu +\rho
\end{eqnarray*} 
and we are done.

\smallskip
\noindent
$\bullet$ Assume $V \subset P$. Then $|P| \ge 5$ here. Thus
\begin{eqnarray*}
W(S) & \ge & 5\big(2(q-1)+\nu/2\big) - |X\cap D|q + \rho \\
& = & (10-|X \cap D|)q+5\nu/2-10+ \rho.
\end{eqnarray*}
We now examine separately the cases $|X \cap D|=10, 9, 8, 7$.

\smallskip
$\circ$ If $|X \cap D|=10$, then $|E|=10$ and $G=LK_4$. Then
$$
W(S) \ge 5\nu/2-10+\rho.
$$
Since $G=LK_4$, all $10$ edges are active.

-- If $\nu=0$, then all edges are weak, i.e. $E=E_0^+$. We have $\rho \ge \wt(E_0)$, and since $\wt$ is a bijection here, this implies $\rho \ge 10.$ Hence $W(S) \ge 0$ if $\nu=0$. 

-- If $\nu=1$, then exactly one vertex is touched by a normal edge. Hence all edges are weak except one loop. It follows that $\rho \ge 9$, whence $W(S) \ge 5/2-10+9$, implying $W(S) \ge 2$.

-- Finally, if $\nu=2$, then at most $2$ vertices are touched by a normal edge. Hence at most $3$ edges are normal, and so at least $7$ edges are weak. It follows that $\rho \ge 7$. Hence $W(S) \ge 5-10+7=2$. This completes the case $|X \cap D|=10$.

\smallskip
$\circ$ If $|X \cap D|=9$, then $W(S) \ge q-10+5\nu/2+\rho$. Then here also, each edge is active. 

-- If $\nu=0$, then all edges are weak, hence $\rho \ge 9$. Thus $W(S) \ge -6+9=3$.

-- If $\nu=1$, then exactly one loop is normal. Hence there are at least $8$ weak active edges, so that $\rho \ge 8$. Thus $W(S) \ge -6+5/2 +8$, implying $W(S) \ge 5$.

-- Finally, if $\nu=2$, then at most $2$ vertices are touched by normal edges, hence at most $3$ edges are normal. Hence there are at least $6$ active weak edges, implying $\rho \ge 6$. Hence $W(S) \ge 5$ and we are done for the case $V \subset P$, $|X \cap D| = 9$.

\smallskip
$\circ$ If $|X \cap D|=8$, then $W(S) \ge 2q-10+5\nu/2+\rho \ge -2+5\nu/2+\rho$. Then $G$ is $LK_4$ with at most $2$ missing edges. Then, as easily seen by examining the various possibilities for $G$, it is straightforward to check that $G$ contains at least $7$ active edges in each case. 

-- If $\nu = 0$, then the above implies $\rho \ge 7$, and so $W(S) \ge -2+\rho \ge 5$.
 
-- If $\nu \ge 1$, then $W(S) \ge -2+5\nu/2+\rho \ge 1+\rho$ and we are done.

\smallskip
$\circ$ If $|X \cap D|=7$, then $W(S) \ge 3q-10+5\nu/2+\rho \ge 2+5\nu/2+\rho$ and we are done. This completes the proof of the proposition.
\end{proof}

Having settled the case $|V \cap D|=0$, we now tackle the case $|V \cap D|=1$.

\begin{proposition}\label{k=4 n=4 V cap D = 1} In the current case~\eqref{n=4 k=4}, if $|V \cap D| =1$ then $S$ satisfies Wilf's conjecture. 
\end{proposition}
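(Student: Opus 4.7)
The plan is to pin down $G(S)$ completely, following the same strategy as Proposition~\ref{k=4 n=4}. By Proposition~\ref{card V cap D = 1}, the sole element of $V \cap D$ is $u = x^2$ for some $x \in V \cap P$, and $N_G(u) = \{x\}$; in particular $u$ is not loopy (as $u \ne x$) while $x$ is loopy (as $x^2 = u \in X$). Writing $V \cap P = \{x, x_1, x_2\}$, Lemma~\ref{P and V} gives $|P| \ge 4$, and Proposition~\ref{k=4 n=4} lets us assume $|X \cap D| \ge 7$.

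The first step is to determine $G$ exactly. The only candidate edges in $G$ are the three loops at $x, x_1, x_2$, the three true edges among $V \cap P$, and the edge $\{x, u\}$, which is the only edge incident to $u$. Thus $|E(G)| \le 7$, and combining with $|X \cap D| \le |E(G)|$ and our standing assumption yields $|E(G)| = |X \cap D| = 7$, so every candidate edge is an edge of $G$. This equality also forces the weight map $\wt$ to be a bijection onto $X \cap D$ via Proposition~\ref{prop fibers}, whence Corollary~\ref{rho} gives $\rho \ge |E_0(G) \cap E^+|$.

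The second step is to identify $E^+$. Any vertex-maximal matching must cover $u$ and so must contain $\{x,u\}$, using up $x$; the remaining vertices $x_1, x_2$ can then be covered either by the edge $\{x_1, x_2\}$ or by the two loops at $x_1, x_2$. Thus the only vertex-maximal matchings are
$$
M_A \ = \ \bigl\{\{x,u\},\ \{x_1,x_2\}\bigr\}, \qquad M_B \ = \ \bigl\{\{x,u\},\ \{x_1,x_1\},\ \{x_2,x_2\}\bigr\},
$$
and $E^+$ consists precisely of the four edges appearing in $M_A \cup M_B$. Letting $a, b, c, d \in \{0,1\}$ indicate whether $\{x,u\}$, $\{x_1,x_2\}$, the loop at $x_1$, and the loop at $x_2$ are normal respectively, a direct inspection gives
$$
\nu_A = 2a + 2b, \qquad \nu_B = 2a + c + d, \qquad |E_0(G) \cap E^+| = 4 - (a+b+c+d).
$$
Using $2\nu \ge \nu_A + \nu_B$ together with the bound on $\rho$, we obtain
$$
2\nu + \rho \ \ge \ (4a+2b+c+d) + \bigl(4 - (a+b+c+d)\bigr) \ = \ 3a + b + 4 \ \ge \ 4.
$$

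To conclude, Corollary~\ref{prop w(s)}, Theorem~\ref{thm formula} and $|P| \ge 4$ yield
$$
W(S) \ \ge \ 4\bigl(2(q-1) + \nu/2\bigr) - 7q + \rho \ = \ q - 8 + 2\nu + \rho \ \ge \ q - 4 \ \ge \ 0,
$$
using the standing hypothesis $q \ge 4$. The only delicate point is the exhaustive listing of vertex-maximal matchings; once that is in hand, everything else is routine bookkeeping, and the listing itself is forced by the fact that $u$ is a degree-one vertex.
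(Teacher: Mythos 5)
Your proposal is correct and follows essentially the same route as the paper: reduce to $|E|=|X\cap D|=7$ via Proposition~\ref{k=4 n=4}, identify $G$ as $LK_3$ plus the pendant edge $\{x,x^2\}$, observe that the four active edges are exactly those of $M_A\cup M_B$, and combine the bound $\rho\ge|E_0^+|$ (via bijectivity of $\wt$) with $W(S)\ge q-8+2\nu+\rho$. The only difference is cosmetic: your inequality $2\nu\ge\nu_A+\nu_B$ packages into one algebraic step what the paper handles by a short case split on $\nu=0,1,\ge 2$.
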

\begin{proof} Set $V \cap D = \{u\}$. It follows from Proposition~\ref{card V cap D = 1} that $u=x^2$ with $x \in P$ as its sole neighbor. Hence $u$ is a nonloopy vertex and $\deg(u)=1$. The latter implies $|E| \le 7$. Since $|X \cap D| \le |E|$ and the case $|X \cap D| \le 6$ has already been settled, it remains to examine the case $|X \cap D| = |E|=7$. Therefore $G$ consists of $LK_3$ with $x$ as one of the vertices, to which a pendant edge is attached with endvertex $u=x^2$:

\medskip
\begin{center}
\begin{tikzpicture}[scale=0.9]

\coordinate (A1) at (4,2.5); 
\coordinate (A2) at (4+1.8,2.5); 
\coordinate (A3) at (4+2*1.8,2.5); 

\coordinate (B2) at (4,0);

\draw [fill] (A1) circle (0.08) node [below left] {$x$};
\draw [fill] (A2) circle (0.08);
\draw [fill] (A3) circle (0.08);

\draw [fill] (B2) circle (0.08) node [left] {$x^2$};

\draw[ ] (A1) -- (A2);
\draw[very thick ] (A2) -- (A3);

\draw[very thick ] (A1) -- (B2);

\draw[ ] (A1) to [out=180,in=180] ++(0,1.25) to [out=0,in=0] ++(0,-1.25);
\draw[very thick ] (A2) to [out=180,in=180] ++(0,1.25) to [out=0,in=0] ++(0,-1.25);
\draw[very thick ] (A3) to [out=180,in=180] ++(0,1.25) to [out=0,in=0] ++(0,-1.25);

\draw[ ] (A1) to [out=270+50,in=270-50] (A3);

\end{tikzpicture}
\end{center}
\medskip
Note that $G$ has exactly $4$ active edges, the thicker ones in the picture. We have 
$$W(S) \ge 8(q-1)+2\nu-7q+\rho=q-8+2\nu+\rho.$$

-- If $\nu=0$ then all active edges of $G$ are weak. Since $\wt$ is a bijection here, it follows that $\rho \ge 4$. Hence $W(S) \ge 0$, as desired.

-- If $\nu=1$ then all active edges are weak, except for one normal loop. It follows that $\rho \ge 3$ and that $W(S) \ge 1$.

-- If $\nu \ge 2$ then $W(S) \ge \rho$ since $q \ge 4$.
\end{proof}

\medskip
It remains to consider the cases $|V\cap D| = 2,3$.

\begin{proposition}\label{k=4 n=4 V cap D = 1} In the current case~\eqref{n=4 k=4}, if $|V \cap D| \ge 2$ then $S$ satisfies Wilf's conjecture. 
\end{proposition}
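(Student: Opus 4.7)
The plan is to exhaust the possibilities for the structure of $V$ under $n=|V|=4$, $k=\vm(G)=4$, and $|V \cap D|\ge 2$, and in each case verify $|X \cap D|\le 6$ so that Proposition~\ref{k=4 n=4} completes the proof. Since $V$ is a downset in $S^*$ by Lemma~\ref{V and X cap D}, every vertex admits a chain of proper factorizations terminating in a primitive, which also lies in $V$; hence $|V \cap P|\ge 1$, which combined with $|V|=4$ and $|V \cap D|\ge 2$ yields $|V \cap D|\in\{2,3\}$. If $|V \cap D|=3$ then $V \cap P=\{x\}$ and, since the elements of $V \cap D$ must be powers of $x$ by the downset property, we get $V=\{x,x^2,x^3,x^4\}$. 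If $|V \cap D|=2$ then $V \cap P=\{x_1,x_2\}$ and a short case analysis using the downset property shows that, up to swapping $x_1$ and $x_2$, $V \cap D$ is one of (a) $\{x_1^2,x_2^2\}$, (b) $\{x_1^2,x_1x_2\}$, or (c) $\{x_1^2,x_1^3\}$.

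In each configuration, the plan is to list the ten potential edges of $G$ on the four vertices of $V$ and determine which ones can actually occur, by combining the downset property of $X$ (Lemma~\ref{lem downset}) with the constraint that the product of every element of $S^*\setminus V$ with anything in $X$ must stay out of $X$. The key leverage is that whenever a candidate edge $\{u,v\}$ would force $uv\in X$ and thereby allow some further $z\in S^*\setminus V$ to acquire a neighbor in $X$, that $z$ would join $V$ and overflow $|V|=4$, so the candidate edge must be suppressed. Combined with the weight-coincidences of Proposition~\ref{prop fibers} (reinforced in cases (b), (c) and the $|V \cap D|=3$ case by Proposition~\ref{large diff}, which applies there because $V \cap D$ then contains a non-primitive-square element), this analysis yields $|X \cap D|\le 6$ in each subcase.

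The delicate case is (a), where Proposition~\ref{large diff} does not apply since both elements of $V \cap D$ are primitive squares. Here the plan is to successively eliminate every potential edge outside of the loops at $x_1,x_2$, the edge $\{x_1,x_2\}$, and the two edges $\{x_1,x_1^2\}$, $\{x_2,x_2^2\}$: indeed $\{x_1^2,x_2^2\}\in E$ would force $x_1^2x_2^2\in X$, hence by downset $x_1^2x_2,x_1x_2\in X$, whence $x_1x_2$ would acquire the neighbor $x_1$ and thus belong to $V$, a contradiction; $\{x_1,x_2^2\}\in E$ would force $x_1x_2^2\in X$ and give $x_2^2$ the neighbor $x_1$, violating $x_2^2\notin V$; and a loop at $x_1^2$ would require $x_1^4\in X$, thereby placing $x_1^3$ in $V$ via the neighbor $x_1$. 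Symmetric arguments eliminate $\{x_2,x_1^2\}$ and the loop at $x_2^2$. The remaining five edges have pairwise distinct weights $x_1^2,x_2^2,x_1x_2,x_1^3,x_2^3$, giving $|X \cap D|\le 5$, so Proposition~\ref{k=4 n=4} closes the case and thus the proof of Theorem~\ref{thm main}.
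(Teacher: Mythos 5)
Your proof is correct and follows essentially the same route as the paper: enumerate the possible downset shapes of $V$ (namely $\{x_1,x_2,x_1^2,x_2^2\}$, $\{x_1,x_2,x_1^2,x_1x_2\}$, $\{x_1,x_2,x_1^2,x_1^3\}$ and $\{x,x^2,x^3,x^4\}$), use the fact that every proper factor of an element of $X\cap D$ lies in $V$ to force $|X\cap D|\le 6$, and close with Proposition~\ref{k=4 n=4}; the paper simply lists the admissible elements of $X\cap D$ directly rather than filtering the ten candidate edges. One sub-argument in your case (a) is misstated: there $x_2^2$ \emph{is} a vertex, so ``giving $x_2^2$ the neighbor $x_1$'' is no contradiction — the correct reason $\{x_1,x_2^2\}$ is impossible is that $x_1x_2^2\in X$ would force its proper factor $x_1x_2$ (distinct from all four vertices by cancellation) into $V$, overflowing $|V|=4$; with that repair the argument goes through.
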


\begin{proof} Assume first $|V \cap D| = 2$. Set $V \cap P = \{x_1,x_2\}$ and $V \cap D = \{u_1,u_2\}$. Thus $u_1,u_2$ are monomials in $x_1,x_2$. We claim that $|X \cap D| \le 6$. Indeed, as $V$ is a downset, the only possibilities up to symmetry are 
$$
\{u_1,u_2\} \,=\, \{x_1^2,x_1x_2\}, \, \{x_1^2,x_2^2\}, \, \{x_1^3,x_1^2\}.$$ 
Now, since all proper factors of the elements of $X \cap D$ are vertices by Lemma~\ref{V and X cap D}, the corresponding only possibilities for $X \cap D$ are 
$$
\{x_1^3, x_1^2x_2, x_1^2, x_1x_2, x_2^2\}, \, \{x_1^3, x_2^3, x_1^2, x_1x_2, x_2^2\}, \, \{x_1^4, x_1^3, x_1^2, x_1^2, x_1x_2, x_2^2\},
$$
respectively, as is straightforward to check. For instance, if $\{u_1,u_2\}=\{x_1^2,x_1x_2\}$, then $x_1x_2^2$ cannot belong to $X \cap D$ since its proper factor $x_2^2$ is not in $V$. This concludes the proof of the claim, and hence of the case $|V \cap D|=2$ by Proposition~\ref{k=4 n=4}.

\smallskip
Assume finally $|V \cap D|=3$. Set $V \cap P=\{x\}$. Then again, since $V$ is a downset and made of monomials in $x$, it follows that $V \cap D = \{x^2, x^3, x^4\}$. Therefore $X \cap D=\{x^2, x^3, x^4, x^5\}$ and we are done again.
\end{proof}

\medskip

This concludes our proof of Theorem~\ref{thm main}. We close this section with a straightforward consequence.

\begin{corollary}\label{cor 12} Wilf's conjecture holds for all numerical semigroups of multiplicity $m \le 12$.
\end{corollary}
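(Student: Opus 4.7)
The plan is to deduce Corollary~\ref{cor 12} immediately from Theorem~\ref{thm main} together with the previously established case $|P| \le 3$ of Wilf's conjecture recorded in~\cite{FGH}. Concretely, I would split on the embedding dimension $|P|$ of the numerical semigroup $S$, comparing it to the threshold $m/3$ coming from the hypothesis of the main theorem.

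First, suppose $|P| \ge m/3$. Then Theorem~\ref{thm main} applies directly and yields $W(S) \ge 0$, i.e.\ $S$ satisfies Wilf's conjecture.

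Next, suppose $|P| < m/3$. Since we are assuming $m \le 12$, this forces
\[
|P| \;<\; \frac{m}{3} \;\le\; \frac{12}{3} \;=\; 4,
\]
and as $|P|$ is a positive integer we conclude $|P| \le 3$. In that range Wilf's conjecture is known to hold by the result of Fr\"oberg--Gottlieb--H\"aggkvist~\cite{FGH} cited in the introduction, which covers all numerical semigroups of embedding dimension at most $3$. Combining the two cases exhausts every numerical semigroup with $m \le 12$, giving the corollary.

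There is no genuine obstacle here: the entire content of the corollary is the arithmetic observation that when $m \le 12$, the complement of the hypothesis $|P| \ge m/3$ is absorbed by the already-settled regime $|P| \le 3$. The only care needed is to notice that the strict inequality $|P| < m/3$ together with $m \le 12$ indeed gives $|P| \le 3$, so that no intermediate value of the embedding dimension escapes both the main theorem and the FGH result.
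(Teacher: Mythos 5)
Your argument is correct and is essentially the paper's own proof, merely phrased with the case split on $|P| \ge m/3$ versus $|P| < m/3$ instead of on $|P| \ge 4$ versus $|P| \le 3$; the two dichotomies coincide when $m \le 12$. Nothing further is needed.
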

\begin{proof} Let $S$ be a numerical semigroup of multiplicity $m \le 12$. If $|P| \le 3$ then $S$ satisfies Wilf's conjecture by~\cite{FGH}. If $|P| \ge 4$ then $|P| \ge m/3$ since $m \le 12$, and we conclude with Theorem~\ref{thm main}.
\end{proof}

\begin{remark}
Corollary~\ref{cor 12} has just been improved with a \emph{verification of Wilf's conjecture up to multiplicity $m \le 18$}, by computer calculations with a specially developed algorithm based on the Kunz polytope and polyhedral geometry~\cite{BGOW}.
\end{remark}

\section{Equivalence of numerical semigroups}\label{section equivalence}

In this section, we investigate the range of the map $S \mapsto G(S)$ and we briefly consider its fibers.

\subsection{Realizability}

Given any loopy graph $G$, is there a numerical semigroup $S$ such that $G(S)$ is isomorphic to $G$? The answer is given below.

We first recall a notation from~\cite{E}. If $x_1,\dots,x_n,t$ are positive integers, we denote by $\vs{x_1,\dots,x_n}_t$ the numerical semigroup defined as follows:
$$
\vs{x_1,\dots,x_n}_t = \vs{x_1,\dots,x_n} \cup [t, \infty[.
$$
This construction makes sense even if the $x_i$ are not globally coprime. Note that the conductor $c$ of $\vs{x_1,\dots,x_n}_t$ satisfies $c \le t$.

\begin{theorem} Let $G=(V,E)$ be a loopy graph. Then there exist infinitely many numerical semigroups $S$ such that $G(S)$ is isomorphic to $G$. 
\end{theorem}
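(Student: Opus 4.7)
The plan is to produce, for any loopy graph $G=(V,E)$ with $V=\{v_1,\dots,v_n\}$, an explicit family of numerical semigroups $S$ indexed by an integer parameter $m$, each satisfying $G(S)\cong G$; varying $m$ then yields infinitely many examples.

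I would first pick a large integer $m$ together with an injection $\iota\colon V\hookrightarrow\{1,\dots,\lfloor m/4\rfloor\}$ whose image is a strong Sidon set, meaning that every multiset sum $\sum_{v\in I}\iota(v)$ of bounded cardinality is a distinct positive integer less than $m$; an explicit choice is $\iota(v_i)=B^i$ with $B$ a fixed integer larger than any multiset cardinality that arises, and $m>B^{n+1}$. Using the $\langle\,\cdot\,\rangle_t$ construction of the paper, I would then set
\[
S\;=\;\Bigl\langle\, m,\ m+\textstyle\sum_{v\in I}\iota(v)\ \text{for every non-edge multiset } I \text{ of } V\,\Bigr\rangle_t,
\]
where a non-edge multiset means a multiset $I$ that is not a $2$-multiset encoding an edge of $G$ (loops included), and where $t$ is chosen in a window $(\beta m,\gamma m)$ with absolute constants $\beta<\gamma$. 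Only the generators with value less than $t$ need be listed, a finite collection.

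Next, using the Sidon property to forbid unexpected collisions, I would compute the Apéry element $w_r$ in each residue class $r\bmod m$: one gets $w_r=m+\iota(v_i)$ for $r=\iota(v_i)$; $w_r=m+\sum_{v\in I}\iota(v)$ for $r\equiv\sum_{v\in I}\iota(v)$ with $I$ a non-edge multiset (from the explicit generator); $w_r=2m+\iota(v_i)+\iota(v_j)$ for $r=\iota(v_i)+\iota(v_j)$ with $\{v_i,v_j\}\in E$ (no shorter combination of generators reaches this class); and $w_r\in[t,t+m)$ for every remaining class, hence $w_r\ge c$. Feeding this into the defining equivalence $\{w_r,w_s\}\in E(G(S))\iff w_r+w_s=w_{(r+s)\bmod m}$, the only equalities that arise are $2m+\iota(v_i)+\iota(v_j)=w_{\iota(v_i)+\iota(v_j)}$ for $\{v_i,v_j\}\in E$; all Apéry elements $w_r\ge c$ are isolated since $w_r+y\ge c+m$ forces $w_r+y\notin X$, and the non-edge multiset Apéry elements of height $1$ are isolated by the same height-versus-sum comparison. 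Hence the bijection $v_i\mapsto m+\iota(v_i)$ realizes $G\cong G(S)$.

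The main obstacle will be controlling the Apéry structure at higher-order sum classes so that cliques of $G$ do not leak spurious edges into $G(S)$: without the higher-order non-edge multiset generators, a triangle of $G$ could push a triple-sum Apéry element up to height $3$, spuriously making an edge-sum Apéry element a vertex of $G(S)$. Including all non-edge multiset generators, not merely the $2$-element ones, pulls the relevant Apéry elements down to height $1$ and kills those unwanted equalities, while the Sidon hypothesis on $\iota$ keeps the remaining bookkeeping coherent and the varying parameter $m$ supplies infinitely many pairwise non-isomorphic $S$.
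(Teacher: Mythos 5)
Your overall strategy coincides with the paper's: generators $m+\iota(v)$ for the vertices, extra generators $m+(\mathrm{sum})$ placed in exactly those pair-classes that must \emph{not} become edges, everything inside an $\langle\,\cdots\rangle_t$ semigroup, with a Sidon-type condition keeping the relevant residues apart. The paper simply takes $x_i\in[m/3,(m-1)/2[$ with pairwise distinct pair sums and $t=2m$; then $c\le 2m$ gives $X\subseteq[0,3m[$, so a sum of three or more generators (each at least $4m/3$) can never be an Ap\'ery element, the ``triangle leakage'' you worry about cannot occur, and only the non-edge $2$-multisets ever need extra generators.

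As written, however, your construction fails at a concrete point. You admit as generators $m+\sum_{v\in I}\iota(v)$ for \emph{every} non-edge multiset $I$ of value below $t$, with no bound on $|I|$, while your Sidon hypothesis (and the choice $\iota(v_i)=B^i$) only controls multisets of \emph{bounded} cardinality; these two clauses are incompatible. Concretely, for an edge $\{v_i,v_j\}$ of $G$ with $j\ge 2$, the multiset consisting of $v_i$ together with $B^{j-1}$ copies of $v_1$ has sum $B^i+B^{j-1}\cdot B=\iota(v_i)+\iota(v_j)$ and cardinality $1+B^{j-1}\neq 2$, so it is a ``non-edge multiset'' in your sense; its value $m+\iota(v_i)+\iota(v_j)$ is necessarily below $t$ (otherwise $m+\iota(v_i)+\iota(v_j)\in[t,\infty[\,\subseteq S$ would already kill the edge), so it appears on your generator list. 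That generator lies in the class of $\iota(v_i)+\iota(v_j)$ and is smaller than $2m+\iota(v_i)+\iota(v_j)$, so the latter is not an Ap\'ery element and the intended edge of $G(S)$ is destroyed. The repair is to restrict the generating multisets to cardinality at most $\lceil t/m\rceil$ (no longer factorization of an element of $X\subseteq[0,t+m[$ into elements of $S^*\subseteq[m,\infty[$ can occur) and to demand the Sidon property up to twice that cardinality --- or, more economically, to take $t=2m$ and keep only the non-edge $2$-multisets, which is exactly the paper's proof.
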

\begin{proof} Set $n=|V|$. Take $m$ sufficiently large, and choose any integer sequence $x_1,\dots,x_n$ satisfying the following two conditions:
\begin{itemize}
\item $m/3 \le x_1 < \dots < x_n < (m-1)/2$, \vspace{-0.2cm}
\item the $x_i+x_j$ are pairwise distinct. 
\end{itemize}
Then the $n+\binom{n+1}{2}$ elements of the set
$$\{x_1,\dots,x_n\} \cup \{x_i+x_j\ \mid 1 \le i \le j \le n\}$$
are \emph{pairwise distinct mod $m$}. This is because $x_i \in [m/3,(m-1)/2[$ and $x_i+x_j \in [2m/3,m-1[$ for all $i,j$. Let
$$
S_0 = \vs{m,m+x_1,\dots,m+x_n}_{2m}.
$$

The above directly implies $G(S_0)=LK_n$. To obtain $G$ itself, we need only  erase in $LK_n$ those edges not belonging to $G$. For each edge $\{m+x_i, m+x_j\}$ to be erased, it suffices to add to $S_0$ the new generator $m+x_i+x_j$. This will yield $S$ such that $G(S)=G$. Details are left as an exercise to the reader.
\end{proof}

For instance, here are realizations of the complete loopy graph $LK_n$ as $G(S)$ for infinitely many numerical semigroups $S$. For a subset $A$ in $\Z$ or $\Z/m\Z$, we denote $2A=A+A=\{a+b \mid a,b \in A\}$.
\begin{example} The graph $LK_3$ is realized by the numerical semigroup $$S=\vs{m,m+1,m+3,m+7}_{2m}$$ with the condition $2(m+7) \le 2m+(m-1)$, i.e. with $m \ge 15$. Setting $A=\{m+1,m+3,m+7\}$, and computing $A \cup 2A$ in $\Z/m\Z$, we have
$$
A \cup 2A \equiv \{1,3,7\} \sqcup \{2,4,8,6,10,14\} \bmod m.
$$
Since $m \ge 15$, these $9$ elements are nonzero and pairwise distinct mod $m$. Moreover, $2A \subseteq [c,c+m[=[2m,3m-1[$. Hence $G(S)$ is the loopy-complete triangle.
\end{example}

\begin{example} More generally, the graph $LK_n$ is realized by the numerical semigroup $$S=\vs{m,m+1,m+3, \dots, m+2^n-1}_{2m}$$ with the condition $2(m+2^n-1) \le 2m+(m-1)$, i.e. with $m \ge 2^{n+1}-1$. Setting $A=\{m+1,m+3,\dots m+2^n-1\}$, and computing $A \cup 2A$ in $\Z/m\Z$, we have
$$
A \cup 2A \equiv \{1,3,\dots,2^n-1\} \sqcup \{2,4,8, \dots, 2^{n+1}-2\} \bmod m.
$$
\end{example}

\subsection{Graph-equivalence}
We now briefly consider the fibers of the map $S \mapsto G(S)$.

\begin{definition} Let $S,S'$ be two numerical semigroups. We say that $S,S'$ are \emph{graph-equivalent} if their associated graphs $G(S), G(S')$ are isomorphic.
\end{definition}

For instance, the class of numerical semigroups $S$ such that $G(S)=\emptyset$ is well known. It coincides with the set of so-called \emph{maximal embedding dimension numerical semigroups}, i.e. those for which $e=m$, where $e=|P|$ is the embedding dimension and $m$ is the multiplicity. Indeed, we have 
$$
|P|=m \iff P = X \sqcup \{m\} \iff X \cap D=\emptyset,
$$
where $P,X$ are the sets of primitive and nonzero Apéry elements of $S$, respectively.

\bigskip
The following tables give, for all $1 \le g \le 20$, 
\begin{itemize}
\item the number $n_g$ of numerical semigroups of genus $g$, \vspace{-0.2cm}
\item the number $\gamma_g$ of \emph{equivalence classes} of numerical semigroups of genus $g$.
\end{itemize}

\medskip
\begin{tabular}{||c||c|c|c|c|c|c|c|c|c|c|c|c|c|c|}
\hline
$g$ & 1 & 2 & 3 & 4 & 5 & 6 & 7 & 8 & 9 & 10 & 11 & 12 \\
\hline \hline
$n_g$ & 1 & 2 & 4 & 7 & 12 & 23 & 39 & 67 & 118 & 204 & 343 & 592 \\
\hline
$\gamma_g$ & 1 & 1 & 2 & 3 & 4 & 6 & 11 & 15 & 27 & 41 & 66 & 115 \\
\hline
\end{tabular}
\bigskip

\begin{tabular}{||c||c|c|c|c|c|c|c|c|}
\hline
$g$ & 13 & 14 & 15 & 16 & 17 & 18 & 19 & 20 \\
\hline \hline
$n_g$ & 1001 & 1693 & 2857 & 4806 & 8045 & 13467 & 22464 & 37396  \\
\hline
$\gamma_g$ & 190 & 322 & 569 & 1014 & 1761 & 3107 & 5475 & 9621 \\
\hline
\end{tabular}

\bigskip

Those values of $\gamma_g$ were obtained using the function \texttt{IsomorphicGraphQ} in \emph{Mathematica 10}. Needless to say, it would be very interesting to determine the long-term behavior of the sequence $\gamma_g$. 

\smallskip
For instance, for $g=7$, the $39$ numerical semigroups of genus $7$ regroup into $\gamma_7=11$ equivalence classes. The eleven nonisomorphic loopy graphs arising this way are the following ones: the empty graph, the two loopy graphs with $1$ edge, the five loopy graphs with $2$ edges, and three more loopy graphs with $3$ edges, namely

\bigskip
\begin{tikzpicture}[scale=0.8]

\hskip0.1cm

\coordinate (A1) at (0,2); 
\coordinate (A2) at (0+1.8,2); 

\coordinate (B2) at (0,0);

\draw [fill] (A1) circle (0.08);
\draw [fill] (A2) circle (0.08);

\draw [fill] (B2) circle (0.08);

\draw[thick ] (A1) -- (B2);

\draw[thick ] (A1) to [out=180,in=180] ++(0,1.25) to [out=0,in=0] ++(0,-1.25);
\draw[thick ] (A2) to [out=180,in=180] ++(0,1.25) to [out=0,in=0] ++(0,-1.25);

\hskip4cm

\coordinate (A1) at (0,2); 

\coordinate (B2) at (2,2);
\coordinate (B3) at (4,2);

\draw [fill] (A1) circle (0.08);

\draw [fill] (B2) circle (0.08);
\draw [fill] (B3) circle (0.08);

\draw[thick ] (A1) -- (B2);
\draw[thick ] (B2) -- (B3);

\draw[thick ] (A1) to [out=180,in=180] ++(0,1.25) to [out=0,in=0] ++(0,-1.25);

\hskip6cm

\coordinate (A1) at (0,2); 

\coordinate (B1) at (-1,1);
\coordinate (B2) at (1,1);

\coordinate (C1) at (-1,0);
\coordinate (C2) at (1,0);

\draw [fill] (A1) circle (0.08);
\draw [fill] (B1) circle (0.08);
\draw [fill] (B2) circle (0.08);
\draw [fill] (C1) circle (0.08);
\draw [fill] (C2) circle (0.08);

\draw[thick ] (B1) -- (B2);
\draw[thick ] (C1) -- (C2);

\draw[thick ] (A1) to [out=180,in=180] ++(0,1.25) to [out=0,in=0] ++(0,-1.25);

\end{tikzpicture}

\bigskip

We conclude this paper with a question. Can one show \emph{a priori} that if a numerical semigroup $S$ satisfies Wilf's conjecture, then so do all equivalent numerical semigroups $S' \sim S$? For instance, the less dense $G(S)$ is, the easier one may expect checking Wilf's conjecture on $S$ will be. At any rate, the proofs in this paper show that the properties of the graphs $G(S)$ for the numerical semigroups $S$ under consideration play a central role towards this  endeavor.

\bigskip 
\textbf{Acknowledgments.} The author wishes to thank Manuel Delgado and Jean Fromentin for many useful discussions related to this work.

{\small

}

\medskip

\noindent
{\small
\textbf{Author's address:}

\noindent
Shalom Eliahou\textsuperscript{a,b}

\noindent
\textsuperscript{a}Univ. Littoral C\^ote d'Opale, EA 2597 - LMPA - Laboratoire de Math\'ematiques Pures et Appliqu\'ees Joseph Liouville, F-62228 Calais, France\\
\textsuperscript{b}CNRS, FR 2956, France\\
\textbf{e-mail:} eliahou@lmpa.univ-littoral.fr
}


\begin{thebibliography}{aaa}
\bibitem{Br08} \textsc{M. Bras-Amor\'os}, Fibonacci-like behavior of the number of numerical semigroups of a given genus,  Semigroup Forum 76  (2008) 379--384.
\bibitem{BGOW} \textsc{W. Bruns, P. Garcia-Sanchez, C. O'Neill, D. Wilburne}, Wilf's conjecture in fixed multiplicity,  Preprint (2019). arXiv:1903.04342 [math.CO].
\bibitem{cal} \textsc{CALCULCO}, a high performance computing platform supported by SCoSI/ULCO (Service COmmun du Syst\`eme d'Information de l'Universit\'e du Littoral C\^ote d'Opale).
\bibitem{CLZ} \textsc{G. Chartrand, L. Lesniak, P. Zhang}, Graphs \& digraphs. Sixth edition. Textbooks in Mathematics. CRC Press, Boca Raton, FL,  2016. xii+628 pp. ISBN: 978-1-4987-3576-6. 
\bibitem{D} \textsc{M. Delgado}, On a question of Eliahou and a conjecture of Wilf, Math. Z. 288 (2018) 595--627.
\bibitem{D2} \textsc{M. Delgado}, Conjecture of Wilf: a survey, Preprint (2019). arXiv:1902.03461 [math.CO].
\bibitem{D3} \textsc{M. Delgado}, Trimming the numerical semigroups tree to probe Wilf's conjecture to higher genus. In preparation.
\bibitem{DF} \textsc{M. Delgado and J. Fromentin}, Work in progress. 
\bibitem{DG} \textsc{M. Delgado, P.A. Garc\'{\i}a-S\'anchez and J. Morais}, ``Numericalsgps'': a GAP package on numerical semigroups. \url{http://www.gap-system.org/Packages/numericalsgps.html}
\bibitem{DM} \textsc{D. Dobbs and G. Matthews}, On a question of Wilf concerning numerical semigroups, in: Focus on Commutative Rings Research, Nova Sci. Publ., New York, 2006, pp. 193--202. 
\bibitem{E} {\sc S. Eliahou}, Wilf's conjecture and Macaulay's theorem, J. Eur. Math. Soc. 20 (2018) 2105--2129. DOI 10.4171/JEMS/807.
\bibitem{E2} {\sc S. Eliahou}, A graph-theoretic approach to Wilf's conjecture. Talk given at the 2017 Meeting of the Catalan, Spanish and Swedish Math Societies in Ume\r{a}. Slides available at \url{https://www.ugr.es/~semigrupos/Umea-2017/Eliahou-Umea-2017.pdf}.
\bibitem{EF} {\sc S. Eliahou and J. Fromentin}, Near-misses in Wilf's conjecture, Semigroup Forum 98 (2019) 285-298. DOI 10.1007/s00233-018-9926-5.
\bibitem{EF2} {\sc S. Eliahou and J. Fromentin}, Gapsets and numerical semigroups, J. Combin. Theory Ser. A (2020), to appear. DOI 10.1016/j.jcta.2019.105129.
\bibitem{FGH} \textsc{R. Fr\"oberg, C. Gottlieb and R. H\"aggkvist}, On numerical semigroups, Semigroup Forum 35 (1987) 63--83.
\bibitem{FH} \textsc{J. Fromentin and F. Hivert}, Exploring the tree of numerical semigroups, Math. Comp.  85 (2016) 2553--2568.
\bibitem{GMR} \textsc{P. A. Garc\'{\i}a-S\'anchez, D. Mar\'{i}n-Arag\'on and A. M. Robles-P\'erez}, The tree of numerical semigroups with low multiplicity, arXiv:1803.06879 [math.CO] (2018).
\bibitem{K1} \textsc{N. Kaplan}, Counting numerical semigroups by genus and some cases of a question of Wilf, J. Pure Appl. Algebra 216 (2012) 1016--1032.
\bibitem{MS} \textsc{A. Moscariello and A. Sammartano}, On a conjecture by Wilf about the Frobenius number, Math. Z. 280 (2015) 47--53. 
\bibitem{R} \textsc{J.L. Ram\'\i{}rez Alfons\'\i{}n}, The Diophantine Frobenius problem. Oxford Lecture Series in Mathematics and its Applications 30, Oxford University Press, Oxford, 2005.
\bibitem{RG}  \textsc{J.C. Rosales and P.A. Garc\'\i{}a-S\'anchez},  Numerical semigroups. Developments in Mathematics, 20. Springer, New York, 2009.
\bibitem{Sa} \textsc{A. Sammartano}, Numerical semigroups with large embedding dimension satisfy Wilf's conjecture, Semigroup Forum 85 (2012) 439--447.
\bibitem{Se} \textsc{E.S. Selmer}, On a linear Diophantine problem of Frobenius, J. Reine Angew. Math. 293/294 (1977) 1--17.
\bibitem{Sy} \textsc{J.J. Sylvester}, Mathematical questions with their solutions, Educational Times 41 (1884) 21.
\bibitem{W} \textsc{H. Wilf}, A circle-of-lights algorithm for the money-changing problem, Amer. Math. Monthly 85 (1978) 562--565.
\end{thebibliography}
\end{document}